\newcommand{\figref}[1]{\figurename~\ref{#1}}
\crefname{algocf}{Algorithm}{Algorithms}
\let\P\relax
\DeclareMathOperator{\P}{\mathcal P}
\DeclareMathOperator{\C}{\mathcal C}
\DeclareMathOperator{\R}{\mathbb R}
\renewcommand{\d}{\mathop{}\!\mathrm{d}}
\DeclareMathOperator{\D}{\mathop{}\!\mathrm{D}}
\DeclareMathOperator{\vol}{vol}
\DeclareMathOperator{\KL}{KL}
\DeclareMathOperator{\grad}{grad}
\newcommand{\tT}{\mathrm{T}}
\DeclareMathOperator{\NN}{\mathcal N}
\DeclareMathOperator{\Sym}{Sym}
\DeclareMathOperator{\N}{\mathbb N}
\DeclareMathOperator{\Lip}{Lip}
\DeclareMathOperator{\id}{id}
\DeclareMathOperator{\tr}{tr}
\let\eps\varepsilon
\DeclareMathOperator{\E}{\mathbb E} 
\let\phi\varphi
\DeclareMathOperator{\1}{\mathds{1}}
\DeclareMathOperator{\diag}{diag}
\let\vec\relax
\DeclareMathOperator{\vec}{vec}
\DeclareMathOperator{\weak}{\rightharpoonup}
\DeclareMathOperator*{\argmax}{\arg\!\max}
\newsavebox{\@brx}
\newcommand{\llangle}[1][]{\savebox{\@brx}{\(\m@th{#1\langle}\)}%
  \mathopen{\copy\@brx\kern-0.5\wd\@brx\usebox{\@brx}}}
\newcommand{\rrangle}[1][]{\savebox{\@brx}{\(\m@th{#1\rangle}\)}%
  \mathclose{\copy\@brx\kern-0.5\wd\@brx\usebox{\@brx}}}
\theoremstyle{plain}
\newtheorem{theorem}{Theorem}[section]
\newtheorem{lemma}{Lemma}[section]
\newtheorem{prop}{Proposition}[section]
\theoremstyle{definition}
\newtheorem{definition}{Definition}[section]
\newtheorem{example}{Example}[section]
\theoremstyle{remark}
\newtheorem{remark}{Remark}[section]
\title{Towards understanding Accelerated Stein Variational Gradient Flow - Analysis of Generalized Bilinear Kernels for Gaussian target distributions}
\author{Viktor Stein\footnote{Institute of Mathematics,
	   Technische Universität Berlin,
	   Stra{\ss}e des 17.\ Juni 136, 
	   10623 Berlin, Germany,
	   {\ttfamily stein@math.tu-berlin.de},
      \url{https://tu.berlin/imageanalysis}.
	} \and Wuchen Li\footnote{
Department of Mathematics,
University of South Carolina,
Columbia. 1523 Greene St, Columbia, SC 29225, USA, {\ttfamily wuchen@mailbox.sc.edu}.}
}
\date{\today}
\begin{document}

\maketitle

\begin{abstract}
    Stein variational gradient descent (SVGD) is a kernel-based and non-parametric particle method for sampling from a target distribution, such as in Bayesian inference and other machine learning tasks.
    Different from other particle methods, SVGD does not require estimating the score, which is the gradient of the log-density.
    However, in practice, SVGD can be slow compared to score-estimation-based sampling algorithms.    
    To design a fast and efficient high-dimensional sampling algorithm with the advantages of SVGD, we introduce accelerated SVGD (ASVGD), based on an accelerated gradient flow in a metric space of probability densities following Nesterov's method. 
    We then derive a momentum-based discrete-time sampling algorithm, which evolves a set of particles deterministically. To stabilize the particles' position update, we also include a Wasserstein metric regularization. This paper extends the conference version \cite{SL2025}. For the bilinear kernel and Gaussian target distributions, we study the kernel parameter and damping parameters with an optimal convergence rate of the proposed dynamics. This is achieved by analyzing the linearized accelerated gradient flows at the equilibrium.
    Interestingly, the optimal parameter is a constant, which does not depend on the covariance of the target distribution. 
    For the generalized kernel functions, such as the Gaussian kernel, numerical examples with varied target distributions demonstrate the effectiveness of ASVGD compared to SVGD and other popular sampling methods.
    Furthermore, we show that in the setting of Bayesian neural networks, ASVGD outperforms SVGD significantly in terms of log-likelihood and total iteration times.
\end{abstract}

\section{Introduction}

Sampling from complicated and high-dimensional target distributions $\pi \propto e^{- f}$ is essential in many applications, including Bayesian inverse problems \cite{LBADDP2022}, Bayesian neural networks \cite{N2012}, and generative models \cite{AFHHSS2021}.
Classical sampling methods for the case of known $f$ are based on Markov Chain Monte Carlo (MCMC) methods \cite{H1970}. A famous example is the unadjusted Langevin algorithm (ULA), which is a time discretization of the overdamped Langevin dynamics. 
Score-based models \cite{SWMG2015} take a different approach and reformulate the overdamped Langevin dynamics into an ODE, which
can be interpreted as a particle discretization of the gradient flow \cite{AGS08} of the Kullback-Leibler (KL) divergence on the {Wasserstein-2} metric space of probability measures with finite second moment \cite{JKO1998}, under suitable regularity assumptions.
One way to approximate the score function is to use kernel density estimation (KDE). In practice, however, the difficulty is that score approximation based on KDE is often sensitive to the chosen kernel function and bandwidth size.

On the other hand, Stein variational gradient descent (SVGD) \cite{LW2016} is a score-estimation-free, kernel-based, deterministic interacting particle algorithm.
SVGD is the time and space discretization of the gradient flow of the KL divergence with respect to a kernelized metric in probability space, namely the Stein metric.
Since its update equation includes not only a drift vector field, but also an interaction term, often only a few particles suffice to attain good exploration of the target distribution.
Furthermore, SVGD can converge faster than Wasserstein gradient flow in the Gaussian distributions, see \cite[Thm.~3.1]{LGBP2024}.
Unfortunately, SVGD can converge quite slowly in practice. 

The idea of \enquote{accelerating} first-order optimization algorithms has a long tradition started by Nesterov \cite{N1983}, and a broad scope; for example in \cite{NNVOV2022} it used to improve on neural ODEs and in \cite{DCFS2025} it is used to propose a new type of generative model, which instead of diffusion is based on its accelerated counterpart, the damped wave equation.
There are different perspectives on this acceleration idea; an approach we do not take in this paper is to augment the space the probability measures are defined on, to the twice-as-large phase space, and then asking for convergence of the first marginal \cite{CLTW2025}.
This is closely related to Hamiltonian MCMC methods \cite{DKPR1987,MGA2024}.

In this paper, we analyze ASVGD, introduced by the authors in \cite{SL2025}, an accelerated sampling algorithm based on SVGD and Nesterov acceleration.
Like SVGD, the ASVGD algorithm avoids score estimation by using integration by parts to shift the derivative of the log-density onto the kernel function.
We show that the probability density function from the algorithm will stay in Gaussian distributions when considering an initial Gaussian distribution and a Gaussian target, while using the generalized bilinear kernel $K(x, y) \coloneqq x^{\tT} A y + 1$.
We generalize the results from \cite{LGBP2024} from $x^{\tT} y + 1$ to $x^{\tT} A y + 1$ in the SVGD case, and also investigate the accelerated SVGD in the non-centered case.
Our main theoretical contribution is analyzing which choice of parameter matrix $A$ and which damping parameter $\alpha$ gives the asymptotically optimal convergence rate in discrete time.
The overall message is that in SVGD, the convergence rates depend on $A$, and the optimal $A$ depends on the covariance matrix of the Gaussian target distribution, $Q$. While, for ASVGD, in \cref{thm:optimal_damping} we obtain $A$-independent geometric convergence rates depending on $\sqrt{\kappa(Q)}$, where $\kappa(Q)$ is the condition number of the covariance matrix $Q$.
Moreover, the optimal damping parameter $\alpha$ does not depend on the smallest eigenvalue of the matrix $Q$. 
Numerically, we show that ASVGD can perform much better than SVGD in the high-dimensional Bayesian neural network task.

\paragraph{Outline}
The plan of the paper is as follows: in Section 2, we review accelerated gradient methods in $\R^d$ and their extension to the density manifold of smooth positive probability densities in Sections 3 and 4.
In \cref{subsec:newParticleAlgo}, we then review the novel particle algorithm from \cite{SL2025}, that is score-estimation free, and realizes accelerated gradient descent in the density manifold equipped with the Stein metric.
We provide the proof for the central \cite[Lemma~1]{SL2025}, whose proof was omitted there due to lack of space.
For the (accelerated) Stein gradient flow with the bilinear kernel $x^{\tT} A y + 1$, we prove in \cref{sec:Bilinear_Analysis} that if the target measure and the initial measure are Gaussian, then the dynamics remain Gaussian for all time and provide convergence rates.
We can thus describe the pullback of the Stein metric tensor onto the Gaussian submanifold.
For the accelerated Stein gradient flow, we also prove the preservation of Gaussians and for its linearization we compute the asymptotically optimal value of $A$ and the damping parameter $\alpha$.
Lastly, in \cref{sec:numerics} we first demonstrate the advantages of ASVGD over SVGD and other popular first and second-order sampling methods using 2D toy target measures.
We then apply ASVGD to the high-dimensional problem of Bayesian neural networks and demonstrate the superiority of ASVGD over SVGD.
Lastly, we conclude and point out further directions for future research.

\paragraph{Notation}
The letter $\N$ denotes the set of positive integers.
The symmetrization of a square matrix $A \in \R^{n \times n}$ is denoted by $\Sym(A) \coloneqq \frac{1}{2} A^{\tT} + \frac{1}{2} A \in \Sym(\R; d)$.
The symmetric positive definite matrices are denoted by $\Sym_+(\R; d)$.
If not stated otherwise, $\langle \cdot, \cdot \rangle$ denotes the inner product in Euclidean space.
The derivative with respect to the time parameter $t$ is denoted by a dot, e.g., $\dot{V}(t) \coloneqq \frac{\d}{\d t} V(t)$.
The distribution of a Gaussian random variable with mean $b \in \R^d$ and covariance matrix $Q \in \Sym_+(\R; d)$ is denoted by $\NN(b, Q)$.
The set of eigenvalues of a matrix $B$ is denoted by $\sigma(B)$ and its maximal and minimal elements by $\lambda_{\max}(B)$ and $\lambda_{\min}(B)$, respectively.
The real part of a complex number $z$ is denoted by $\Re(z)$.
If not stated otherwise, $\| \cdot \|$ denotes the Euclidean norm of a vector or the Frobenius norm of a matrix.

\section{Accelerated gradient methods in Optimization} \label{sec:AcceleratedGradientMethods}
\subsection{Accelerated gradient methods on Hilbert spaces}
\paragraph{Gradient descent and gradient flow}
Consider a convex and continuously differentiable objective $f \colon \R^d \to \R$ with unique minimizer $x^* \in \R^d$.
Much of the following theory also works on real Hilbert spaces, but there are some subtle differences, see e.g. \cite{SW2024}.
We denote weak convergence by $\weak$.
The most common algorithm for finding $x^*$ is \emph{gradient descent}, first proposed by Cauchy in \cite{C1847},
\begin{equation} \label{eq:GD_Hilbert}
    x^{(k + 1)}
    = x^{(k)} - \tau_k \nabla f(x^{(k)}), \qquad x^{(0)} \in \R^d,
\end{equation}
with step size $\tau > 0$.
If the gradient $\nabla f$ is $L$-Lipschitz continuous on bounded sets (we say that $f$ is \enquote{$L$-smooth}), then for $\tau \in \left(0, \frac{2}{L}\right)$, the $x^{(k)} \weak x^*$ and $f(x^{(k)}) - f(x^*) \in O(k^{-1})$, as $k \to \infty$ \cite[Thm.~2.1.14]{N2018}.

The iteration \eqref{eq:GD_Hilbert} is the explicit-in-time Euler discretization of the \emph{gradient flow} evolution
\begin{equation} \label{eq:GF_Hilbert}
    \begin{cases}
        \dot{x}(t) + \nabla f(x(t)) = 0, & t > 0, \\
        x(0) = x^{(0)} \in \R^d.
    \end{cases}
\end{equation}
Unlike in the discrete case of \eqref{eq:GD_Hilbert} we do not need any further assumptions on $\nabla f$ to conclude $x(t) \weak x^*$ and $f(x(t)) - f(x^*) \in O(t^{-1})$ as $t \to \infty$.

In 1983, Nesterov \cite{N1983} improved upon gradient descent by introducing an extrapolation step.
The Nesterov accelerated gradient (NAG) algorithm
\begin{equation} \label{eq:NAG}
    \begin{cases}
        y^{(k)} = x^{(k)} + \frac{k - 1}{k + r - 1} (x^{(k)} - x^{(k - 1)}), \\
        x^{(k + 1)}
        = y^{(k)} - \frac{1}{L} \nabla f(y^{(k)}), \\
        x^{(0)} = y^{(0)},
    \end{cases}
\end{equation}
for $r \ge 3$ enjoys the improved quadratic rate $f(x^{(k)}) - f(x^*) \in O(k^{-2})$ for $k \to \infty$.
It was shown in \cite{SBC2016} that \eqref{eq:NAG} is a discretization of the second-order \enquote{Nesterov ODE}
\begin{equation} \label{eq:Nesterov-ODE}
    \ddot{x}(t)
    + \frac{r}{t} \dot{x}(t)
    + \nabla f(x(t))
    = 0,
\end{equation}
whose trajectories fulfill $f(x(t)) - f(x^*) = O(t^{-2})$ for $t \to \infty$ if $r = 3$ \cite{SBC2016}.
If $r > 3$, then we even have $f(x(t)) - f(x^*) = o(t^{-2})$ and $x(t) \weak x^*$ for $t \to \infty$ \cite{AP2016}.
For $r = 3$, the weak convergence of $x(t)$ is still open.

\begin{remark}
    Setting the momentum parameter $\frac{k - 1}{k + r - 1}$ to zero in Nesterov's accelerated gradient descent \eqref{eq:NAG}, recovers normal gradient descent \eqref{eq:GD_Hilbert}.
    In continuous time, a similar statement holds, see \cite[p.~4]{AGR2000}. 
\end{remark}

\begin{remark}
    The question of which damping functions $t \mapsto \gamma(t)$ instead of $\frac{r}{t}$ yield convergence and at which rates has been addressed, e.g., in \cite{CEG2009,AC2017}.
    So-called inertial dynamics with Hessian-driven damping, where the damping $\frac{r}{t}$ is replaced by $\frac{r}{t} + \beta \nabla^2 f(x(t))$, have first been considered in \cite{APR2016} and explored in subsequent works.
\end{remark}

\paragraph{The strongly convex case}
If additionally, $f$ is $\mu$-strongly convex for some $\mu > 0$, then we obtain better convergence rates, summarized in the following \cref{tab:convergence_rates_strongly_convex}.
\begin{table}[H]
    \centering
    \begin{tabular}{c|ccc}
                                & descent, $\tau = L^{-1}$                              & flow, $\alpha = \sqrt{2 \mu}$ \\  \hline
        gradient                & $O\left(( 1 - \kappa)^k\right)$                       & $O(e^{-\mu t})$  \\
        accelerated gradient    & $O\left( \left(1 - \sqrt{\kappa} \right)^k\right)$    & $O\left(\exp\left(- \frac{\sqrt{\mu}}{2} t\right)\right)$
    \end{tabular}
    \caption{Convergence rates for $k \to \infty$ resp. $t \to \infty$ of (accelerated) gradient flow and descent, see \cite[Thms.~2.1.14-15]{N2018} and \cite[Thm.~8]{SBC2016} and \cite{ACFR2022} for the last column.
    Here, $\kappa \coloneqq \frac{\mu}{L}$ denote the so-called \emph{condition number} of $f$. For equal guarantees under less restrictive assumptions, see \cite{KW2024}.}
    \label{tab:convergence_rates_strongly_convex}
\end{table}
Furthermore, in this case, the trajectory $x_{\text{r}}$ of the speed restart dynamics (see later) fulfills
$f(x_{\text{r}}(t)) - f(x^*) \in O(e^{-c t})$ for some $c$ depending only on $\kappa$, for $t \to \infty$ \cite[Thm.~10]{SBC2016}.

\paragraph{The Lagrangian/Hamiltonian perspective}

We consider an autonomous \emph{Hessian-type Lagrangian}
\begin{equation} \label{eq:Hessian-type_Lagrangian}
    L \colon \R^d \times \R^d \to \R, \qquad
    (x, v) \mapsto \frac{1}{2} \| v \|_{g(x)}^2 - f(x).
\end{equation}
Here, $\| v \|_{\nabla^2 h(x)}^2 = \langle g(x) v, v \rangle$ is the weighted inner product induced by a function $g \colon \R^d \to \Sym_+(\R; d)$.
One should think of $x$ as position and of $v$ as velocity $\dot{x}$, so that Lagrangian measures kinetic energy minus potential energy.
By minimizing the Lagrangian over absolutely continuous curves connecting two points in $\R^d$, we can define a \enquote{geodesic} distance on $\R^d$ as follows:
\begin{equation} \label{eq:geod_dist}
    \frac{1}{2} D_L(x_0, x_1)^2
    \coloneqq \inf_{\substack{x \colon [0, 1] \to \R^d \\ x(i) = x_i, \ i \in \{ 0, 1 \}}} \int_{0}^{1} L\big(x(t), \dot{x}(t)\big) \d{t}, \qquad \forall x_0, x_1 \in \R^d,
\end{equation}
see also \cite[Sec.~2.1.1]{LSSVW2025}.
Following \cite{MPTOD2018}, we can consider the \emph{damped Hamiltonian system} corresponding to the Lagrangian \eqref{eq:Hessian-type_Lagrangian}.
The autonomous Hessian-type Hamiltonian is
\begin{equation*} \label{eq:Hessian-type_Hamiltonian}
    H \colon \R^d \times \R^d \to \R, \qquad 
    (x, p) \mapsto \frac{1}{2} \langle p, [\nabla^2 h(x)]^{-1} p \rangle + f(x),
\end{equation*}
i.e., the total energy of the system is kinetic \emph{plus} potential energy.
One should think of $x$ as position and of $p$ as momentum.
In the Euclidean case of $g = \id$, we obtain $H(x, p) = \frac{1}{2} \| p \|^2 + f(x)$, and the Nesterov ODE \eqref{eq:Nesterov-ODE} can be equivalently written as a \emph{damped Hamiltonian flow}:
\begin{equation*}
    \begin{pmatrix} \dot{x} \\ \dot{v} \end{pmatrix}
    + \begin{pmatrix}
        0 \\ \alpha(t) v 
    \end{pmatrix}
    - \begin{pmatrix}
        0 & \id \\ - \id & 0
    \end{pmatrix}
    \begin{pmatrix}
        \nabla_x H(x, v) \\ \nabla_v H(x, v)
    \end{pmatrix}
    = 0.
\end{equation*}
In \cite{WL2022}, the damped Hamiltonian interpretation of accelerated gradient flow was used to generalize Nesterov's accelerated gradient descent \eqref{eq:NAG} from $\R^d$ to the space of positive smooth probability densities, which is also the perspective we will adopt in this paper.

\begin{remark}[Bregman geometry and accelerated mirror descent] \label{remark:Bregman}
    The accelerated dynamics \eqref{eq:Nesterov-ODE} have been generalized to non-Euclidean, so-called Bregman geometries in \cite{KBB2015,WWJ2016}, providing a continuous time perspective on the mirror descent algorithm \cite{NY1979,BT2003,ABB2004}, enabling the treatment of the minimization of $f$ over a closed convex subset $C \subset \R^d$.
    To this end, the authors of \cite{WWJ2016} introduced a time-dependent so-called Bregman Lagrangian \cite[Eqs.~(1),(12)]{WWJ2016} (and derived the associated time-dependent Bregman Hamiltonian \cite[Eq.~(S55)]{WWJ2016}), where the damping enters explicitly due to the time-dependence.
    For a comparison of the Hessian-type Lagrangian and the Bregman Lagrangian, see the discussion in \cite[Appendix~H]{WWJ2016}.
\end{remark}


\section{Metric Gradient Flows on the Density Manifold}

The time-dependent Bregman Lagrangian approach to accelerated schemes from \cite{WWJ2016} (see \cref{remark:Bregman}) has been generalized to probability densities in \cite{TM2019,CLTW2025}.
In the Euclidean case, the approach of \cite{TM2019} coincides with the accelerated Wasserstein gradient flow in \cite{WL2022}. 
In contrast, in \cite{WL2022}, the damped Hamiltonian approach described above is generalized to different types of kinetic energies defined via the metric tensor on the density manifold.
This approach will be the subject of the following two sections.

Let $\Omega$ be a compact connected $\C^{\infty}$ Riemannian manifold without boundary,
equipped with its volume form $\vol_{\Omega}$.
Instead of considering all probability measures on $\Omega$, we work exclusively on its subset of smooth positive probability densities,
\begin{equation*}
    \widetilde{\P}(\Omega) \coloneqq \left\{ \rho \in \C^{\infty}(\Omega): \rho(x) > 0 \ \forall x \in \Omega, \int_{\Omega} \rho(x) \d{\vol_{\Omega}}(x) = 1 \right\}.
\end{equation*}
We equip the space $\C^{\infty}(\Omega)$ with its Fréchet topology.
By \cite[Sec.~3]{L1988}, the space $\widetilde{\P}(\Omega)$ forms an infinite-dimensional $C^{\infty}$ Fréchet manifold.
This property no longer holds if we include densities with zeros ("nodes" in the language of \cite[pp.~731-732]{L1988}).
For more details on Fréchet manifolds, consult \cref{sec:Frechet} in the appendix.

To define metric gradient flows of functionals defined on $\widetilde{\P}(\Omega)$, we first recall the (co)tangent bundle and the metric on $\widetilde{\P}(\Omega)$, the latter of which is essential for defining the Hamiltonian.

\begin{definition}[(Co)tangent space to $\widetilde{\P}(\Omega)$]
    The (kinematic, see \cite{KM1997}) tangent space to $\widetilde{\P}(\Omega)$ at $\rho \in \widetilde{\P}(\Omega)$ is
    \begin{equation*}
        T_{\rho} \widetilde{\P}(\Omega)
        \coloneqq \left\{ \sigma \in \C^{\infty}(\Omega): \int_{\Omega} \sigma(x) \d{\vol_{\Omega}}(x) = 0 \right\},
    \end{equation*}
    and the cotangent space at $\rho$ is $T_{\rho}^* \widetilde{\P}(\Omega) \coloneqq \C^{\infty}(\Omega) / \R$.
\end{definition}

Hence, both the tangent $T \widetilde{\P}(\Omega)$ and the cotangent bundle $T^* \widetilde{\P}(\Omega)$ are \emph{trivial} - they factor into the product of the manifold $\widetilde{\P}(\Omega)$ and a Fréchet space.

\begin{definition}[Metric tensor field $G$ on $\widetilde{\P}(\Omega)$]
    A \textit{metric tensor field on $\widetilde{\P}(\Omega)$} is a smooth map $G \colon \rho \mapsto G_{\rho}$ on $\widetilde{\P}(\Omega)$ such that, for each $\rho \in \widetilde{\P}(\Omega)$, $G_{\rho} \colon T_{\rho} \widetilde{\P}(\Omega) \to T_{\rho}^* \widetilde{\P}(\Omega)$ is a smooth and invertible map.    
\end{definition}

\begin{remark}[Onsager operator, mobility function]
    The mapping $G_{\rho}^{-1}$ is referred to as \textit{Onsager} operator \cite{LM2013}.
    If there is a nonnegative functional $M \colon \widetilde{\P}(\Omega) \to [0, \infty)$ such that $G_{\rho}^{-1}[\Phi] = \nabla \cdot (M(\rho) \nabla \Phi)$ (and that quantity exists) for all $\Phi \in T_{\rho}^* \widetilde{\P}(\Omega)$, then we call $M$ the \textit{mobility function}, see \cite[Sec.~2.3]{LWL2024} and \cite[Rem.~12]{DNS2023}.
\end{remark}

A metric tensor field yields a \textit{metric on $\widetilde{\P}(\Omega)$} via the assignment $\widetilde{\P}(\Omega) \ni \rho \mapsto g_{\rho}$, where
\begin{equation*}
     g_{\rho} = \langle \cdot, \cdot \rangle_{\rho} \colon T_{\rho} \widetilde{P}(\Omega) \times T_{\rho} \widetilde{P}(\Omega) \to \R, \qquad (\sigma_1, \sigma_2) \mapsto \int_{\Omega} \sigma_1(x) \left(G_{\rho}[\sigma_2]\right)(x) \d{\vol_\Omega}(x).
\end{equation*}

While there are also the Fisher-Rao metric \cite{CWDS2024}, the Kalman-Wasserstein metric and the Hessian transport metric and affine-invariant modifications of some of these metrics, see, e.g. \cite{WL2022,LY2019,CHHRS2023}, we now focus on the Wasserstein-2 metric \cite{V2003,V2008} and
the Stein metric \cite{LW2016,L2017,NR2023,DNS2023,N2024}.

\begin{example}[Wasserstein-2 metric]
    The Wasserstein metric uses the inverse metric tensor field
    \begin{equation*}
        [G_\rho^{W}]^{-1} \colon T_{\rho}^* \widetilde{\P}(\Omega) \to T_{\rho} \widetilde{\P}(\Omega), \qquad
        \Phi \mapsto - \nabla \cdot (\rho \nabla \Phi), \qquad \rho \in \widetilde{\P}(\Omega).
    \end{equation*}
\end{example}

\begin{example}[Stein metric] \label{example:SteinMetric}
    For a symmetric, positive definite and smooth kernel $K \colon \Omega \times \Omega \to \R$, define the inverse metric tensor field
    \begin{equation*}
        \left[G_{\rho}^{(K)}\right]^{-1} \colon T_{\rho}^* \widetilde{\P}(\Omega) \to T_{\rho} \widetilde{\P}(\Omega), \qquad 
        \Phi \mapsto \left( x \mapsto - \nabla_x \cdot \left( \rho(x) \int_{\Omega} K(x, y) \rho(y) \nabla \Phi(y) \d{y} \right) \right)
    \end{equation*}
    Here and in the following we assume that $\Omega$ is regular enough for $(G_{\rho}^{(K)})^{-1}$ to be invertible and map $T_{\rho}^* \widetilde{\P}(\Omega)$ to $T_{\rho} \widetilde{\P}(\Omega)$ by the divergence theorem.
\end{example}

In order to introduce gradient flows on $\widetilde{\P}(\Omega)$, we first need define a suitable notation of differential $\delta E$ (i.e., a covector field) for functionals $E$ on $\widetilde{\P}(\Omega)$, which uses the linear structure of the ambient vector space $\C^{\infty}(\Omega)$.
We can then conveniently express the gradient (i.e., (tangent) vector field) of such functionals as $G_{\rho}^{-1}[\delta E(\rho)]$, recovering the familiar formula $g(\grad(E), s) = \langle \delta E, s \rangle$ for any tangent vector field $s$.

\begin{definition}[First linear functional derivatives]
    Consider a functional $E \colon \widetilde{\P}(\Omega) \to \R$.
    The first variation (or: linear first order functional derivatives or: $L^2(\Omega)$-gradient) of $E$, if it exists, is the one-form $\delta E \colon \widetilde{\P}(\Omega) \to \C^{\infty}(\Omega)$ fulfilling for all $\rho \in \widetilde{\P}(\Omega)$ that 
    \begin{gather*}
        \langle \delta E(\rho), \phi \rangle_{L^2(\Omega)}
        = \frac{\d}{\d{t}} E(\rho + t \phi) \bigg|_{t = 0} \quad \forall \phi \in \C^{\infty}(\Omega) \text{ s.t. } \rho + t \phi \in \widetilde{\P}(\Omega) \text{ for sufficiently small } t \in \R.
    \end{gather*}
\end{definition}
In the following, we assume that the functional derivative exists and is unique, which is the case for our functional of interest, the KL-divergence, if $\Omega$ is bounded.

We summarize some examples of functional derivatives in the following \cref{table:functional_derivatives}.
\begin{table}[H]
    \centering
    \begin{tabular}{c|c|c}
        Functional                      & $E(\rho)$                                                                                                     & $\delta E(\rho)$  \\ \hline
        Dirichlet energy                & $\frac{1}{2} \| \nabla \rho \|_{L^2(\Omega)}^2$                                                               & $-\Delta \rho$          \\
        Entropy                         & $\int_{\Omega} f(\rho(x)) \d{x}$                                                                              & $f' \circ \rho$        \\
        Interaction                     & $\int_{\Omega} \int_{\Omega} W(x, y) \rho(x) \rho(y) \d{x} \d{y}$                                             & $\int_{\Omega} W(x, \cdot) \rho(x) \d{x}$ \\
        Potential                       & $\int_{\Omega} V(x) \rho(x) \d{x}$                                                                            & $V$   \\
        $D_f(\cdot \mid \rho^*)$        & $\int_{\Omega} f\left(\frac{\rho(x)}{\rho^*(x)}\right) \rho^*(x) \d{x}$                                       & $f'\left(\frac{\rho}{\rho^*}\right)$    \\
        MMD$_{K, \rho^*}$               & $\frac{1}{2} \int_{\Omega} K(x, y) \big(\rho(x) - \rho^*(x) \big) \big(\rho(y) - \rho^*(y) \big) \d{x} \d{y}$ & $\int_{\Omega} K(x, \cdot) (\rho(x) - \rho^*(x)) \d{x}$ \\
        $R_{\alpha}(\cdot \mid \rho^*)$ & $\frac{1}{\alpha - 1} \ln\left(\int_{\Omega} \rho(x)^{\alpha} \tilde{\rho}(x)^{1 - \alpha} \d{x}\right)$      & $\frac{\alpha}{\alpha - 1} \frac{1}{I_{\alpha}(\rho \mid \rho^*)} \left(\frac{\rho}{\rho^*}\right)^{\alpha - 1}$   \\
    \end{tabular}
    \caption{First variation of popular energy functionals, including the maximum mean discrepancy (MMD), the $f$-divergence $D_f$, and the $\alpha$-Rényi divergence $R_{\alpha}$ for $\alpha \in (0, 1) \cup (1, \infty)$, with target density $\rho^* \in \widetilde{\P}(\Omega)$. Here, $I_{\alpha}$ is the $\alpha$-mutual information.}
    \label{table:functional_derivatives}
\end{table}

\begin{definition}[Metric gradient flow on $\widetilde{\P}(\Omega)$]
    We say that a smooth curve $\rho \colon [0, \infty) \to \widetilde{\P}(\Omega)$, $t \mapsto \rho_t$ is a $(\widetilde{\P}(\Omega), G)$-gradient flow of a functional $E \colon \widetilde{\P}(\Omega) \to \R$ starting at $\rho(0)$ if 
    \begin{equation} \label{eq:metric_GF}
        \partial_t \rho_t
        = - G_{\rho_t}^{-1}[\delta E(\rho_t)] \qquad \forall t > 0.
    \end{equation}
\end{definition}

\section{Accelerated Gradient Flows}
As described in \cref{sec:AcceleratedGradientMethods}, generalizing the continuous limit of Nesterov's accelerated gradient descent \cite{N1983} to the density manifold $\widetilde{\P}(\Omega)$ yields the following definition.

\begin{definition}[Accelerated $(\widetilde{\P}(\Omega), G)$-gradient flow {\cite[Sec.~3]{WL2022}}]
    Let $\alpha \colon [0, \infty) \to [0, \infty)$ be a function, referred to as \enquote{damping}.
    The $\alpha$-accelerated $(\widetilde{\P}(\Omega), G)$-gradient flow of $E \colon \widetilde{\P}(\Omega) \to \R$ is the curve $(\rho_t)_{t > 0}$ solving the linearly-damped Hamiltonian flow
    \begin{equation} \label{eq:acc_Hamiltonian_flow}
        \partial_t \begin{pmatrix}
            \rho_t \\ \Phi_t
        \end{pmatrix}
        + \begin{pmatrix}
            0 \\ \alpha_t \Phi_t
        \end{pmatrix}
        - \begin{pmatrix}
            0 & 1 \\ -1 & 0
        \end{pmatrix} 
        \begin{pmatrix}
            \delta_1 H(\rho_t, \Phi_t) \\
            \delta_2 H(\rho_t, \Phi_t)
        \end{pmatrix}
        = 0,
        \qquad \rho(0) = \rho_0, \Phi(0) = 0,
    \end{equation}
    where 
    \begin{equation*}
        H \colon T \widetilde{\P}(\Omega) \to \R, \qquad 
        (\rho, \Phi) \mapsto \frac{1}{2} g_{\rho}(\Phi, \Phi) + E(\rho)
    \end{equation*}
    is the Hamiltonian, and $\delta_i$ for $i \in \{ 1, 2 \}$ denotes the functional derivative with respect to the $i$-th component.
    
    By \cite[Prop.~1]{WL2022}, the damped Hamiltonian flow \eqref{eq:acc_Hamiltonian_flow} can be rewritten as
    \begin{equation} \label{eq:Accelerated_GF}
        \begin{cases}
            \partial_t \rho_t - G_{\rho_t}^{-1}[\Phi_t] = 0, \\
            \partial_t \Phi_t + \alpha_t \Phi_t 
            + \frac{1}{2} \delta \left\{ \rho \mapsto g_{\rho}(\Phi_t, \Phi_t) \right\}(\rho_t) + \delta E(\rho_t)
            = 0,
        \end{cases}
        \qquad \rho(0) = \rho_0, \Phi(0) = 0.
    \end{equation}
\end{definition}

\begin{remark}
    A big difference to other accelerated gradient flows in spaces of probability measures like \cite{TM2019,CLTW2025} is that our Hamiltonian does not separate into a position-dependent term and a velocity-dependent term. In particular, the mobility function depends non-linearly on the density, which makes analysis more complicated.
\end{remark}

\begin{example}[Accelerated Stein flow {\cite[Ex.~8]{WL2022}}]
    The accelerated Stein metric gradient flow of some functional $E \colon \widetilde{\P}(\Omega) \to \R$ is
    \begin{equation} \label{eq:Accelerated_Stein_Flow}
        \begin{cases}
            \partial_t \rho_t + \nabla \cdot \left(\rho_t \displaystyle\int_{\Omega} K(\cdot, y) \rho_t(y) \nabla \Phi_t(y) \d{y}\right)
            = 0, \\
            \displaystyle\partial_t \Phi_t + \alpha_t \Phi_t 
            + \int_{\Omega} K(y, \cdot) \langle \nabla \Phi_t(y), \nabla \Phi_t(\cdot) \rangle_{\R^n} \rho_t(y) \d{y} + \delta E(\rho_t)
            = 0.
        \end{cases}
        \quad \rho_0 \in \widetilde{\P}(\Omega), \ \Phi_0 = 0.
    \end{equation}
\end{example}

Since Wasserstein gradient flows are generally stable, whereas Stein metric flows can degenerate - at least numerically - due to (near-)singularity of the kernel Gram matrix, we will consider a mixed Stein-Wasserstein metric to increase stability.

\begin{example}[Stein-Wasserstein metric] \label{example:S-WS}
    For $\eps > 0$ and a kernel like in \cref{example:SteinMetric}, the \emph{Stein-Wasserstein metric} is induced by the metric tensor field
    \begin{equation} \label{eq:SWSmetricTensorField}
        (G_{\rho}^{(W,K)})^{-1} \coloneqq (G_{\rho}^{(K)})^{-1} + \eps (G_{\rho}^{W})^{-1}.
    \end{equation}

    By a simple computation (compare \cite[Exs.~6,~8]{WL2022}), the accelerated gradient flow of some functional $E \colon \widetilde{\P}(\Omega) \to \R$ with respect to the Stein-Wasserstein metric is
    \begin{equation} \label{eq:S_WS}
        \begin{cases}
            \partial_t \rho_t + \nabla \cdot \left(\rho_t \left( \displaystyle\int_{\Omega} K(\cdot, y) \rho_t(y) \nabla \Phi_t(y) \d{y}+ \eps \nabla \Phi_t\right)\right) = 0, \\
            \displaystyle\partial_t \Phi_t + \alpha_t \Phi_t 
            + \int_{\Omega} K(y, \cdot) \langle \nabla \Phi_t(y), \nabla \Phi_t(\cdot) \rangle_{\R^n} \rho_t(y) \d{y}
            + \frac{\eps}{2} \| \nabla \Phi_t \|_2^2  + \delta E(\rho_t)
            = 0.
        \end{cases}
    \end{equation}
\end{example}
Assuming that the dynamics \eqref{eq:S_WS} conserve the unit mass of $\rho_t$, a time-continuous particle formulation of \eqref{eq:Accelerated_Stein_Flow} with $V_t \coloneqq \nabla \Phi_t(X_t)$ (compare with \cite[Eqs.~(10),(57)]{WL2022}) is
\begin{equation} \label{eq:WLparticle}
    \begin{cases}
        \displaystyle\dot{X}_t = \int_{\Omega} K(X_t, y) \nabla \Phi_t(y) \rho_t(y) \d{y} \\
        \displaystyle\dot{V}_t = - \alpha_t V_t
        - \displaystyle\int_{\Omega} \nabla_1 K(X_t, y) \langle \nabla \Phi_t(y), V_t \rangle \rho_t(y) \d{y}
        - \nabla \delta E(\rho_t)(X_t)
    \end{cases}
    \quad X_0 \in \Omega, \ V_0 = 0.
\end{equation}
We defer an informal justification of \eqref{eq:WLparticle} to \cref{subsec:particleFormulation}.

\begin{remark}[Choice of energy]
    In this work we focus on $\Omega = \R^d$ and $E = \KL(\cdot \mid Z^{-1} e^{-f})$ being the KL divergence, whose target is the Gibbs-Boltzmann distribution $Z^{-1} e^{-f}$ with differentiable \textit{potential} $f \colon \Omega \to \R$ such that the normalization constant $Z \coloneqq \int_{\Omega} e^{-f} \d{\vol_{\Omega}}(x)$ is finite.
    This loss function is widely used in applications like variational inference and Bayesian statistics.
    Another reason for focusing on this specific energy is that, as demonstrated in \cite{CHHRS2023}, it is the only $f$-divergence (up to rescaling) with continuously differentiable $f$ that is invariant under scaling of the target measure.
    By \cite{C2025}, it is also the only Bregman divergence with this property.
    This invariance property can simplify the numerical implementation of particle methods \cite{CHHRS2023}.
\end{remark}

\begin{remark}[Particle methods and score estimation]
    The disadvantage of the particle method \eqref{eq:WLparticle} is that we have to estimate the density $\rho_t$ in order to evaluate the term $\nabla \delta E(\rho)$; for $E = \KL(\cdot \mid Z^{-1} e^{- f})$, we have $\nabla \delta E(\rho) = \nabla \log(\rho) + \nabla f$, where the first summand is called the \emph{score of $\rho$}.
    However, all other terms involving $\rho_t$ are expectations against $\rho_t$ and can thus be efficiently estimated using i.i.d. samples from $\rho_t$.
\end{remark}


\subsection{A score-estimation-free particle algorithm} \label{subsec:newParticleAlgo}
In this subsection, we review the particle formulation of \eqref{eq:S_WS}, which was introduced \cite{SL2025} and, in contrast to the particle formulation \cite[Eq.~(58)]{WL2022}, does not need any score estimation.
Furthermore, we provide a full proof of \cref{lemma:central_Lemma}, which was omitted in \cite{SL2025} due to space constraints.

Let $(\rho_t, \Phi_t)_{t > 0}$ fulfill the Stein-Wasserstein accelerated gradient flow \eqref{eq:S_WS}, and $X_t \sim \rho_t$ be a particle.
Then, we have
\begin{equation} \label{eq:Acc_Stein}
    \dot{X}_t
    = \int_{\Omega} K(X_t, y) \nabla \Phi_t(y) \rho_t(y) \d{y} + \eps V_t, \qquad t > 0.
\end{equation}
We define velocity of the particle $X$ over time, $Y \colon (0, \infty) \to \R^d$, $t \mapsto \dot{X}_t$.
In the following lemma, we calculate the particle's acceleration $\dot{Y}_t$.

\begin{lemma}[Acceleration in the Stein-Wasserstein flow] \label{lemma:central_Lemma}
    Let $(\rho_t, \Phi_t)_{t > 0}$ solve \eqref{eq:S_WS}, $X_t \sim \rho_t$ and $Y_t \coloneqq \dot{X}_t$.
    For all $t > 0$ we have
    \begin{equation} \label{eq:dot_Yt}
    \begin{aligned}
        \dot{Y}_t
        & = - \alpha_t Y_t - \int_{\Omega} K(X_t, y) \nabla \delta E(\rho_t)(y) \rho_t(y) \d{y} 
        + \int_{\Omega} \int_{\Omega} \rho_t(y) \rho_t(z) \langle \nabla \Phi_t(z), \nabla \Phi_t(y) \rangle_{\R^n} \\
        & \qquad \cdot \bigg[K(y, z) (\nabla_2 K)(X_t, y) + K(X_t, z) (\nabla_1 K)(X_t, y) - K(X_t, y) (\nabla_2 K)(z, y) \bigg] \d{y} \d{z} \\
        & \qquad + \eps \bigg( \int_{\Omega} \left( (\nabla_1 K)(X_t, y) \langle V_t, \nabla \Phi_t(y) \rangle_{\R^n} + (\nabla_2 K)(X_t, y) \| \nabla \Phi_t(y) \|_2^2\right) \rho_t(y) \d{y}
        + \alpha_t V_t
         + \dot{V}_t \bigg).
    \end{aligned}
    \end{equation}
\end{lemma}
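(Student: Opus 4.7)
The plan is to differentiate \eqref{eq:Acc_Stein} in time and systematically substitute the PDE system \eqref{eq:S_WS} wherever $\partial_t \rho_t$ or $\partial_t \Phi_t$ appears. Writing $Y_t = I(t) + \eps V_t$ with $I(t) \coloneqq \int_\Omega K(X_t, y)\nabla\Phi_t(y)\rho_t(y)\,\d y$, the product rule gives $\dot Y_t = \dot I(t) + \eps \dot V_t$; the $\eps \dot V_t$ is left as-is so that it matches the corresponding term in \eqref{eq:dot_Yt}. In turn, $\dot I(t)$ decomposes into three contributions arising from differentiating $K(X_t, y)$ through $\dot X_t = Y_t$, from $\nabla \partial_t \Phi_t(y)$, and from $\partial_t \rho_t(y)$.

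For the first contribution, the chain rule gives $\partial_t K(X_t, y) = \langle (\nabla_1 K)(X_t, y), Y_t \rangle$; substituting $Y_t = I(t) + \eps V_t$ splits the resulting integral into a non-$\eps$ double-kernel piece featuring $K(X_t, z)(\nabla_1 K)(X_t, y)$ and an $\eps$-piece yielding the $(\nabla_1 K)(X_t, y)\langle V_t, \nabla\Phi_t(y)\rangle$ term of \eqref{eq:dot_Yt}. For the second contribution, I insert the second equation of \eqref{eq:S_WS} for $\partial_t \Phi_t$, take $\nabla_y$, and integrate against $K(X_t, y)\rho_t(y)\,\d y$. This produces the damping piece $-\alpha_t I(t)$, the desired term $-\int K\,\nabla \delta E(\rho_t)\,\rho_t\,\d y$, a double integral with factor $K(X_t, y)(\nabla_1 K)(y, z)$ (which reshapes into the $-K(X_t, y)(\nabla_2 K)(z, y)$ term of \eqref{eq:dot_Yt} via the symmetry $(\nabla_1 K)(y, z) = (\nabla_2 K)(z, y)$ of the scalar kernel $K$), and residual contributions involving $\nabla^2 \Phi_t$.

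For the third contribution, I substitute the continuity equation $\partial_t \rho_t = -\nabla \cdot (\rho_t v_t)$ with $v_t = \int K(\cdot, z)\nabla\Phi_t(z)\rho_t\,\d z + \eps \nabla\Phi_t$, and perform vector-valued integration by parts componentwise; since $\Omega$ is compact without boundary (or, on $\R^d$, under the usual decay assumptions on $\rho_t$, $\Phi_t$ and their derivatives) no boundary terms arise. The divergence thereby transfers onto the integrand $K(X_t, y)\nabla\Phi_t(y)$, producing two pieces: one in which the derivative lands on $K$, giving, after inserting $v_t$, the $K(y, z)(\nabla_2 K)(X_t, y)$ double integral and the $\eps(\nabla_2 K)(X_t, y)\|\nabla\Phi_t(y)\|^2$ term of \eqref{eq:dot_Yt}; and one in which the derivative lands on $\nabla\Phi_t$, contributing further Hessian-of-$\Phi_t$ terms.

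The final step is bookkeeping. The two non-$\eps$ Hessian-of-$\Phi_t$ double integrals (one from the $\partial_t \Phi_t$ substitution, one from the $\partial_t \rho_t$ substitution) appear with opposite sign and cancel outright. The two $\eps$-Hessian pieces cancel after applying the identity $\nabla^2 \Phi_t(y)\,\nabla\Phi_t(y) = \tfrac{1}{2}\nabla\|\nabla\Phi_t(y)\|^2$. Finally, using $I(t) = Y_t - \eps V_t$ absorbs the surviving $-\alpha_t I(t)$ into $-\alpha_t Y_t + \eps \alpha_t V_t$, accounting for the $\eps \alpha_t V_t$ term in \eqref{eq:dot_Yt}. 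The main obstacle is the tensor-index bookkeeping during the vector-valued integration by parts and the careful tracking of all Hessian-of-$\Phi_t$ contributions to confirm the cancellations; each individual manipulation is a routine application of the chain/product rule or integration by parts, but assembling everything into the precise form \eqref{eq:dot_Yt} takes care.
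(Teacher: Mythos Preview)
Your proposal is correct and follows essentially the same route as the paper's proof: differentiate \eqref{eq:Acc_Stein} via the product rule, substitute the two equations of \eqref{eq:S_WS} for $\partial_t\Phi_t$ and $\partial_t\rho_t$, integrate by parts on the continuity-equation term, observe that the non-$\eps$ Hessian-of-$\Phi_t$ double integrals cancel and that the $\eps$-Hessian pieces cancel after $\nabla^2\Phi_t\,\nabla\Phi_t=\tfrac12\nabla\|\nabla\Phi_t\|^2$, and finally rewrite $-\alpha_t I(t)=-\alpha_t Y_t+\eps\alpha_t V_t$. The only cosmetic difference is that the paper obtains the $-K(X_t,y)(\nabla_2 K)(z,y)$ factor directly (by writing the interaction term as $\int K(z,y)\langle\cdot,\cdot\rangle\rho_t(z)\,\d z$ before taking $\nabla_y$), whereas you reach it via the symmetry $(\nabla_1 K)(y,z)=(\nabla_2 K)(z,y)$; both are equivalent.
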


\begin{proof}
    In \cref{subsec:proof}.
\end{proof}

\begin{remark}
    For $\eps = 0$ the right-hand side of \eqref{eq:dot_Yt} consists of the \emph{damping term} $- \alpha_t Y_t$, the \emph{energy term} $\int_{\Omega} K(X_t, y) \nabla \delta E(\rho_t)(y) \rho_t(y) \d{y}$ and an \emph{interaction term}.
\end{remark}

\begin{remark}
    If $K(x, y) = \phi(x - y)$ for some $\phi \colon \R^d \to \R$, that is, $K$ is \textit{translation invariant}, then $\nabla_2 K = - \nabla_1 K$ and so the term in the square brackets in the second line of \eqref{eq:dot_Yt} simplifies a bit.
\end{remark}

\begin{remark}[Simplifying the KL energy term] \label{remark:Simplied_Energy_Term_KL}
    We can use integration by parts to avoid having to estimate the score $\delta E(\rho) = \ln\left(\frac{\rho}{\pi}\right) + 1$.
    Indeed,
    \begin{equation*}
        \nabla \delta E(\rho) = \nabla \ln \circ \rho - \nabla \ln \circ \pi = \nabla \ln \circ \rho + \nabla f, \qquad \rho \in \widetilde{\P}(\Omega),    
    \end{equation*}
    and thus integration by parts under sufficient regularity of the boundary yields that the \textit{energy term} in \eqref{eq:dot_Yt} is, for $x \in \Omega$, equal to
    \begin{align*}
        - \int_{\Omega} K(x, y) \nabla \delta E(\rho)(y) \rho_t(y) \d{y}
        & = - \int_{\Omega} K(x, y) \nabla \rho_t(y) + K(x, y) \rho_t(y) \nabla f(y) \d{y} \\
        & = \int_{\Omega} \left( \nabla_2 K(x, y) - K(x, y) \nabla f(y)\right) \rho_t(y) \d{y}.
    \end{align*} 
\end{remark}

In summary, for $\eps = 0$, our deterministic interacting particle scheme associated to \eqref{eq:Accelerated_Stein_Flow} is
\begin{equation} \label{eq:Accelerated_Stein_Flow_2}
    \begin{cases}
        \dot{X}_t = Y_t, \\
        \dot{Y}_t = - \alpha_t Y_t 
        + \displaystyle\int_{\Omega} \left( \nabla_2 K(X_t, y) - K(X_t, y) \nabla f(y)\right) \rho_t(y) \d{y}
        + \displaystyle\int_{\Omega} \int_{\Omega} \rho_t(y) \rho_t(z) \langle \nabla \Phi_t(z), \nabla \Phi_t(y) \rangle \\
        \qquad \qquad \cdot \bigg[K(y, z) (\nabla_2 K)(X_t, y) + K(X_t, z) (\nabla_1 K)(X_t, y) - K(X_t, y) (\nabla_2 K)(z, y) \bigg] \d{y} \d{z}.
    \end{cases}
\end{equation}

Discretizing explicitly in time yields the following update equations.
\begin{lemma}[ASVGD update equations] \label{lemma:ASVGD_fully_discrete}
    The full discretization in time and space of \eqref{eq:Accelerated_Stein_Flow_2} is
    \begin{equation*}
        \begin{cases}
            X_{k + 1}
            \gets X_{k} + \tau_k Y_{k} \\
            V_{k + 1}
            \gets N K_{k + 1}^{\dagger} Y_{k} \\
            Y_{k + 1}
            \gets \alpha_{k} Y_{k}
            + \displaystyle\frac{\tau}{N} \left( \left(\sum_{j = 1}^{N} A_{i, j}^{k + 1, (2)}\right)_{i = 1}^{N} - K_{k + 1} G_{k + 1} \right) \\
            \qquad \qquad + \displaystyle\frac{\tau}{N^2} \left[ 
            \left(\sum_{i = 1}^{N} A_{j, i}^{k + 1, (2)} (V_k V_k^{\tT} \odot K_k) \1_N)_i
            + (V_k V_k^{\tT} K_k)_{i, j} A_{j, i}^{k + 1, (1)} \right)_{j = 1}^{N}
            - K_{k + 1} r_{k + 1} 
            \right],
        \end{cases}
    \end{equation*}
    where $\alpha_k = \frac{k - 1}{k + 2}$ or $\alpha_k$ is constant, $\dagger$ denotes the pseudo-inverse, and for $k \in \N$ we set
    \begin{gather*}
        X_{k}
        \coloneqq \begin{pmatrix} X_{k}^{(1)} & \ldots & X_{k}^{(N)} \end{pmatrix}^{\tT} \in \R^{N \times d}, \quad 
        \text{and } Y_{k}, 
        V_{k}
        \in \R^{N \times d} 
        \text{ analogously} \\
        G_{k}
        \coloneqq \begin{pmatrix}
            \nabla f(X_{k}^{(1)}) & \ldots & \nabla f(X_{k}^{(N)})
        \end{pmatrix}^{\tT} \in \R^{N \times d}, \quad
        K_{k}
        \coloneqq \left( K\left(X_{k}^{(i)}, X_{k}^{(j)}\right) \right)_{i, j = 1}^{N} \in \R^{N \times N}, \\
        A^{k, (m)}
        \coloneqq \big(\nabla_m K(X_k^{(i)}, X_k^{(j)})\big)_{i, j} \in \R^{N \times N \times d}, \quad m \in \{ 1, 2 \}, \qquad
        (r_k)_i \coloneqq \sum_{\ell = 1}^{N} \langle V_k^{(i)}, V_k^{(\ell)} \rangle A_{j, i}^{k, (2)} \in \R^{N}.
    \end{gather*}
\end{lemma}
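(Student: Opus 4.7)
The plan is to obtain the update equations by a two-step procedure: first, an explicit Euler time discretization with step size $\tau_k > 0$ of the coupled ODE system \eqref{eq:Accelerated_Stein_Flow_2}, and second, replacement of the density $\rho_t$ by the empirical measure $\hat{\rho}_{k} \coloneqq \frac{1}{N} \sum_{j = 1}^N \delta_{X_{k}^{(j)}}$ carried by the $N$ particles at each time level. The position update $X_{k+1} = X_k + \tau_k Y_k$ is immediate from $\dot{X}_t = Y_t$.

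The definition $V_{k+1} = N K_{k+1}^{\dagger} Y_k$ is obtained not from a separate ODE, but by inverting the identity $Y_t = \int_{\Omega} K(X_t, y) \nabla \Phi_t(y) \rho_t(y) \d{y}$, which is the $\eps = 0$ case of \eqref{eq:Acc_Stein}. Evaluating this identity at the updated particle positions under $\hat{\rho}_{k+1}$ yields the linear system $Y_k = \frac{1}{N} K_{k+1} V_{k+1}$ for the unknown vectors $V_{k+1}^{(j)} \approx \nabla \Phi_{t_{k+1}}(X_{k+1}^{(j)})$; the Moore--Penrose pseudo-inverse is used because the kernel Gram matrix may be singular. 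This reconstruction of $\nabla \Phi$ from $Y$ is precisely the mechanism that enables the scheme to be score-estimation free, in the spirit of \cref{remark:Simplied_Energy_Term_KL}.

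Next, I would discretize the right-hand side of the $Y$-equation in \eqref{eq:Accelerated_Stein_Flow_2} term by term, again evaluating integrals against $\hat{\rho}_{k+1}$. The damping term $-\alpha_t Y_t$ combines with the leading $Y_k$ from the Euler step into $(1 - \tau_k \alpha_{t_k}) Y_k$, absorbed into a single coefficient $\alpha_k$ (or set to a Nesterov-type sequence $\alpha_k = (k-1)/(k+2)$ following \eqref{eq:NAG}). The single-integral energy term, after substitution of $\hat{\rho}_{k+1}$ and the simplification from \cref{remark:Simplied_Energy_Term_KL}, produces at row $i$ the expression $\frac{1}{N}\bigl[(\sum_j A_{i,j}^{k+1,(2)}) - (K_{k+1} G_{k+1})_i\bigr]$, matching the first bracket in the statement. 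For the double-integral interaction term I substitute $\nabla \Phi_{t_{k+1}}(X_{k+1}^{(j)}) \leftarrow V_{k+1}^{(j)}$ and pair indices: each of the three summands $K(y,z)\nabla_2 K(X_t,y)$, $K(X_t,z)\nabla_1 K(X_t,y)$ and $-K(X_t,y)\nabla_2 K(z,y)$ becomes a triple sum over $(i,j,\ell)$ whose inner product $\langle V_{k+1}^{(j)}, V_{k+1}^{(\ell)}\rangle$ is identified with the entry $(V V^{\tT})_{j,\ell}$, and which then collapses into the stated matrix expressions in $V V^{\tT}$, $K_{k+1}$, $A^{k+1,(1)}$ and $A^{k+1,(2)}$.

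The main obstacle will be purely combinatorial bookkeeping: one must carefully track which of the three kernel arguments is the fixed particle $X_{k+1}^{(i)}$ versus a summed one, correctly assign the $\nabla_1$ versus $\nabla_2$ derivatives to the matrices $A^{k+1,(1)}$ and $A^{k+1,(2)}$, and verify that the resulting row-sums over the two dummy indices reproduce the Hadamard product $V V^{\tT} \odot K_{k+1}$ (arising when the free index sits on a $\nabla K$ factor paired against a scalar $K(y,z)$) versus the ordinary matrix product $V V^{\tT} K_{k+1}$ (arising when the free index sits on a $\nabla K$ factor paired against $K(X_t,z)$ or $K(X_t,y)$). Once the index assignments are fixed, the result is a direct read-off; no new analytic content beyond \cref{lemma:central_Lemma} is required.
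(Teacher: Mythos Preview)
Your proposal is correct and follows essentially the same approach as the paper: derive a particle-wise ODE by replacing $\rho_t$ with the empirical measure, apply forward Euler in time, recover $V_{k+1}$ by pseudo-inverting the discretized relation \eqref{eq:Acc_Stein}, absorb $(1-\tau_k\alpha_{t_k})$ into $\alpha_k$, and then rewrite the resulting triple sums in the matrix notation of the statement. The only cosmetic difference is that the paper discretizes in space first and then in time, whereas you describe the reverse order; this has no effect on the outcome, and your identification of the combinatorial bookkeeping as the only remaining work matches the paper's treatment exactly.
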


\begin{proof}
    See \cref{subsec:proof_full_discretization}.
\end{proof}

For the Gaussian kernel and the generalized bilinear kernel, the particle updates from \cref{lemma:ASVGD_fully_discrete} simplify.
We summarize them in \cref{algo:ASVGD}.
\begin{algorithm}
    \caption{Accelerated Stein variational gradient descent} \label{algo:ASVGD}
    \KwData{Number of particles $N \in \N$, number of steps $M \in \N$, step size $\tau > 0$, target score function $\nabla f \colon \R^d \to \R^d$. Either $A \in \Sym(\R; d)$ or a bandwidth $\sigma^2 > 0$, regularization parameter $\eps \ge 0$, (constant damping $\beta \in (0, 1)$).}
    \KwResult{Matrix $X^{M}$, whose rows are particles that approximate the target distribution $\pi \propto \exp(-f)$.}
    \textbf{Step 0.} Initialize $Y^0 = 0 \in \R^{N \times d}$ and  \texttt{restart\_count}$=\1_N$. \\
    \For{$k=0,\ldots, M - 1$}{
    \emph{\textbf{Step 1.} Update particle positions using particle momenta.}
    $$\displaystyle X^{k + 1}
    \gets X^{k} + \sqrt{\tau} Y^{k}.$$
    \emph{\textbf{Step 2.} Form kernel matrix and update momentum in density space.}
    $$
    K^{k + 1}
    = \big(K(X_i^{k + 1}, X_j^{k + 1}) \big)_{i, j = 1}^{N},
    \qquad
    V^{k + 1}
    \gets N (K^{k + 1} + \eps \id_N)^{-1} Y^{k}.$$\\
    \emph{\textbf{Step 3.} Update damping parameter using speed and gradient restart.} \\
    \For{$i = 1, \ldots, N$}
    {\eIf{$\| X_{i}^{k + 1} - X_{i}^{k} \|_2 < \| X_{i}^{k} - X_{i}^{k - 1} \|_2$}
    {\texttt{restart\_count}$_i= 1$}
    {\texttt{restart\_count}$_i += 1$}
    } 
    Only for the Gaussian kernel: \\
    \If{$\tr\left((V^{k + 1})^{\tT} (K^{k + 1} \nabla f(X^{k + 1}) + \big(K^{k + 1} - \diag(K^{k + 1} \1_N)\big) X^{k + 1})\right) < 0$,}{\texttt{restart\_count} $= \1_N$}
    $\alpha_{i}^k = \displaystyle\frac{\texttt{restart\_count}_i - 1}{\texttt{restart\_count}_i + 2}$, $i \in \{ 1, \ldots, N \}$. \\
    Alternatively, set a \textit{constant damping} for each particle: $\alpha_i^k = \beta$. \\
    \emph{\textbf{Step 4.} Update momenta.} \\
    For the bilinear kernel:
    \begin{align*}
        Y^{k + 1}
        & \gets \alpha^{k} Y^{k}
        - \frac{\sqrt{\tau}}{N} K^{k + 1} \nabla f(X^{k + 1})
        + \sqrt{\tau} \left(1 + N^{-2} \tr\left((V^{k + 1})^{\tT} K^{k + 1} V^{k + 1}\right)\right) X^{k + 1} A.   
    \end{align*}
    For the Gaussian kernel: 
    \begin{align*}
        W^{k + 1}
        & \gets N K^{k + 1} + K^{k + 1} (V^{k + 1} (V^{k + 1})^{\tT}) \circ K^{k + 1} - K^{k + 1} \circ (K^{k + 1} V^{k + 1} (V^{k + 1})^{\tT}), \\
        Y^{k + 1}
        & \gets \alpha^{k} Y^{k} - \frac{\sqrt{\tau}}{N} K^{k + 1} \nabla f(X^{k + 1}) + \frac{\sqrt{\tau}}{N^2 \sigma^2} \left(\diag(W^{k + 1} \1_N) - W^{k + 1}\right) X^{k + 1}.
    \end{align*}
    }
\end{algorithm}

\begin{remark}[Wasserstein metric regularization]
    The Gaussian kernel matrix is invertible if the particles are distinct.
    Since its inversion can become ill-conditioned, in \cref{algo:ASVGD} we instead invert $K^{k + 1} + \eps \id_N$ for some small $\eps > 0$.
    By \cref{example:S-WS}, this exactly corresponds to adding the Wasserstein metric regularization $\eps [G_{\cdot}^{(W)}]^{-1}$ to the Stein metric and adding $\eps V_t$ to the right side of \eqref{eq:Acc_Stein}.
    To make the resulting algorithm tractable, we ignore the corresponding $\eps$-terms in the $Y$-update.
    Note that for the generalized bilinear kernel $K + \eps \id$ is a low rank matrix, than can be easily pseudo-inverted by the Woodbury formula.
\end{remark}

\section{Non-accelerated and Accelerated Stein Gradient Flow with Generalized Bilinear Kernel on Gaussians} \label{sec:Bilinear_Analysis}

Let us now focus on the generalized bilinear kernel $K(x, y) \coloneqq x^{\tT} A y + 1$.
It is one of the most simple kernels and appears as first-order approximation of other standard kernels such as the (anisotropic) Gaussian kernel: $e^{- \frac{1}{2} (x - y)^{\tT} A (x - y)} \approx 1 + x^{\tT} A y + O(x^2, y^2)$.
For $A = \id_d$, the authors of \cite{LGBP2024} demonstrate that this kernel is important for Gaussian variational inference.
As a novel contribution, we generalize their results \cite[Thms.~3.1,~3.5]{LGBP2024} to arbitrary $A \in \Sym_+d(\R; d)$, by mimicking their proofs.
For this, we first prove that for a Gaussian target and initialization, the flow stays a Gaussian for all times.

For this, consider the inclusion map
\begin{equation*}
    \phi \colon \Theta \to \widetilde{\P}(\R^d), \qquad
    \theta \mapsto \rho_{\theta},
\end{equation*}
which is a smooth immersion \cite[Def.~I.4.4.8]{H1982} into the density manifold by \cite[Ex.~I.4.4.9]{H1982}.
We describe the pullback of the Stein metric onto the submanifold $\phi(\Theta) \subset \widetilde{\P}(\R^d)$ of Gaussian densities, defined on the tangent space at $\theta \in \Theta$ by
\begin{equation*}
    \tilde{g}_{\theta}
    \coloneqq (\phi^* g)_{\theta} \colon T_{\theta} \Theta \times T_{\theta} \Theta \to \R, \qquad 
    (\xi, \eta)
    \mapsto g_{\rho_{\theta}}(\d \phi_{\theta}(\xi), \d \phi_{\theta}(\eta)).
\end{equation*}

\begin{lemma}[Gaussian-Stein metric tensor] \label{lemma:Gaussian-Stein_metric_tensor}    
    Let $\theta = (\mu, \Sigma) \in \R^d \times \Sym_+(\R; d)$.
    For $(\tilde{\mu}_1, \tilde{\Sigma}_1), (\tilde{\mu}_2, \tilde{\Sigma}_2) \in T_{\theta} \Theta$ the pulled back Riemannian metric is
    \begin{equation*}
        \tilde{g}_{\theta}((\tilde{\mu}_1, \tilde{\Sigma}_1), (\tilde{\mu}_2, \tilde{\Sigma}_2))
        = \tr(S_1 S_2 \Sigma A \Sigma) + (b_1^{\tT} S_2 + b_2^{\tT} S_1) \Sigma A \mu + K(\mu, \mu) b_1^{\tT} b_2,
    \end{equation*}
    where
    \begin{equation*}
        (b_i, \frac{1}{2} S_i)
        = \tilde{G}_{\theta}(\tilde{\mu}_i, \tilde{\Sigma}_i), \qquad i \in \{ 1, 2 \},
    \end{equation*}
    and the tangent-cotangent automorphism associated to $\tilde{g}_{\theta}$ is
    \begin{equation} \label{eq:Stein-Gaussian_metric}
        \tilde{G}_{\theta}^{-1}
        \colon \R^d \times \Sym(d) \to \R^d \times \Sym(d), \qquad
        (\nu, S) \mapsto \begin{pmatrix}
            2 S \Sigma A \mu + K(\mu, \mu) \nu \\
            2 \Sym\left(\Sigma A (2 \Sigma S + \mu \nu^{\tT})\right) 
        \end{pmatrix}.        
    \end{equation}
\end{lemma}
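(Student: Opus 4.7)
My plan is to verify the lemma by direct computation, using the key observation that for the bilinear kernel $K(x,y) = x^{\tT} A y + 1$, the Stein cotangent corresponding to a Gaussian-submanifold tangent vector admits a \emph{quadratic} representative, so both sides can be computed by Gaussian-moment algebra. Given the cotangent $(b, \tfrac{1}{2} S) \in \R^d \times \Sym(d)$ on $\Theta$, I would take the centered ansatz $\Phi(y) = b^{\tT}(y-\mu) + \tfrac{1}{2}(y-\mu)^{\tT} S (y-\mu)$, so $\nabla \Phi(y) = b + S(y-\mu)$. Computing $\partial_\mu \E_{\rho_\theta}[\Phi]$ and $\partial_\Sigma \E_{\rho_\theta}[\Phi]$ at $\theta$ shows that under the pullback $\phi^*$ this $\Phi$ pairs with any tangent vector $(\tilde\mu', \tilde\Sigma') \in T_\theta\Theta$ as $b^{\tT} \tilde\mu' + \tfrac{1}{2} \tr(S \tilde\Sigma')$, so the ansatz really represents $(b, \tfrac{1}{2} S)$ in the Frobenius pairing convention.

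Next, I would plug the ansatz into $u(x) \coloneqq \int K(x,y)\rho_\theta(y)\nabla\Phi(y)\,\d y$ and simplify using $\E[y - \mu] = 0$ and $\E[(y-\mu)(y-\mu)^{\tT}] = \Sigma$; the centered ansatz causes the cross terms to collapse, yielding $u(x) = L x + c$ with $L = (b\mu^{\tT} + S\Sigma) A$ and $c = b$. Applying $\sigma = -\nabla \cdot (\rho_\theta u) = \rho_\theta \bigl[(x-\mu)^{\tT} \Sigma^{-1} u(x) - \tr(L)\bigr]$ and expanding in $(x-\mu)$ yields a density times a quadratic in $x - \mu$. Matching against the standard expansion of $\d\phi_\theta(\tilde\mu, \tilde\Sigma) / \rho_\theta$ by reading off the coefficients of $(x-\mu)(x-\mu)^{\tT}$ and of $(x-\mu)$ forces $\tilde\Sigma = L\Sigma + \Sigma L^{\tT}$ and $\tilde\mu = c + L\mu$; substituting $L$ and $c$ and using $1 + \mu^{\tT}A\mu = K(\mu,\mu)$ gives $\tilde\mu = K(\mu,\mu)\, b + S\Sigma A\mu$ and $\tilde\Sigma = 2\Sym(\Sigma A(\Sigma S + \mu b^{\tT}))$, which is exactly $\tilde G_\theta^{-1}(b, \tfrac{1}{2} S)$ in the lemma (via the substitution $\nu = b$ and the rescaled matrix argument).

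For the metric itself, I would use integration by parts to rewrite $g_{\rho_\theta}(\sigma_1, \sigma_2) = \iint K(x,y) \rho_\theta(x) \rho_\theta(y) \langle \nabla \Phi_1(y), \nabla \Phi_2(x)\rangle \,\d x \d y$, then expand $K = K(\mu,\mu) + (x - \mu)^{\tT} A \mu + \mu^{\tT} A (y - \mu) + (x - \mu)^{\tT} A (y - \mu)$ and $\langle \nabla \Phi_1, \nabla \Phi_2\rangle = b_1^{\tT} b_2 + b_1^{\tT} S_2 (x - \mu) + (y - \mu)^{\tT} S_1 b_2 + (y - \mu)^{\tT} S_1 S_2 (x - \mu)$. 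Independence of $X, Y \sim \rho_\theta$ together with the vanishing of first and third centered Gaussian moments kills twelve of the sixteen resulting cross-terms, leaving exactly $K(\mu,\mu) b_1^{\tT} b_2$, $b_1^{\tT} S_2 \Sigma A \mu$, $b_2^{\tT} S_1 \Sigma A \mu$, and $\tr(A \Sigma S_1 S_2 \Sigma) = \tr(S_1 S_2 \Sigma A \Sigma)$, which sum to the claimed $\tilde g_\theta$. The main obstacle is bookkeeping, especially the factor-$\tfrac{1}{2}$ identification of the cotangent: an uncentered ansatz $\Phi(y) = b^{\tT} y + \tfrac{1}{2} y^{\tT} S y$ also works but introduces spurious $S\mu\mu^{\tT}A\Sigma$ terms that must be absorbed via the reparameterization $b \mapsto b + S\mu$; centering at $\mu$ removes them from the outset and makes the matching of all three identities transparent.
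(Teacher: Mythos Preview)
Your proposal is correct and follows essentially the same approach as the paper: both take a quadratic ansatz for the Stein cotangent (the paper writes $\nabla\Xi_\theta(\nu,S)(y)=M_1(y-\mu)+m_2$, which is your $b+S(y-\mu)$), compute the kernelized velocity field by Gaussian moments, and match coefficients against $\d\phi_\theta$. The only cosmetic differences are that you compute $\tilde G_\theta^{-1}$ directly (cotangent $\to$ tangent), whereas the paper goes tangent $\to$ cotangent and reads off the inverse from the resulting Lyapunov-type relation, and that for the metric the paper first collapses the inner integral to $\Psi$ before pairing, while you expand the full double integral with $K$ centered at $(\mu,\mu)$; both routes give the same four surviving terms.
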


\begin{proof}
    Analogous to \cite[Thm.~A.3]{LGBP2024}, see \cref{subsec:proof_Gauss_Stein_tensor}.
\end{proof}

\begin{remark}[SVGD with time-dependent kernel recovers Wasserstein and natural gradient]
    In the zero-mean case, the automorphism \eqref{eq:Stein-Gaussian_metric} becomes
    \begin{equation*} 
        \tilde{G}_{\Sigma}^{-1}
        \colon \Sym(d) \to \Sym(d), \qquad
        S \mapsto 4 \Sym\left(\Sigma A \Sigma S\right).      
    \end{equation*}
    Formally, the time-dependent choice $A = \Sigma^{-1}$ recovers the Wasserstein case since then $\tilde{G}_{\Sigma}^{-1}(S) = 4 \Sym(\Sigma S)$, see \cite[Thm.~A.6]{LGBP2024}.
    Further, the time-dependent choice $A = \Sigma^{-2}$ yields $\tilde{G}_{\Sigma}^{-1}(S) = 4 S$ and thus natural gradient descent \cite{CL2020}.

    This motivates searching for a fixed matrix $A \in \Sym_+(d)$ for which the convergence of the Stein metric gradient flow with generalized bilinear kernel is the best.
\end{remark}

\begin{theorem}[Preservation of Gaussians] \label{thm:3.1}
    Let $\rho_0 \sim \NN(\mu_0, \Sigma_0), \rho^* \sim \NN(b, Q)$ be Gaussian distributions, with means $\mu_0, b \in \R^d$ and covariance matrices $\Sigma_0, Q \in \Sym_+(d)$.
    Further, let $(\rho_t)_{t \ge 0}$ be the Stein gradient flow with respect to the generalized bilinear kernel, of $\KL(\cdot \mid \rho^*)$, starting at $\rho_0$.
    \begin{enumerate}
        \item 
        Then, $\rho_t \sim \NN(\mu_t, \Sigma_t)$ for all $t \ge 0$, where
        \begin{equation} \label{eq:Stein_gradient_flow_Gaussians}
            \begin{cases}
                \dot{\mu}_t
                = (\id - Q^{-1} \Sigma_t) A \mu_t
                - K(\mu_t, \mu_t) Q^{-1}(\mu_t - b), \\
                \dot{\Sigma}_t
                = 2 \Sym(\Sigma_t A)
                - 2 \Sym\left(\Sigma_t A \big( \Sigma_t + \mu_t (\mu_t - b)^{\tT} \big) Q^{-1}\right).
            \end{cases}
        \end{equation}

        \item 
        For any $\mu_0 \in \R^d$ and $\Sigma_0 \in \Sym_+(d)$, the system \eqref{eq:Stein_gradient_flow_Gaussians} has a unique solution on $[0, \infty)$.
        For $t \to \infty$ we have $\rho_t \weak \rho^*$ and $\| \mu_t - b \|, \| \Sigma_t - Q \| \in O\left(\exp\left( - 2 (\gamma - \eps) t\right)\right)$ for any $\eps > 0$, where
        \begin{equation*}
            \gamma 
            \coloneqq \lambda_{\min}\left(
                \begin{pmatrix}
                A \otimes \id_{d \times d} & \frac{1}{\sqrt{2}} (A b) \otimes Q^{-\frac{1}{2}} \\
                \frac{1}{\sqrt{2}} (b^{\tT} A) \otimes Q^{-\frac{1}{2}} & \frac{1}{2} K(b, b) Q^{-1}
            \end{pmatrix} \right)
            > \frac{1}{K(b, b) + 2 \lambda_{\min}(A) \lambda_{\max}(Q)}.
        \end{equation*}

        \item 
        If $\mu_0 = b = 0$, then \eqref{eq:Stein_gradient_flow_Gaussians} reduces to
        \begin{equation} \label{eq:centered_nonacc_Stein_GF}
            \dot{\Sigma}_t
            = 2 \Sym\big(\Sigma_t A (\id - \Sigma_t Q^{-1})\big),
        \end{equation}
        which we call the {\em Stein variational gradient Lyapunov equation}.
        
        If additionally $\Sigma_0 Q = Q \Sigma_0$ and $A$ is a diagonal matrix with $A V = V A$, where $V$ simultaneously diagonalizes $\Sigma_0$ and $Q$, then $\Sigma_t Q = Q \Sigma_t$ for all $t \ge 0$ and thus we have the closed form solution $\Sigma_t^{-1} = Q^{-1} + e^{-2 t A} \big(\Sigma_0^{-1} - Q^{-1}\big)$.

        \item 
        Now, let $(\rho_t, \Phi_t)_{t > 0}$ be the accelerated Stein gradient flow.
        Then, $\rho_t \sim \NN(\mu_t, \Sigma_t)$, where
        \begin{equation} \label{eq:ASVGD_Gaussians}
            \begin{cases}
                \dot{\mu}_t
                = 2 S_t \Sigma_t A \mu_t + (\mu_t^{\tT} A \mu_t + 1) \nu_t, & \mu_t|_{t = 0} = \mu_0, \\
                \dot{\Sigma}_t
                = \nu_t \mu_t^{\tT} A \Sigma_t 
                + \Sym(\Sigma_t A (2 \Sigma_t S_t + \mu_t \nu_t^{\tT}))
                + 2 \Sym(\Sigma_t A \Sigma_t S_t), & \Sigma_t|_{t = 0} = \Sigma_0, \\
                \dot{\nu}_t
                = - \alpha_t \nu_t - 2 A \Sigma_t S_t \nu_t - A \mu_t \| \nu_t \|_2^2 - Q^{-1} (\mu_t - b), & \nu_0 = 0, \\
                \dot{S}_t
                = - \alpha_t S_t - 2 S_t \nu_t \mu_t^{\tT} A - 4 \Sym(S_t^2 \Sigma_t A) - \frac{1}{2} (Q^{-1} - \Sigma_t^{-1}), & S_0 = 0.
            \end{cases}
        \end{equation}
    \end{enumerate}
\end{theorem}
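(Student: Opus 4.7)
The plan is to exploit the inclusion $\phi \colon \Theta \hookrightarrow \widetilde{\P}(\R^d)$ of Gaussian densities to reduce these infinite-dimensional PDEs to ODEs in the finite-dimensional parameters. The central enabler is \cref{lemma:Gaussian-Stein_metric_tensor}, which furnishes the pulled-back metric and the tangent-cotangent automorphism $\tilde G_\theta^{-1}$; combined with the explicit KL gradients $\partial_\mu \tilde E(\mu, \Sigma) = Q^{-1}(\mu - b)$ and $\partial_\Sigma \tilde E(\mu, \Sigma) = \frac{1}{2}(Q^{-1} - \Sigma^{-1})$, each ODE system will follow as soon as invariance of the Gaussian submanifold is established.

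\emph{Parts 1 and 3.} I would first argue invariance via the particle picture: by \cref{remark:Simplied_Energy_Term_KL} the Stein drift $v(x) = -\int K(x, y) \nabla \delta E(\rho)(y) \rho(y) \d y$ reduces to $\int [\nabla_2 K(x, y) - K(x, y) Q^{-1}(y - b)] \rho(y) \d y$. For the bilinear $K(x, y) = x^{\tT} A y + 1$ and Gaussian $\rho$, expanding in powers of $y$ and applying the Gaussian moment formulas $\int y \rho = \mu$, $\int y y^{\tT} \rho = \Sigma + \mu\mu^{\tT}$ shows that $v(x)$ is affine in $x$. Since affine drifts push Gaussians to Gaussians (for instance via the characteristic function), $\rho_t$ stays Gaussian, and applying $-\tilde G_\theta^{-1}$ through \eqref{eq:Stein-Gaussian_metric} reproduces \eqref{eq:Stein_gradient_flow_Gaussians}. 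For Part 3, substituting $\mu_0 = b = 0$ into the $\mu$-ODE gives $\dot\mu_t \equiv 0$, so the $\Sigma$-ODE collapses to \eqref{eq:centered_nonacc_Stein_GF}. If $\Sigma_0, Q$ commute and $A$ is diagonal in their joint eigenbasis, then by uniqueness $\Sigma_t$ remains in the commutative subalgebra, so substituting $P_t \coloneqq \Sigma_t^{-1}$ turns \eqref{eq:centered_nonacc_Stein_GF} into the linear ODE $\dot P_t = -2 A (P_t - Q^{-1})$, whose solution is the claimed exponential.

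\emph{Parts 2 and 4.} For Part 2, local existence/uniqueness come from local Lipschitzness of the right-hand side of \eqref{eq:Stein_gradient_flow_Gaussians}; global existence on $[0, \infty)$ follows by controlling $\ln \det \Sigma_t$ to keep $\Sigma_t \in \Sym_+(\R; d)$. For the rate, I would linearize around the equilibrium $(b, Q)$: the Jacobian equals $-\tilde G_{(b, Q)}^{-1} \Hess \tilde E(b, Q)$, which in the $\vec$-representation becomes $-2 M$, with $M$ the symmetric block matrix appearing in the statement of $\gamma$. Exponential stability at rate $2(\gamma - \eps)$ then follows from standard linearization theory, and the lower bound on $\lambda_{\min}(M)$ from a Schur-complement identity together with the elementary estimate $\|A b\|^2 \le \lambda_{\max}(A) (b^{\tT} A b)$. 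For Part 4, I would make the ansatz $\rho_t = \NN(\mu_t, \Sigma_t)$ together with $\Phi_t$ quadratic in $x$ (linear coefficient $\nu_t$, quadratic coefficient encoded by $S_t$), verify invariance of this submanifold under \eqref{eq:Accelerated_Stein_Flow} by polynomial-degree counting, and read off $\dot\mu_t, \dot\Sigma_t$ from the $\rho$-equation exactly as in Part 1. The $\Phi$-equation is then a polynomial identity in $x$ whose constant/linear/quadratic coefficients give ODEs for $\nu_t$ and $S_t$: the damping $-\alpha_t \Phi_t$ contributes $-\alpha_t (\nu_t, S_t)$, the interaction integral contributes the quadratic-in-$(\nu, S)$ terms, and $\delta E(\rho_t)$ furnishes $Q^{-1}(\mu_t - b)$ and $\frac{1}{2}(Q^{-1} - \Sigma_t^{-1})$. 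The main obstacle will be the bookkeeping of the interaction integral and the non-symmetric $\nu_t \mu_t^{\tT} A \Sigma_t$ contribution to $\dot\Sigma_t$ arising from the $\rho$-variation of the metric tensor in the Hamiltonian structure \eqref{eq:acc_Hamiltonian_flow}, ensuring term-by-term agreement with \eqref{eq:ASVGD_Gaussians}.
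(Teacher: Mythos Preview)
Your treatment of Parts~1, 3 and 4 is essentially the paper's: invariance of the Gaussian family under the bilinear-kernel drift plus the formula \eqref{eq:Stein-Gaussian_metric} gives \eqref{eq:Stein_gradient_flow_Gaussians}; simultaneous diagonalization reduces \eqref{eq:centered_nonacc_Stein_GF} to a logistic ODE; and for the accelerated flow the paper, like you, restricts the Hamiltonian to the Gaussian submanifold (quadratic $\Phi_t$) and reads off the four ODEs by differentiating.

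Part~2, however, has a real gap. Linearizing at $(b,Q)$ gives only \emph{local} exponential stability; it says nothing about trajectories starting at an arbitrary $(\mu_0,\Sigma_0)$, and the theorem asserts both global weak convergence $\rho_t\weak\rho^*$ and the asymptotic rate. The paper does not linearize here. Instead it uses $\tilde E=\KL$ itself as a global Lyapunov function: one computes $\dot{\tilde E}(\mu_t,\Sigma_t)$ and rewrites it as $-\tr\!\big((B_t+C_t)^{\tT}(B_t+C_t)\big)-\|\eta_t\|^2\le 0$ with $B_t=(Q^{-1}-\Sigma_t^{-1})\Sigma_t A^{1/2}$, $C_t=Q^{-1}(\mu_t-b)\mu_t^{\tT}A^{1/2}$, $\eta_t=Q^{-1}(\mu_t-b)$. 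Uniform continuity plus Barb\u{a}lat's lemma then force $(\mu_t,\Sigma_t)\to(b,Q)$ globally. For the rate one expresses both $-\dot{\tilde E}$ and $\tilde E$ as quadratic forms in $(\vec B_t,\eta_t)$, conjugates by $\diag(I,(2Q)^{-1/2})$ to produce the symmetric block matrix in the statement of $\gamma$, and concludes $-\dot{\tilde E}\ge 4(\gamma-\eps)\tilde E$ via Gr\"onwall. Your assertion that ``the Jacobian in the $\vec$-representation becomes $-2M$'' is not correct as written: the Jacobian $\tilde G_{(b,Q)}^{-1}\Hess\tilde E(b,Q)$ is not symmetric and is not equal to $2M$ (compare the eigenvalues $\frac{q_ia_i}{q_j}+\frac{q_ja_j}{q_i}$ of the linearized block $Q^{-1}\oplus(QA)$ in \cref{lemma:optimalA_1D} with the eigenvalues $2a_i$ of $2(A\otimes I)$). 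It is true that their \emph{minimal} eigenvalues coincide, but establishing this requires exactly the change of variables to $(\vec B_t,\eta_t)$ that the Lyapunov computation performs; it is not a free consequence of ``standard linearization theory''. The lower bound on $\gamma$ is also not a one-line Schur complement: the paper optimizes over a one-parameter family $B_u$ of positive semidefinite minorants.
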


\begin{proof}
    Analogous to the proofs of \cite[Thms.~3.1,~3.5]{LGBP2024}, see \cref{subsec:Thm3.1_Proof}.
\end{proof}

\begin{remark}
    The first statement from \cref{thm:3.1} also holds without the affine term $+1$ in the definition of the kernel.
\end{remark}

\begin{remark}[Comparison to Wasserstein geometry]
    For the Wasserstein-2 gradient flow of $\KL(\cdot \mid \rho^*)$ we instead have
    \begin{equation} \label{eq:WGF_KL_Gaussians_centered}
        \dot{\Sigma}_t = 2 \id_d - 2 \Sym(\Sigma_t Q^{-1}),    
    \end{equation}
    in the centered case, so compared to \eqref{eq:centered_nonacc_Stein_GF}, every summand has one factor $\Sigma_t$ less.
    By \cite{WL2022}, the accelerated version of \eqref{eq:WGF_KL_Gaussians_centered} is 
    \begin{equation*}
        \begin{cases}
            \dot{\Sigma}_t = 4 \Sym(S_t \Sigma_t), \\
            \dot{S}_t = - \alpha_t S_t - 2 S_t^2 + \frac{1}{2}(\Sigma_t^{-1} - Q^{-1}).
        \end{cases}
    \end{equation*}%
\end{remark}

Lastly, note that \eqref{eq:centered_nonacc_Stein_GF} is more complicated than the GAUL equation \cite[Eq.~(3.6)]{ZOL2024}, which is linear in $\Sigma_t$, while our right-hand side is quadratic.

\subsection{Optimal selection of the kernel and damping parameters for the generalized bilinear kernel}
Under the assumptions of \cref{thm:3.1} 3., the solution of \eqref{eq:centered_nonacc_Stein_GF} fulfills $\| \Sigma_t^{-1} - Q^{-1} \| \le \| e^{-2 A t} \| \| \Sigma_0^{-1} - Q^{-1} \|$, so one could assume that $A \to \infty$ would give the fastest convergence rate.
However, when discretizing in time, one should balance this observation with the numerical stability of the algorithm, which can be approximated by the condition number of the associated linearized ODE system.
This is made precise in the following remark.
\begin{remark}[Convergence rates of explicit Euler discretization of linear dynamical systems] \label{remark:convergence_rate_kappa}
    In both of the special cases discussed in \cref{lemma:optimalA_1D,thm:optimal_damping} below, linearizing the gradient flow \eqref{eq:Stein_gradient_flow_Gaussians} at equilibrium yields a dynamical system $\dot{x}(t) = - B x(t)$ for some non-normal, but diagonalizable matrix $B = V \Lambda V^{-1}$ with positive eigenvalues, where $x(t) = \big(\mu_t, \vec(\Sigma_t)\big)$ or $x(t) = \big(\mu_t, \vec(\Sigma_t), \nu_t, \vec(S_t)\big)$.
    In continuous time, we have the estimate $\| x(t) \|^2 \le e^{-2 s(B) t} \| x(0) \|^2$, where $s(B) \coloneqq \min_{\lambda \in \sigma(B)} \Re(\lambda)$ is the \emph{spectral abscissa} of $-B$.
    
    The explicit Euler discretization in time with step size $h > 0$ is $x^k = (I - h B)^k x^0$.
    We have
    \begin{equation*}
        \| (I - h B)^k \|_2
        \le \| V \|_2 \| V^{-1} \|_2 \| (I - h \Lambda)^k \|_2
        \le \rho(B)^k \kappa(V),  
    \end{equation*}
    where $\rho(B) \coloneqq \max_{\lambda \in \sigma(B)} | 1 - h \lambda |$ is the \emph{spectral radius} of $B$ and $\kappa(V) \coloneqq \| V \|_2 \| V^{-1} \|_2$ is the \emph{condition number} of $V$.

    While the factor $\kappa(V)$ can be large, the \emph{asymptotic} convergence rate for $k \to \infty$ is governed by the spectral radius.
    If we choose a sufficiently small step size, parametrized as $h = \frac{\alpha}{\lambda_{\max}(B)}$ for some $\alpha \in (0, 1]$, then $\rho(B) = 1 - \alpha\left(\kappa(B)\right)^{-1}$, where $\kappa(B) \coloneqq \frac{\lambda_{\max}(B)}{\lambda_{\min}(B)}$.
    Hence, in order to get the best \emph{asymptotic} convergence rate, we minimize $\rho^*$ and thus try to get $\kappa(B) \in [1, \infty)$ to be as small as possible. 
\end{remark}

In simple cases, we can explicitly compute an optimal choice of $A$ for the linearized system.
As recounted in 

\begin{lemma}[Optimal $A$ for linearized SVGD with bilinear kernel] \label{lemma:optimalA_1D}
    \begin{enumerate}
        \item 
        For $d = 1$,
        the scaling $A$ minimizing the condition number of the system matrix of the linearized version of the Stein gradient flow in the Gaussian submanifold, \eqref{eq:Stein_gradient_flow_Gaussians},
        is given by
        \begin{equation*}
            A = \frac{1}{2 Q + b^2}
        \end{equation*}
        and the associated optimal step size is $h^* = Q > 0$.
        If furthermore we have $b = 0$, then we obtain the continuous time convergence rate $\| (\mu_t, \Sigma_t) \| \le e^{- \frac{1}{Q} t} \| (\mu_0, \Sigma_0)\|$.
    
        \item 
        If $A$ and $Q$ are simultaneously diagonalizable and $\mu_0 = b = 0$, then the optimal $A$ (in the above sense) is $A = \frac{1}{2} Q^{-1}$.
    \end{enumerate}
\end{lemma}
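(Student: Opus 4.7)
The plan is to linearize the Gaussian Stein gradient flow \eqref{eq:Stein_gradient_flow_Gaussians} around its unique equilibrium $(\mu,\Sigma) = (b,Q)$ and apply the spectral framework of \cref{remark:convergence_rate_kappa}: minimize the condition number $\kappa(B)$ of $B \coloneqq -J$, where $J$ is the Jacobian of the right-hand side at equilibrium.

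For Part 1, treating $\mu,\Sigma,A,Q,b$ as scalars, I compute
\[
B = \begin{pmatrix} (Ab^2+1)/Q & Ab/Q \\ 2Ab & 2A \end{pmatrix}, \qquad
\tr B = (Ab^2+1)/Q + 2A, \qquad \det B = 2A/Q.
\]
Completing the square gives $(\tr B)^2 - 4\det B = ((Ab^2+1)/Q - 2A)^2 + 8A^2b^2/Q \ge 0$, so both eigenvalues of $B$ are real and positive. Using the elementary identity $(\tr B)^2/\det B = \kappa(B) + \kappa(B)^{-1} + 2$, minimizing $\kappa(B)$ is equivalent to minimizing $A \mapsto (A(b^2+2Q)+1)^2/(2AQ)$ on $(0,\infty)$; after the substitution $w \coloneqq A(b^2+2Q)$, AM-GM (or a first-order condition) gives $w^\ast = 1$, i.e., $A^\ast = 1/(b^2+2Q)$. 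Plugging back yields $\tr B = 2/Q$, $\det B = 2/(Q(b^2+2Q))$, whence $\lambda_\pm = 1/Q \pm b/(Q\sqrt{b^2+2Q})$. For $b=0$ both eigenvalues collapse to $1/Q$, so $h^\ast = 1/\lambda_{\max}(B) = Q$ and, since $B = Q^{-1}\id_2$ in this case, the continuous-time rate $\|(\mu_t,\Sigma_t)\| \le e^{-t/Q}\|(\mu_0,\Sigma_0)\|$ is immediate.

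For Part 2, the assumption $\mu_0 = b = 0$ together with uniqueness from \cref{thm:3.1} forces $\mu_t \equiv 0$, so the equilibrium is again $(0,Q)$. At this point the cross-partial $\partial_\Sigma \dot\mu$ vanishes (the prefactor $\mu$ kills it), so the $\mu$- and $\Sigma$-blocks of $B$ decouple: the $\mu$-block is exactly $Q^{-1}$, contributing eigenvalues $\{1/Q_i\}_{i=1}^d$, and the $\Sigma$-block acts on $\Sym(\R;d)$ by $\tilde\Sigma \mapsto 2\Sym(QA\tilde\Sigma Q^{-1})$. Writing everything in the joint eigenbasis of $A$ and $Q$, this latter operator is diagonal in the natural basis of $\Sym(\R;d)$ with eigenvalues $A_iQ_i/Q_j + A_jQ_j/Q_i$ on the symmetric basis element indexed by $(i,j)$, $i \le j$, so the full spectrum of $B$ is
\[
\sigma(B) = \{1/Q_i\}_{i=1}^d \cup \{2A_i\}_{i=1}^d \cup \{A_iQ_i/Q_j + A_jQ_j/Q_i\}_{i<j}.
\]
Because $\{1/Q_i\}_i \subset \sigma(B)$ irrespective of $A$, the bound $\kappa(B) \ge Q_{\max}/Q_{\min} = \kappa(Q)$ is unconditional. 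For $A = Q^{-1}/2$, so $A_i = 1/(2Q_i)$, the remaining eigenvalues become $2A_i = 1/Q_i$ and the arithmetic mean $(1/Q_i + 1/Q_j)/2$, all of which lie in $[1/Q_{\max},1/Q_{\min}]$. Hence the lower bound is attained and $A = Q^{-1}/2$ is optimal.

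The main technical hurdle will be the careful linearization of the matrix-valued $\Sigma$-ODE: identifying the componentwise action of $\Sym(QA\,\cdot\,Q^{-1})$ on $\Sym(\R;d)$ in the joint eigenbasis (where the symmetrization collapses to the average of two diagonal scalings), and verifying that the $\mu$- and $\Sigma$-blocks decouple at $(0,Q)$ precisely because $b=0$. Once these are handled, Part 1 is a scalar optimization of $(\tr B)^2/\det B$, and Part 2 follows from the observation that $\{1/Q_i\}$ is an inescapable subset of $\sigma(B)$.
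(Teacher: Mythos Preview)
Your proposal is correct and follows the same linearization-at-equilibrium strategy as the paper, with two presentational differences worth recording.

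In Part~1, your use of the identity $(\tr B)^2/\det B = \kappa(B)+\kappa(B)^{-1}+2$ together with the AM--GM substitution $w=A(b^2+2Q)$ is cleaner than the paper's route, which writes out $\lambda_\pm$ explicitly and differentiates the ratio $\lambda_+/\lambda_-$ in $A$. One small gap: the lemma asserts $h^\ast=Q$ for \emph{general} $b$, but you only derive this for $b=0$ via $h^\ast=1/\lambda_{\max}$. The paper's proof uses the Richardson-optimal step size $h^\ast=2/(\lambda_{\min}+\lambda_{\max})=2/\tr B$, and since you already computed $\tr B=2/Q$ at $A^\ast$ for arbitrary $b$, the general claim follows immediately from your own numbers; you just need to invoke that formula rather than $1/\lambda_{\max}$.

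In Part~2, the paper vectorizes $\Sigma$ to $\R^{d^2}$ and writes the $\Sigma$-block as the Kronecker sum $Q^{-1}\oplus(QA)$, whereas you work intrinsically on $\Sym(\R;d)$ and diagonalize $\tilde\Sigma\mapsto 2\Sym(QA\tilde\Sigma Q^{-1})$ in the joint eigenbasis. The resulting spectrum $\{1/Q_i\}\cup\{A_iQ_i/Q_j+A_jQ_j/Q_i\}$ and the lower bound $\kappa(B)\ge\kappa(Q)$ are identical, and your verification that $A=\tfrac{1}{2}Q^{-1}$ saturates this bound matches the paper's.
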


\begin{proof}
    See \cref{subsec:ProofLemma5.1}.
\end{proof}

As our main contribution, we now prove which choice of constant damping parameter yields the best asymptotic convergence rate.
In contrast to the non-accelerated SVGD, the best $A$ does not depend on $Q$.

\begin{theorem}[Asymptotically optimal damping constant for centered linearized ASVGD] \label{thm:optimal_damping}
        If $A$ and $Q$ commute and $\mu_0 = b = 0$, then the jointly optimal (in the sense of \cref{remark:convergence_rate_kappa}) damping parameter $\alpha_t \equiv \alpha$ and matrix $A$ yielding the optimal asymptotic continuous time rate are given by
        \begin{equation*}
            \alpha
            = \sqrt{8 \lambda_{\min}(A)},
        \end{equation*}
        and $A = \theta \id_{d \times d}$ for some $\theta > 0$.
        The continuous-time convergence rate is $\| (\Sigma_t, S_t) \| \le e^{-\sqrt{8 A} t} \| (\Sigma_0, S_0) \|$ and the optimal discrete-time convergence rate is
        \begin{equation*}
            \| (\Sigma_k, S_k) \|_2 \le \rho^k \kappa(V) \| (\Sigma_0, S_0) \|_2,    
        \end{equation*}
        where $A = V D_A V^{\tT}$ is the orthogonal diagonalization of $A$, and
        \begin{equation*}
            \rho
            \coloneqq \frac{\sqrt{\frac{1}{2}\left(\kappa(Q) + \frac{1}{\kappa(Q)}\right)} - 1}{\sqrt{\frac{1}{2}\left(\kappa(Q) + \frac{1}{\kappa(Q)}\right)} + 1}
            < \frac{\sqrt{\kappa(Q)} - 1}{\sqrt{\kappa(Q)} + 1},
        \end{equation*}
        when using the optimal step size
        \begin{equation*}
            h^*
            \coloneqq \frac{2}{\sqrt{\max\limits_{1 \le i, j \le d} \mu_{i, j}} + \sqrt{2 \lambda_{\min}(A)}}.
        \end{equation*}
\end{theorem}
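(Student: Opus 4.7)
My plan is to reduce the ASVGD system near equilibrium to a linear system, decouple it entry-wise via simultaneous diagonalization of $A$ and $Q$, and then apply classical spectral optimization---critical damping in continuous time, Polyak/Chebyshev-style balancing in discrete time---to read off $\alpha$, $h^\ast$, and $\rho$. Substituting $\mu_0 = \nu_0 = 0$ into \eqref{eq:ASVGD_Gaussians} makes the right-hand sides of the $\mu$- and $\nu$-equations vanish identically, so $\mu_t \equiv 0$ and $\nu_t \equiv 0$ are preserved, leaving
\begin{equation*}
    \dot\Sigma_t = 4\Sym(\Sigma_t A \Sigma_t S_t), \qquad \dot S_t = -\alpha S_t - 4\Sym(S_t^2 \Sigma_t A) - \tfrac{1}{2}\bigl(Q^{-1} - \Sigma_t^{-1}\bigr).
\end{equation*}
The unique equilibrium is $(\Sigma_\ast, S_\ast) = (Q, 0)$; setting $\tilde\Sigma := \Sigma - Q$ and using $\Sigma^{-1} \approx Q^{-1} - Q^{-1}\tilde\Sigma Q^{-1}$, the first-order linearization reads $\dot{\tilde\Sigma}_t = 4\Sym(QAQ\, S_t)$ and $\dot S_t = -\alpha S_t - \tfrac{1}{2} Q^{-1} \tilde\Sigma_t Q^{-1}$.

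Since $A$ and $Q$ commute they admit a joint orthogonal eigendecomposition $A = V D_A V^{\tT}$, $Q = V D_Q V^{\tT}$. The congruence $\Sigma' := V^{\tT} \tilde\Sigma V$, $S' := V^{\tT} S V$ diagonalizes the relevant matrix products, and since $D_Q$ and $D_A$ are diagonal the linearized system decouples into independent planar blocks indexed by $(i, j)$:
\begin{equation*}
    \frac{d}{dt}\begin{pmatrix}\Sigma'_{ij}\\ S'_{ij}\end{pmatrix} = M_{ij} \begin{pmatrix}\Sigma'_{ij}\\ S'_{ij}\end{pmatrix}, \qquad M_{ij} := \begin{pmatrix} 0 & \beta_{ij} \\ -\gamma_{ij} & -\alpha \end{pmatrix},
\end{equation*}
with $\beta_{ij} := 2(d_{Q,i}^2 d_{A,i} + d_{Q,j}^2 d_{A,j})$, $\gamma_{ij} := (2 d_{Q,i} d_{Q,j})^{-1}$, and eigenvalues $\lambda^\pm_{ij} = \tfrac{1}{2}\bigl(-\alpha \pm \sqrt{\alpha^2 - 4 \mu_{ij}}\bigr)$, where $\mu_{ij} := \beta_{ij} \gamma_{ij}$.

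For the continuous-time rate, I observe that in the underdamped regime $\alpha^2 < 4 \min_{ij} \mu_{ij}$ every block's spectral abscissa equals $-\alpha/2$, so I maximize $\alpha$ subject to this constraint, obtaining $\alpha = 2\sqrt{\min_{ij}\mu_{ij}}$. A direct computation gives $\mu_{ii} = 2 d_{A,i}$, and AM-GM on $d_{Q,i}^2 d_{A,i} + d_{Q,j}^2 d_{A,j} \geq 2 d_{Q,i} d_{Q,j} \sqrt{d_{A,i} d_{A,j}}$ shows $\mu_{ij} \geq 2\sqrt{d_{A,i} d_{A,j}} \geq 2\lambda_{\min}(A)$, so $\min_{ij} \mu_{ij} = 2\lambda_{\min}(A)$ and $\alpha = \sqrt{8\lambda_{\min}(A)}$.

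For the discrete-time rate I apply explicit Euler, sending $\lambda^\pm_{ij}$ to $1 + h\lambda^\pm_{ij}$, whose squared modulus in the underdamped regime equals $1 - h\alpha + h^2 \mu_{ij}$. The worst spectral radius over modes is $\max\bigl(|1 - h\alpha/2|,\ \sqrt{1 - h\alpha + h^2 \mu_{\max}}\bigr)$; balancing the two quantities as in the classical heavy-ball analysis yields $h^\ast = 2/(\sqrt{\mu_{\max}} + \sqrt{2\lambda_{\min}(A)})$ and the convergence rate $\rho = (\sqrt{\mu_{\max}/\mu_{\min}} - 1)/(\sqrt{\mu_{\max}/\mu_{\min}} + 1)$. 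A case analysis shows $\mu_{\max}/\mu_{\min}$ is minimized over $A$ commuting with $Q$ by $A = \theta \id$ (any $\theta > 0$), yielding the ratio $\tfrac{1}{2}(\kappa(Q) + 1/\kappa(Q))$; any non-scalar $A$ spreads the diagonal values $\mu_{ii} = 2 d_{A,i}$ apart and inflates the ratio. The factor $\kappa(V)$ in the discrete bound arises from non-normality of the assembled block-diagonal system, via \cref{remark:convergence_rate_kappa}. The main obstacle I expect is the joint minimization over $(A, \alpha, h)$: enlarging $\lambda_{\min}(A)$ improves $\alpha$ but can worsen $\mu_{\max}/\mu_{\min}$, so establishing simultaneous Pareto-optimality of $A = \theta\id$ requires careful bookkeeping, as does handling the critically-damped boundary mode (where the two summands in the max are equal) correctly in the Euler estimate.
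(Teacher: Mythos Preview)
Your overall strategy---linearize at the equilibrium $(\Sigma,S)=(Q,0)$, simultaneously diagonalize via the common eigenbasis $V$ of $A$ and $Q$, reduce to $2\times 2$ blocks with characteristic products $\mu_{ij}=\frac{q_i}{q_j}a_i+\frac{q_j}{q_i}a_j$, and choose $\alpha$ as the critical damping of the slowest mode---is exactly the paper's approach. Your reduction is in fact slightly cleaner: you observe that $\mu_t\equiv 0,\ \nu_t\equiv 0$ are exact invariants of the nonlinear system when $b=\mu_0=0$, whereas the paper linearizes the full $(\mu,\Sigma,\nu,S)$ system first and then strikes the $\mu$- and $\nu$-rows/columns. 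Your AM--GM argument showing $\min_{ij}\mu_{ij}=2\lambda_{\min}(A)$ is also more explicit than what the paper writes.

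The one concrete gap is your discrete-time balancing step. With $\alpha=2\sqrt{\mu_{\min}}$ fixed, every mode is (under- or critically-)damped and the Euler spectral radius of the $(i,j)$-block is $\sqrt{1-h\alpha+h^2\mu_{ij}}$; in particular the critically-damped mode gives exactly $|1-h\alpha/2|=\sqrt{1-h\alpha+h^2\mu_{\min}}$. Equating your two quantities therefore yields $h^2\alpha^2/4=h^2\mu_{\max}$, i.e.\ $\mu_{\max}=\mu_{\min}$, which is independent of $h$ and does not determine $h^\ast$. The paper does not balance these two; instead it observes that $|\lambda_{ij}^\pm|=\sqrt{\mu_{ij}}$ in the underdamped regime, defines $\tilde\kappa(B)=\max|\lambda|/\min|\lambda|=\sqrt{\mu_{\max}/\mu_{\min}}$, and then simply applies the real-eigenvalue formulae from \cref{remark:convergence_rate_kappa}, namely $h^\ast=2/(\max|\lambda|+\min|\lambda|)$ and $\rho=(\tilde\kappa-1)/(\tilde\kappa+1)$. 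This is how the stated $h^\ast$ and $\rho$ are obtained; the subsequent minimization of $\tilde\kappa$ over $A$ (giving $A=\theta\,\id$ and $\tilde\kappa=\sqrt{\tfrac12(\kappa(Q)+\kappa(Q)^{-1})}$) then matches your argument.
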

\begin{proof}
    See \cref{subsec:proof_optimal_damping}.
\end{proof}

\section{Numerical results} \label{sec:numerics}
\subsection{Toy problems}

We choose $\Omega = \R^2$ and compare our algorithm, ASVGD\footnote{The Python code for reproducing these experiments is available online: \url{https://github.com/ViktorAJStein/Accelerated_Stein_Variational_Gradient_Flows}.}, with ULA, SVGD, and MALA (Metropolis-adjusted Langevin algorithm), which augments ULA by a Metropolis-Hastings acceptance step to remove its asymptotic bias \cite{B1994}.
Since our method is of second order in time, we also compare it to underdamped Langevin dynamics (ULD) with unit mass and unit friction.

Note that there are other, unbiased alternatives for sampling, not even requiring differentiability of the potential, like the proximal sampler \cite{LST2021}.
However, the proximal sampler is not a sequential particle method.

\paragraph{Setup.}
In all our plots, the initial particles are blue circles, the red squares are the final particles, and the colored lines represent the trajectories between them.
The black lines represent the level lines of the target density $f$.
In all experiments, we choose $N = 500$ particles and perform 1000 steps with step size, bandwidth, and regularization parameter all equal to $\tau = \sigma = \eps = 0.1$.
If not stated otherwise, we always use the gradient restart and the speed restart from \cite{OC2015,SBC2016} with damping parameter $\alpha_k = \frac{k}{k + 3}$.
Throughout, we observe that ASVGD converges faster than SVGD.

\paragraph{Bilinear kernel.}
For this kernel, we only consider Gaussian targets, since from \cref{thm:3.1} we know that when initializing with a Gaussian distribution, the distribution will remain Gaussian.
On the other hand, we observe that, as supported by the proof of \cref{thm:3.1}, for arbitrary target densities, the overall shape of the particles is always approximately a linear transformation of the initial particles, which is in line with our theoretical analysis of the (co)tangent space.

\figref{fig:bilinear_KL} shows that ASVGD recovers Gaussian targets much more rapidly than the other sampling algorithms.
\begin{figure}[H]
    \centering
    \includegraphics[width=.25\textwidth]{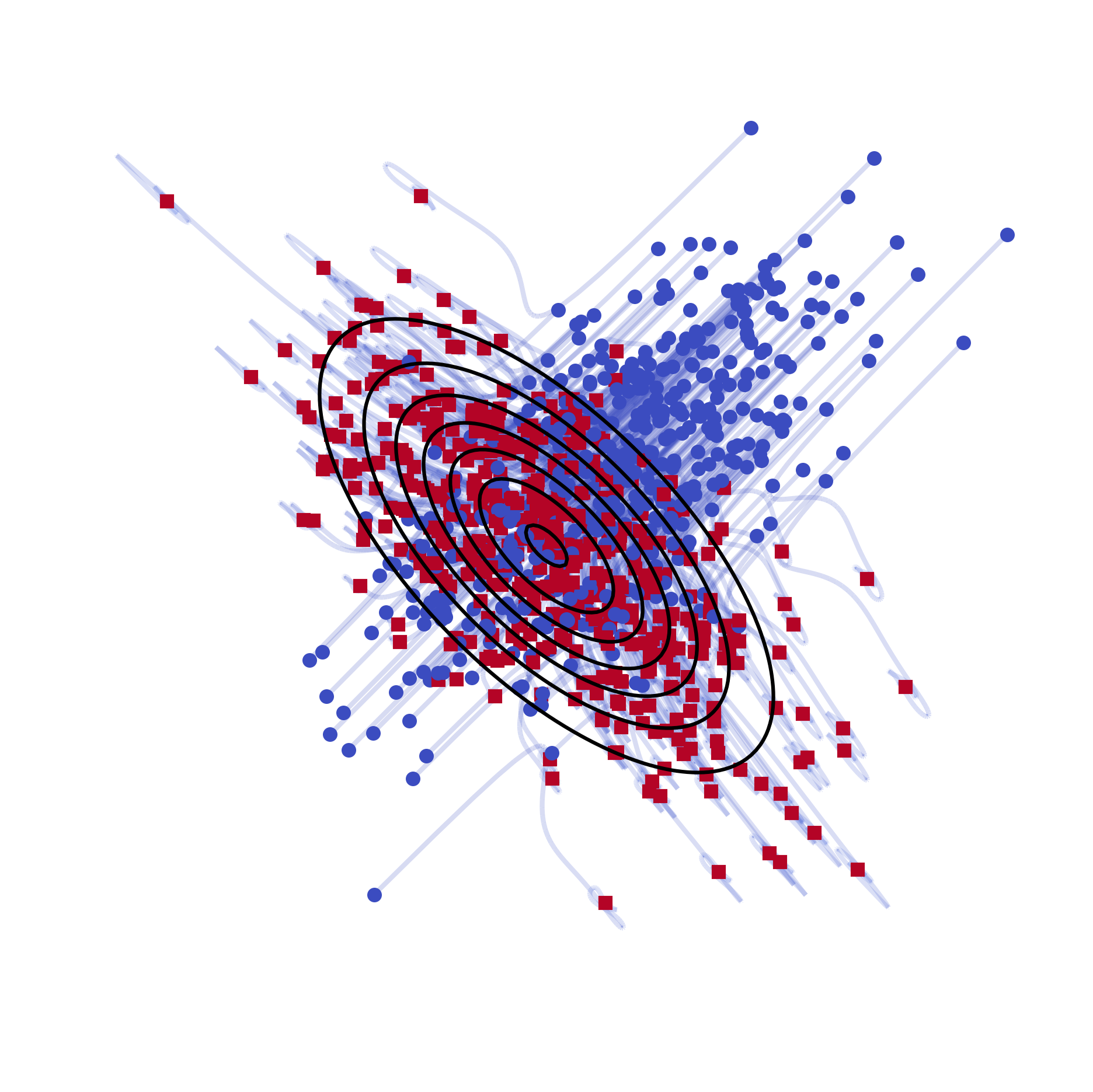}%
    \includegraphics[width=.25\textwidth]{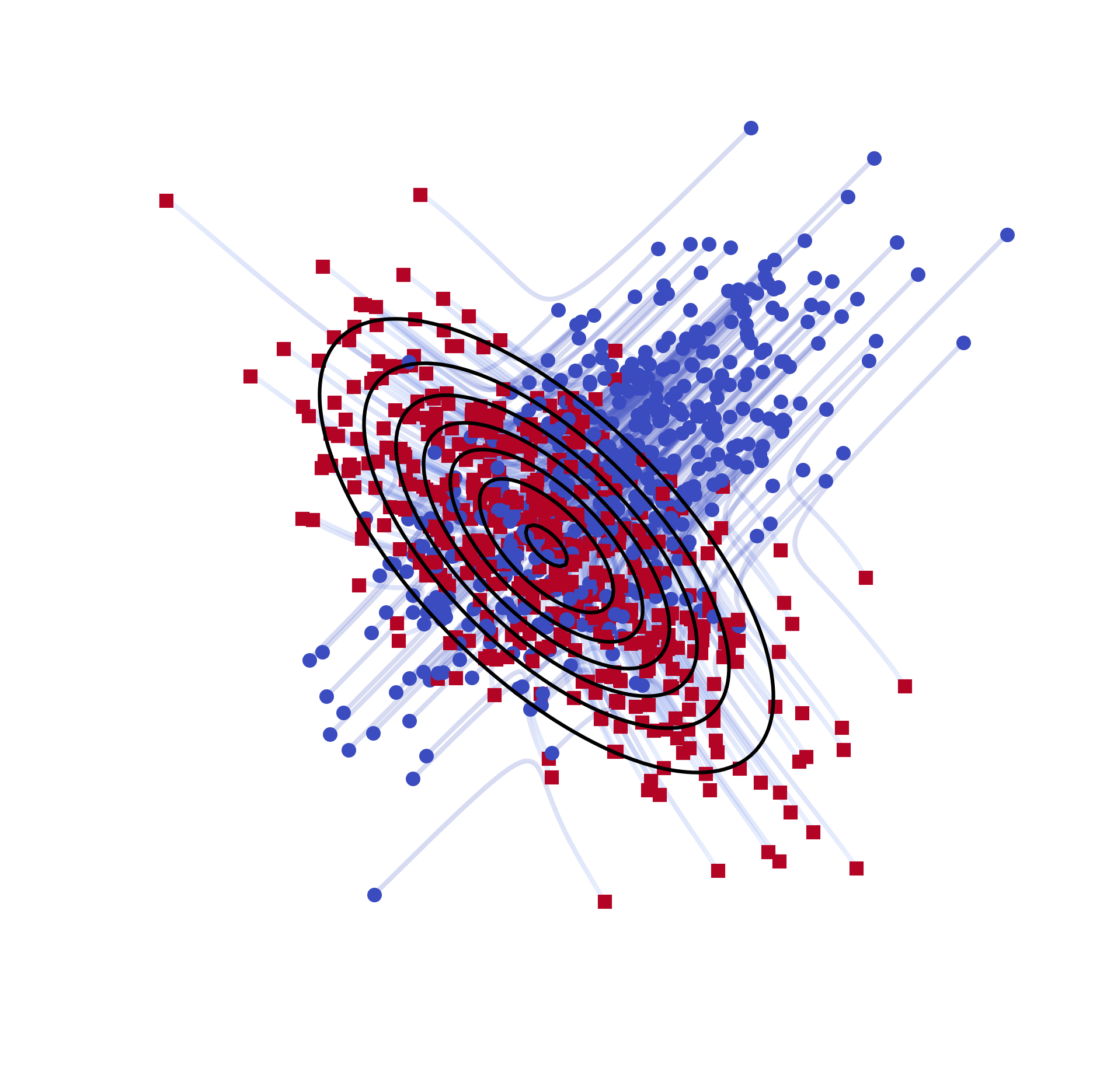}%
    \includegraphics[width=.25\textwidth]{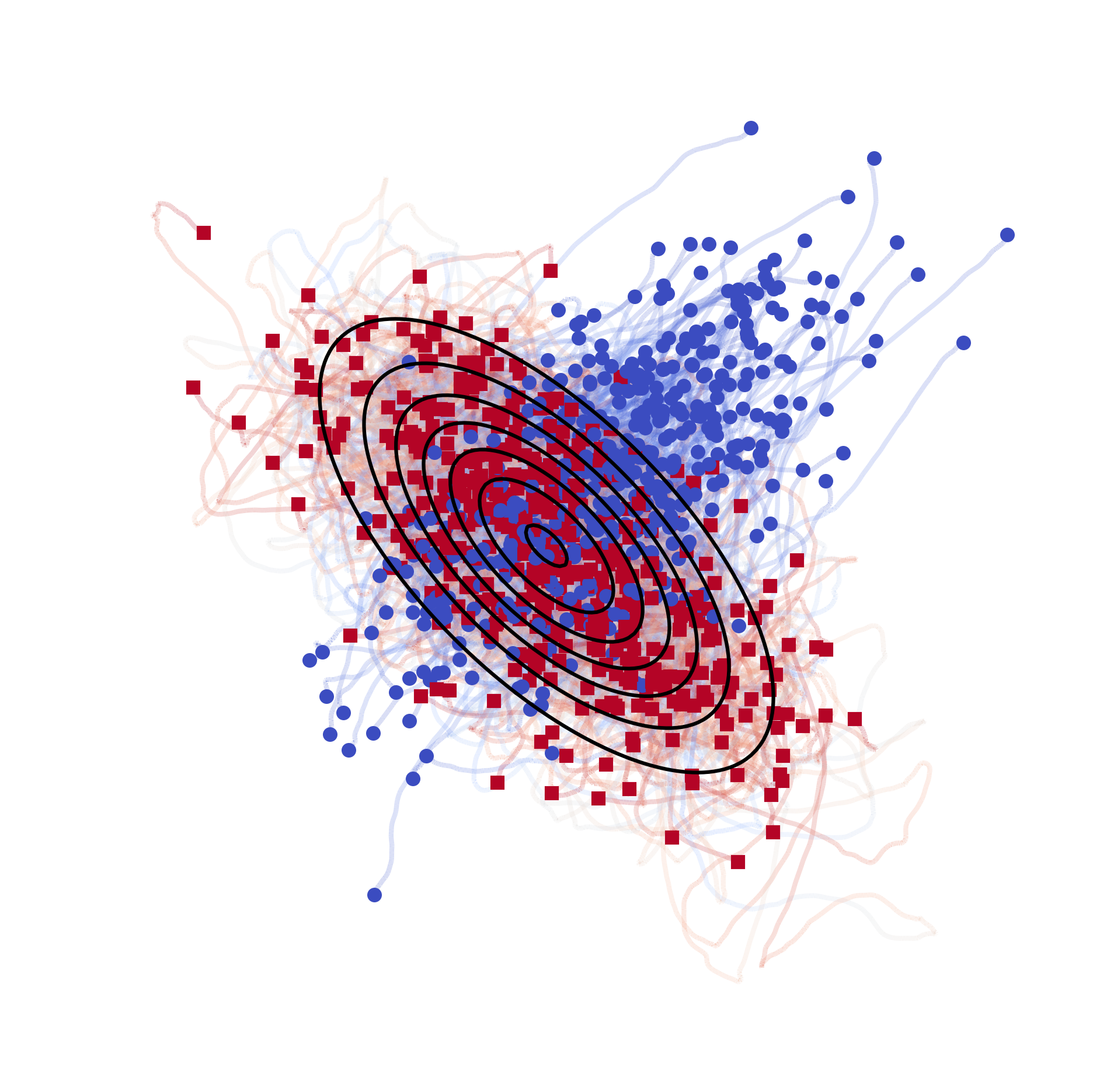}%
    \includegraphics[width=.25\textwidth]{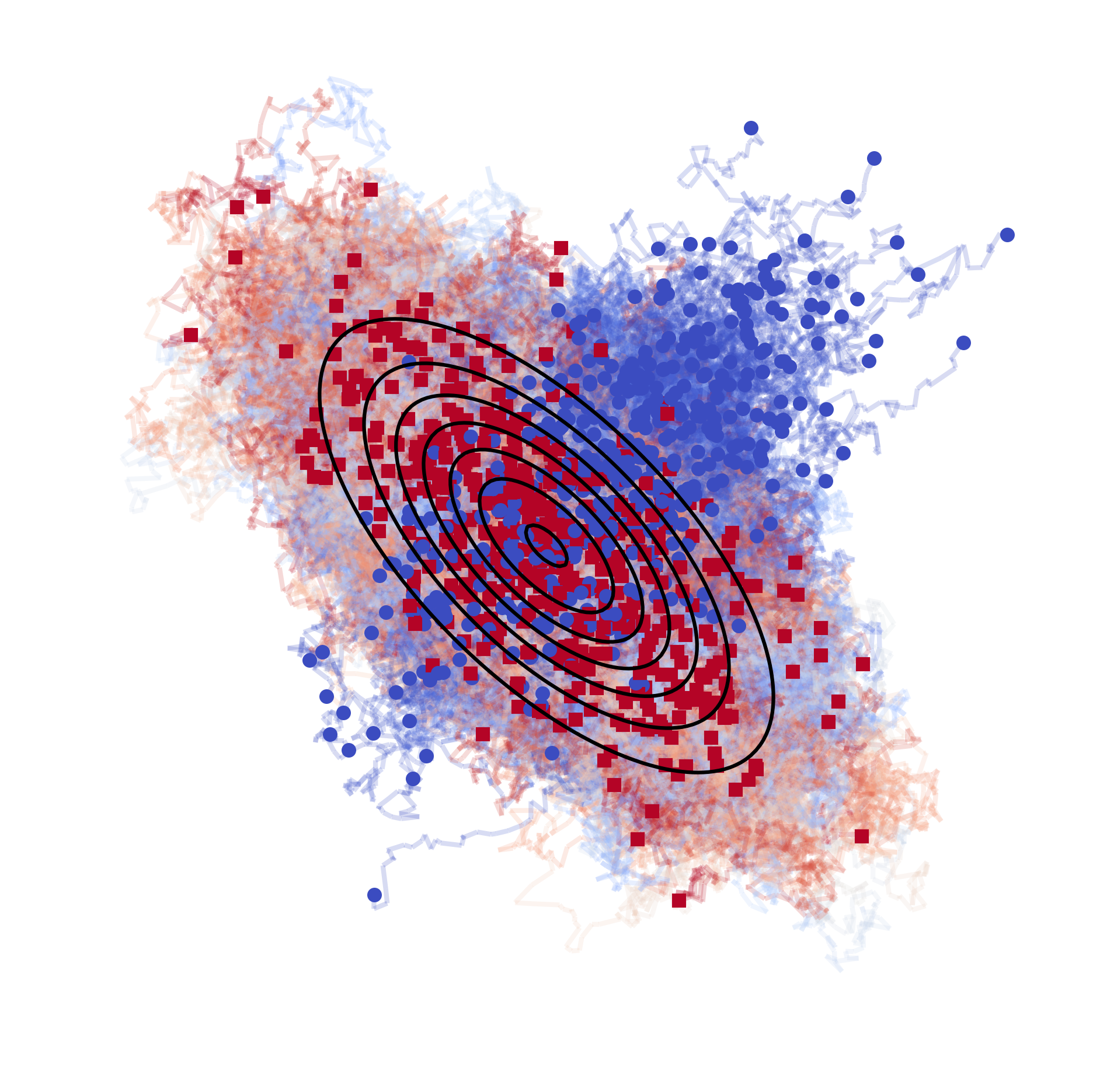}
    \caption{Particle trajectories of ASVGD, SVGD, with the generalized bilinear kernel, MALA, and ULD (from left to right). The potential is $f(x) = \frac{1}{2} x^{\tT} Q x$, with $Q = [[3, -2], [-2, 3]]$ and we initialize the particles from a Gaussian distribution with mean $[1, 1]^{\tT}$ and covariance $[[3, 2], [2, 3]]$.}
    \label{fig:Gaussian}
\end{figure}%
\begin{figure}[H]
    \centering
    \includegraphics[width=.55\textwidth]{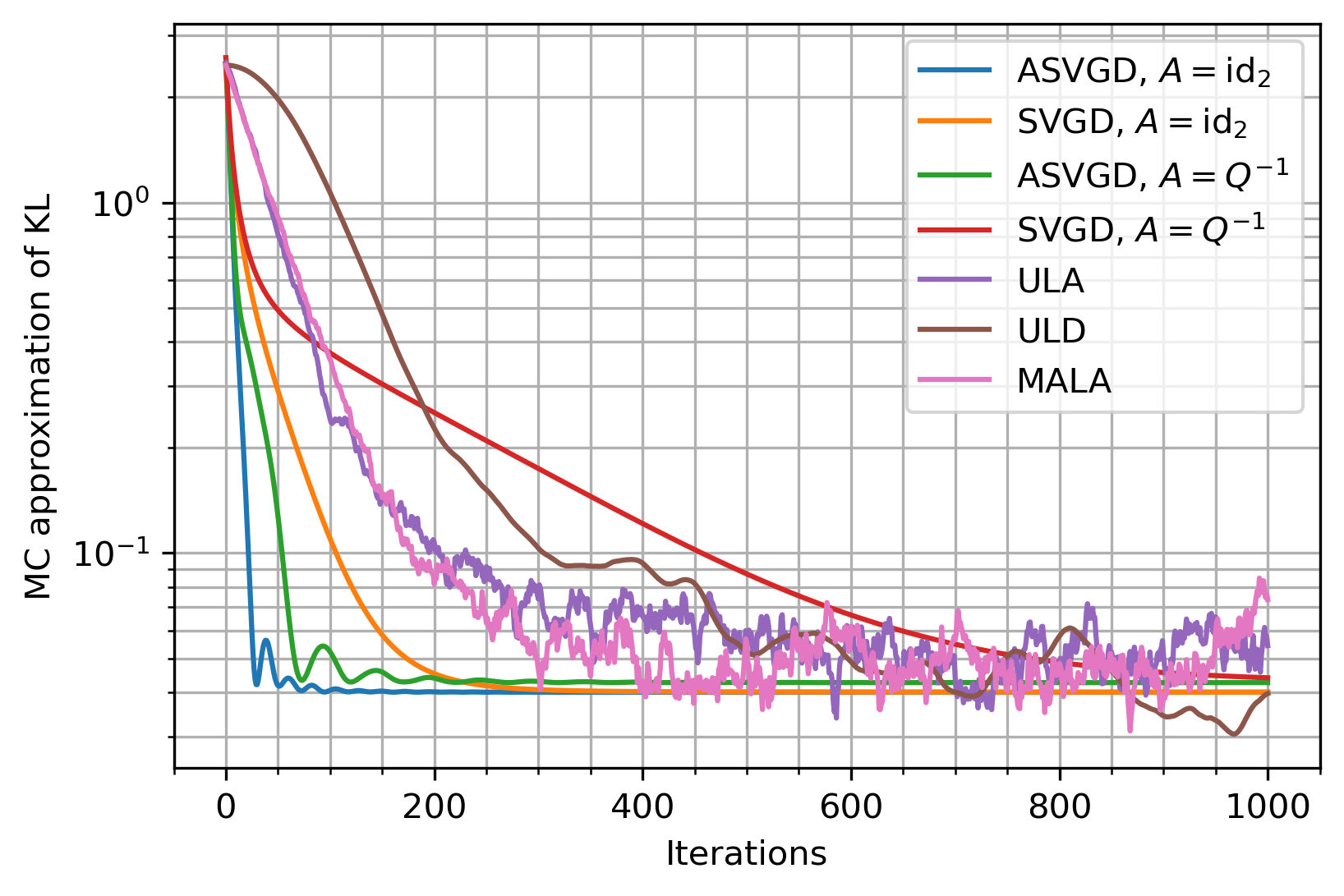}
    \caption{Monte-Carlo-estimated KL divergence for two different choice of $A$ for the particle evolutions from \figref{fig:Gaussian}. We see that for ASVGD $A = \id_2$ performs better than aligning $A$ with the target.} \label{fig:bilinear_KL}
\end{figure}%

\paragraph{Gaussian kernel.}
Since SVGD with the Gaussian kernel can match any target density, we consider the following differentiable target densities $f$, where we set $L \coloneqq \Lip(f) = \| \nabla f \|_{\infty}$ and $m \coloneqq \lambda_{\min}(\nabla^2 f)$.
The condition number of the potential $f$ is $\kappa \coloneqq \frac{L}{m}$.
For two of the considered targets, we collect these parameters in \cref{tab:targets}.
\begin{table}[H]
    \centering
    \begin{tabular}{c|cccc}
        target name             & potential $f(x)$              & $L$       & $m$              & $\kappa$ \\ \hline
        non-Lipschitz           & $\frac{1}{4} (x_1^4 + x_2^4)$ & $\infty$  & $0$              & $\infty$ \\
        anisotropic Gaussian    & $x^{\tT} \diag(10, 0.05) x$   & $\approx 20$    & $\frac{1}{20}$   & $\approx 401$
    \end{tabular}
    \caption{Smoothness and convexity constants and condition number for two target distributions, whose potentials have a simple closed form.}
    \label{tab:targets}
\end{table}

We observe that SVGD and ASVGD have lower variance.
Another advantage of our algorithm being deterministic is that particles arrange neatly along the level lines of the density, especially for small bandwidths, as opposed to the randomness of ULA, ULD, and MALA.
This \textit{structured behavior} of ASVGD is favorable since it respects symmetries of the target.
This is most pronounced in the first example in \figref{fig:nonLip}.
Furthermore, a smaller Gaussian kernel width makes the particle cluster closer to the modes, making the bandwidth an interpretable parameter.
We also observe that the potential not being Lipschitz continuous leads to a high rejection rate, which slows down MALA considerably.
As observed for the anisotropic Gaussian target, ULA (and to some extend MALA and ULD, whose particle overshoot at the ends) need small step sizes to avoid degenerating, while ASVGD, as SVGD, works with large step sizes.
This is an advantage of the particle-wise restart in ASVGD, in contract ULA needs large steps in one coordinate direction, which are unsuitable for the other direction.
Similarly, for the non-Lipschitz target, the ULA particles diverge almost immediately when using larger step sizes.

Lastly, the momentum of ASVGD can improve exploration: only the second-order in time methods, ASVGD and ULD, explore both modes and do not just concentrate on one banana of the target distribution. Note that the potential of the double bananas target is non-convex and non-smooth.
Even though ULD outperforms ASVGD in terms of the approximated KL along the flow, it matches the geometry of the target better (see anisotropic Gaussian example) and has less variance (non-Lipschitz potential example).

\begin{figure}[H]
    \begin{subfigure}{\textwidth}
    \includegraphics[width=0.25\linewidth]{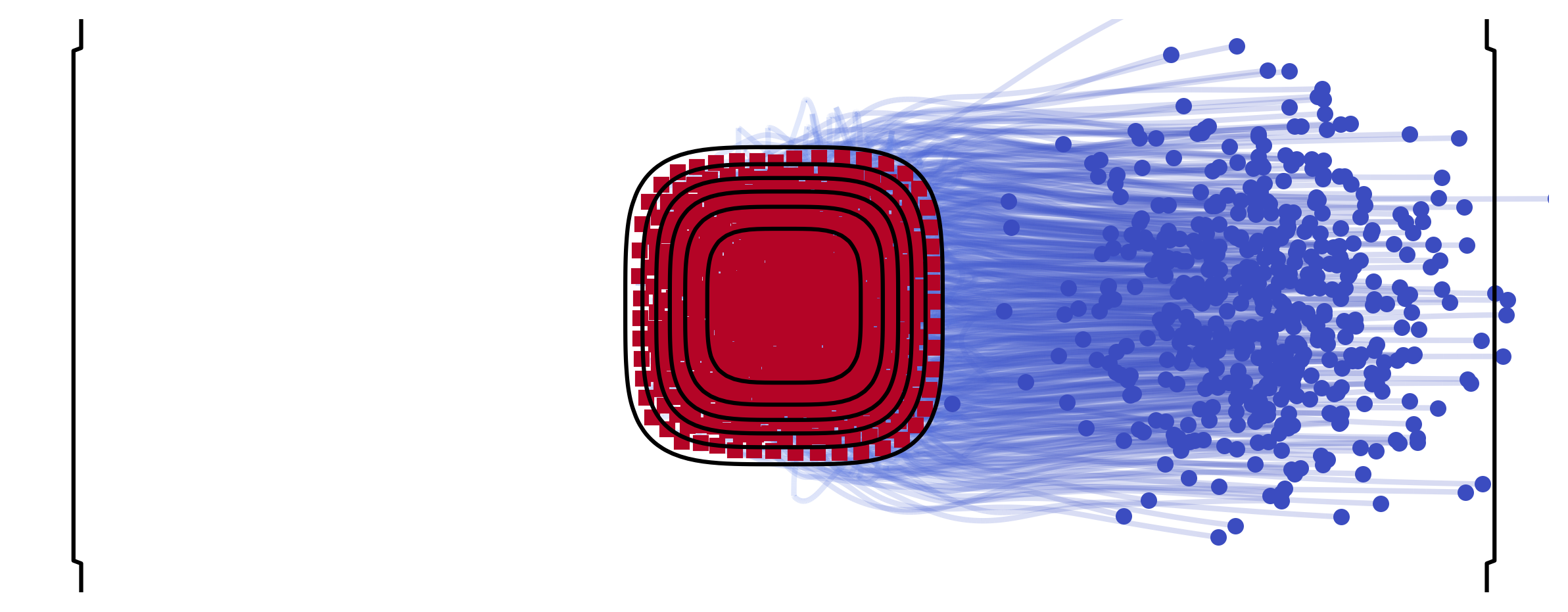}%
    \includegraphics[width=0.25\linewidth]{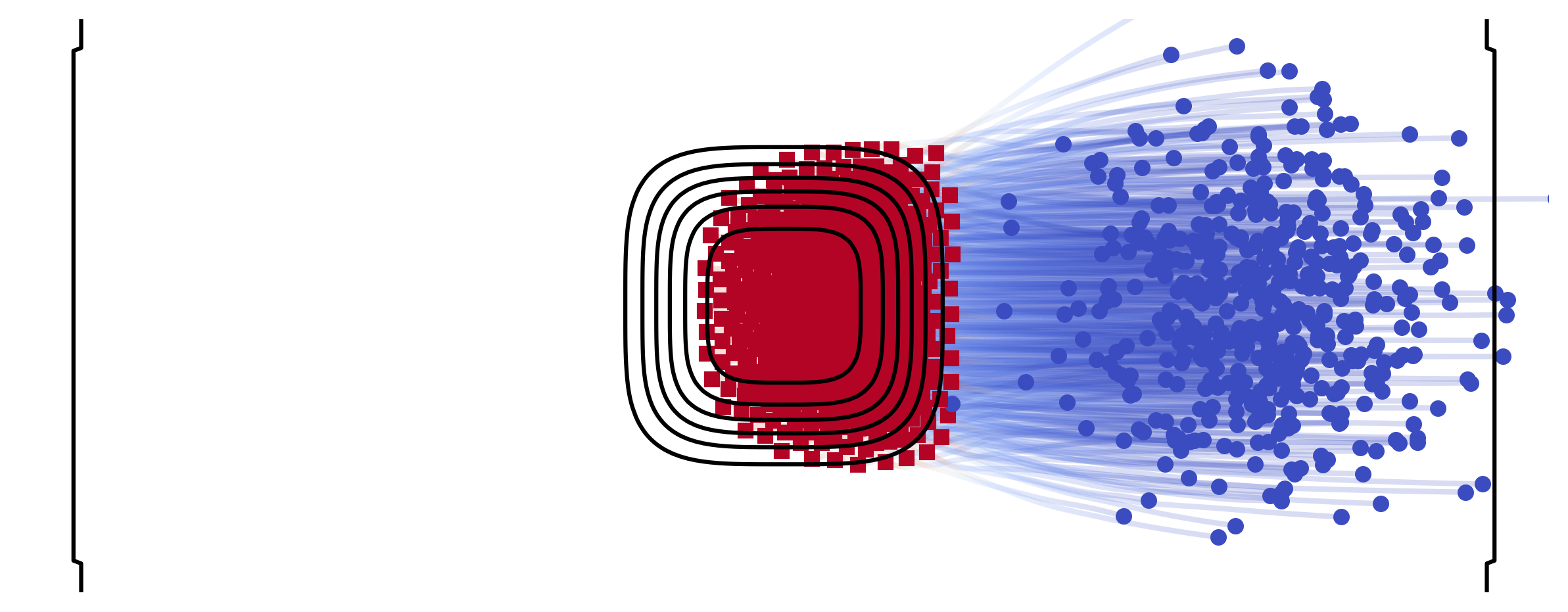}%
    \includegraphics[width=0.25\linewidth]{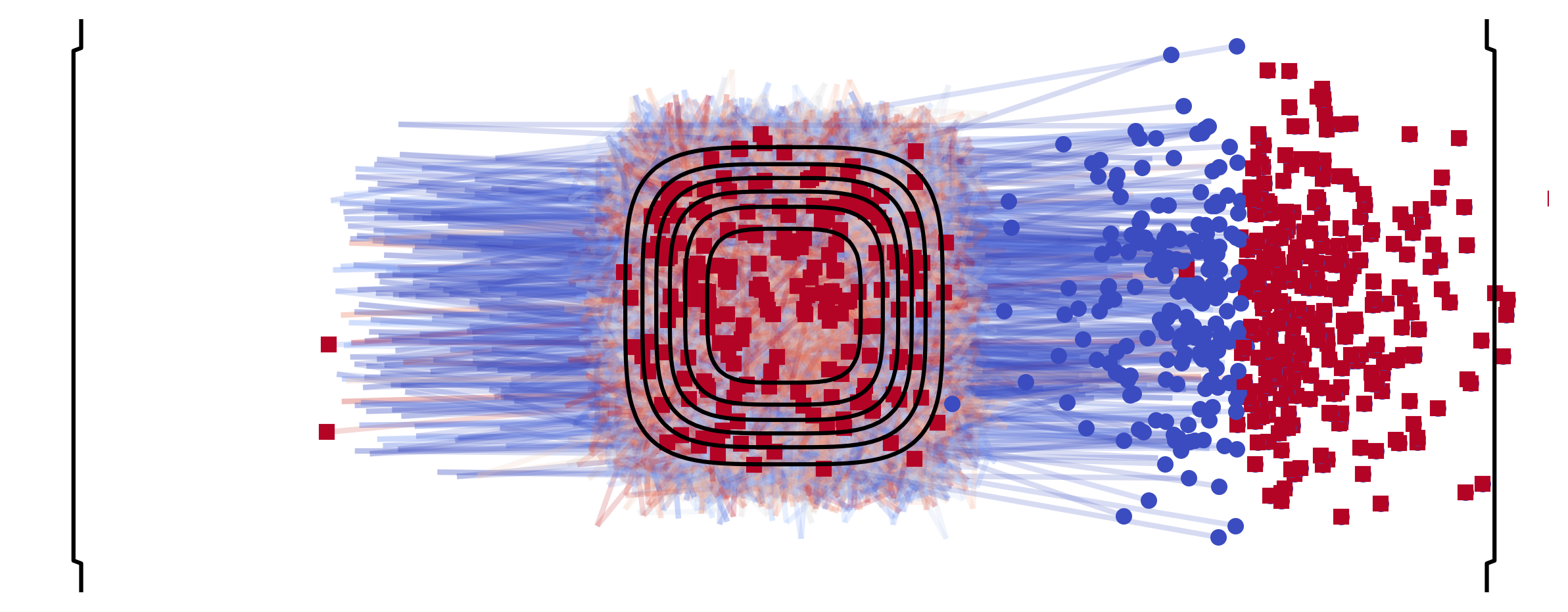}%
    \includegraphics[width=0.25\linewidth]{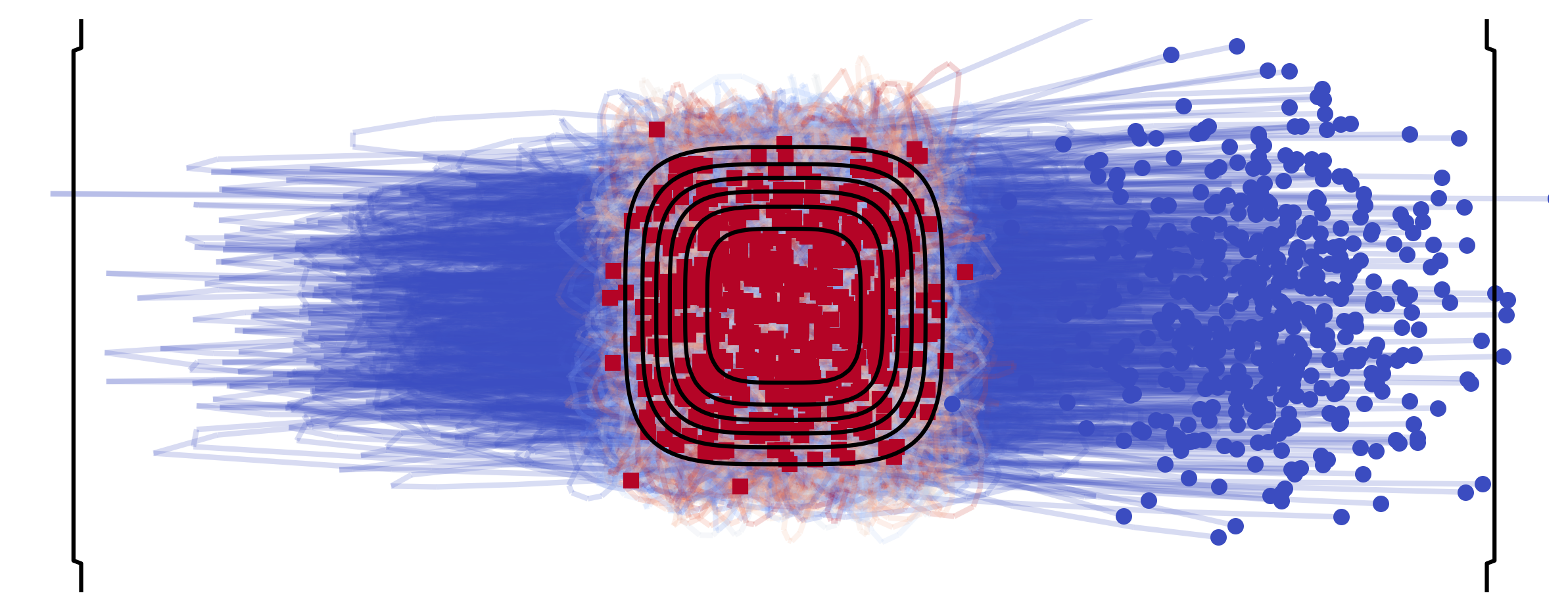}
    \includegraphics[width=0.25\linewidth]{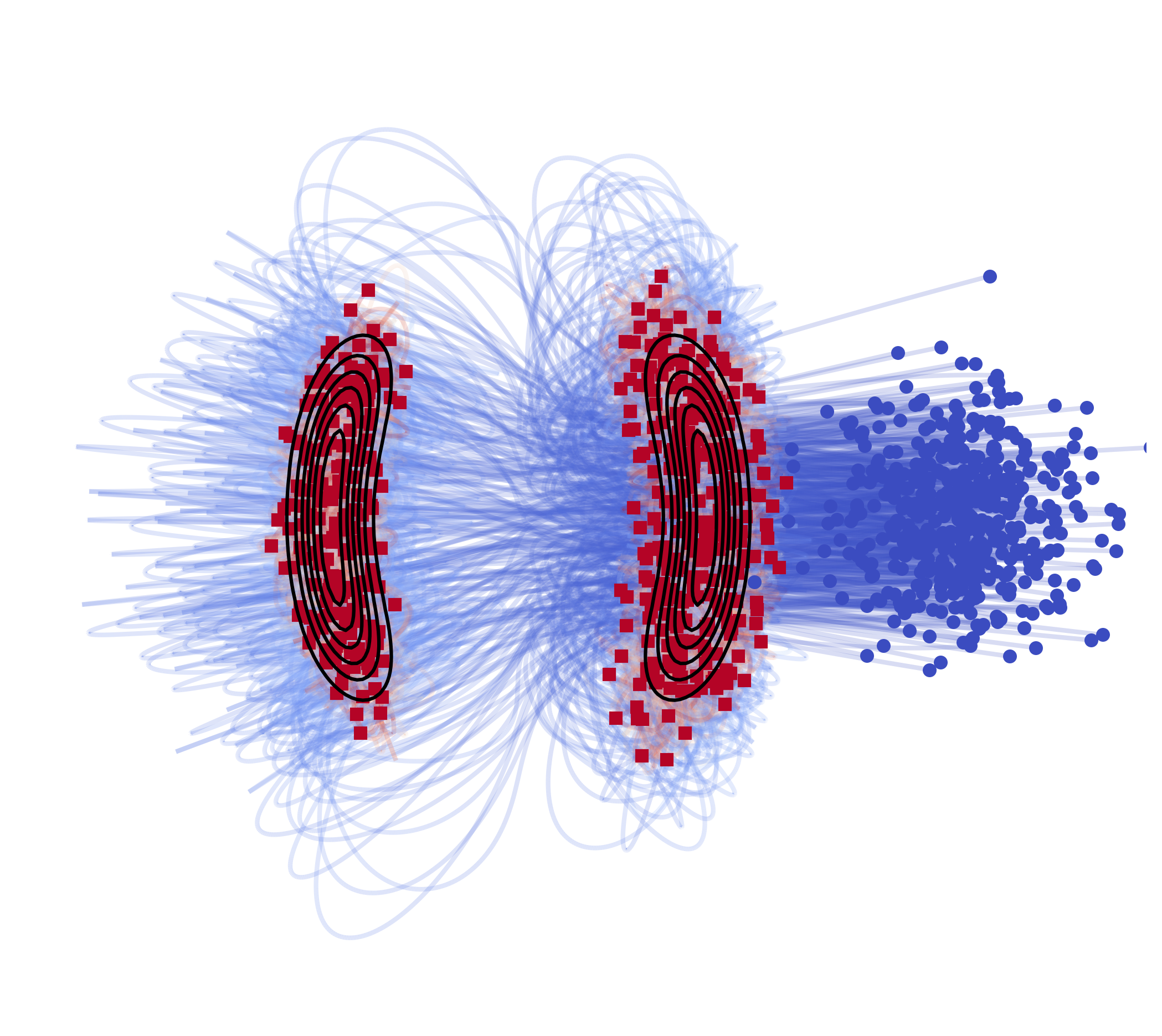}%
    \includegraphics[width=0.25\linewidth]{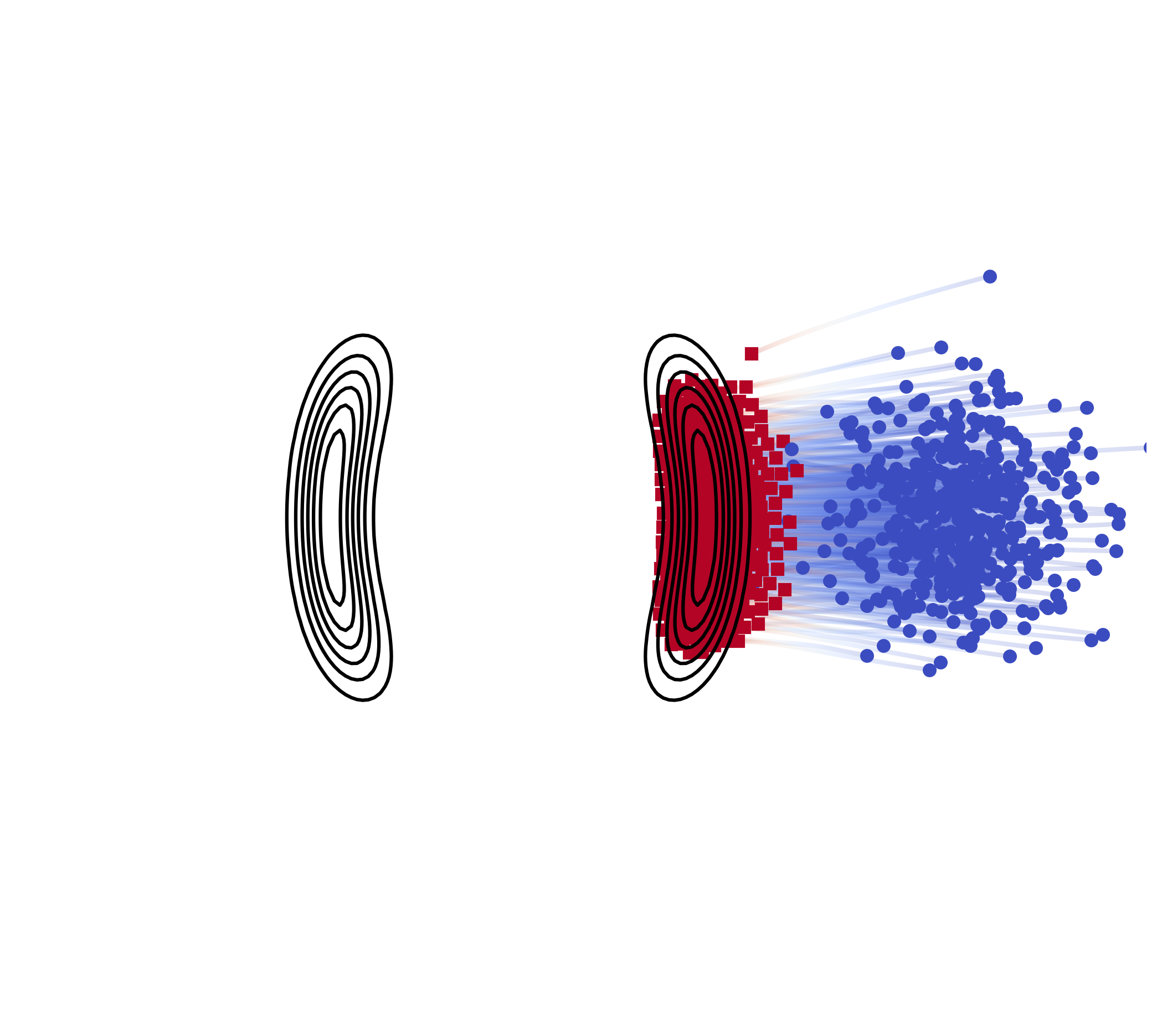}%
    \includegraphics[width=0.25\linewidth]{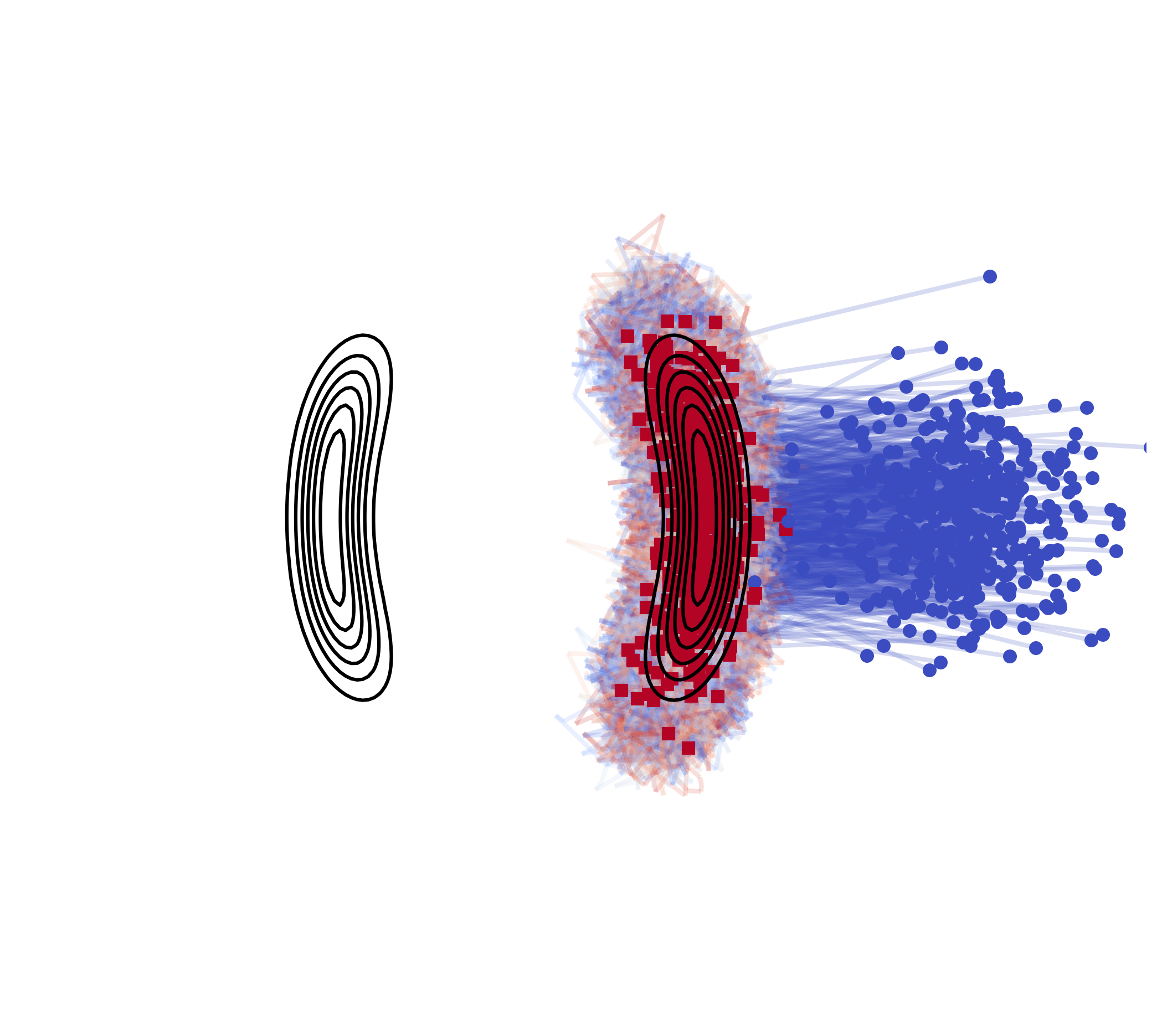}%
    \includegraphics[width=0.25\linewidth]{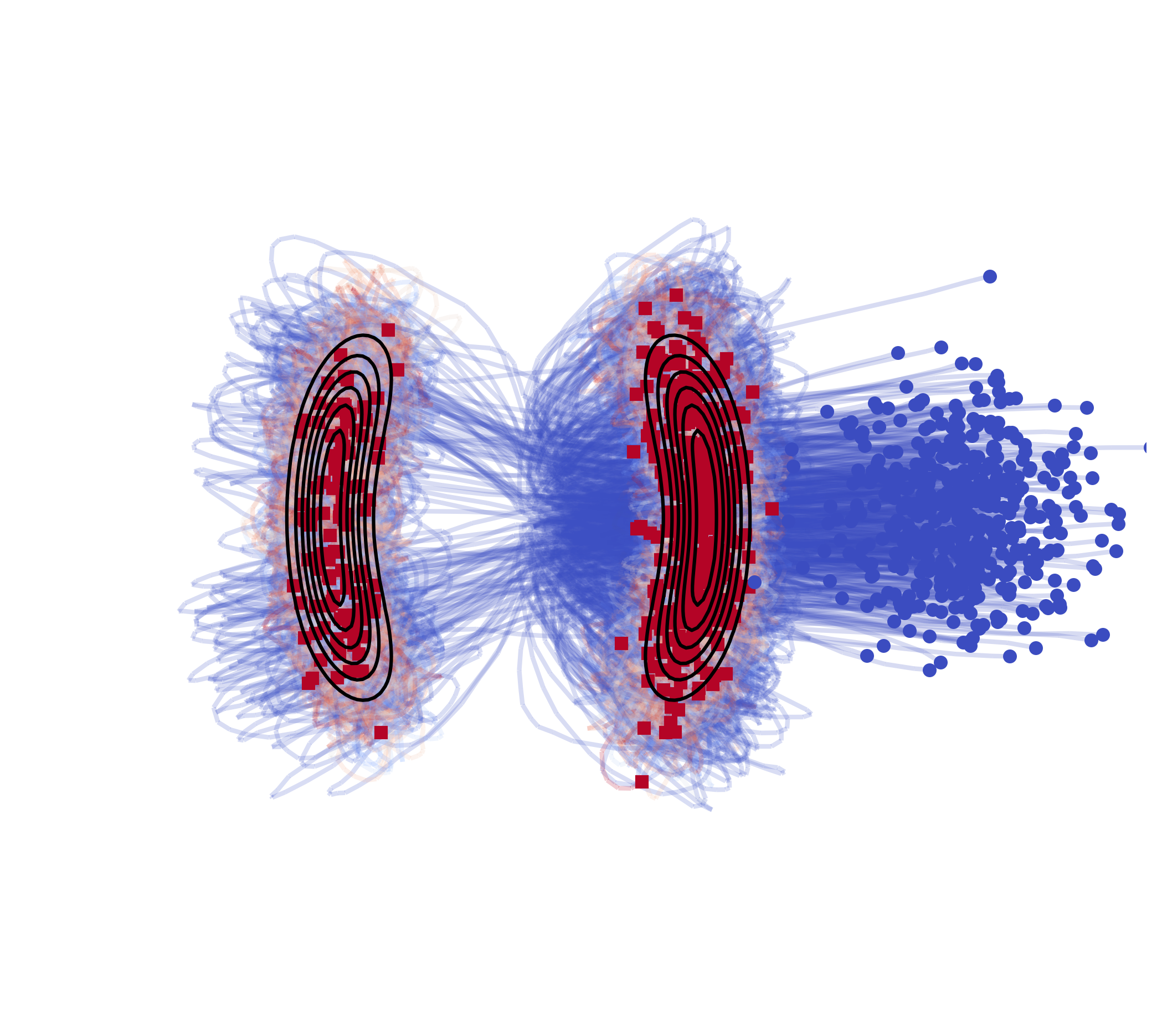}
    \includegraphics[width=0.25\linewidth]{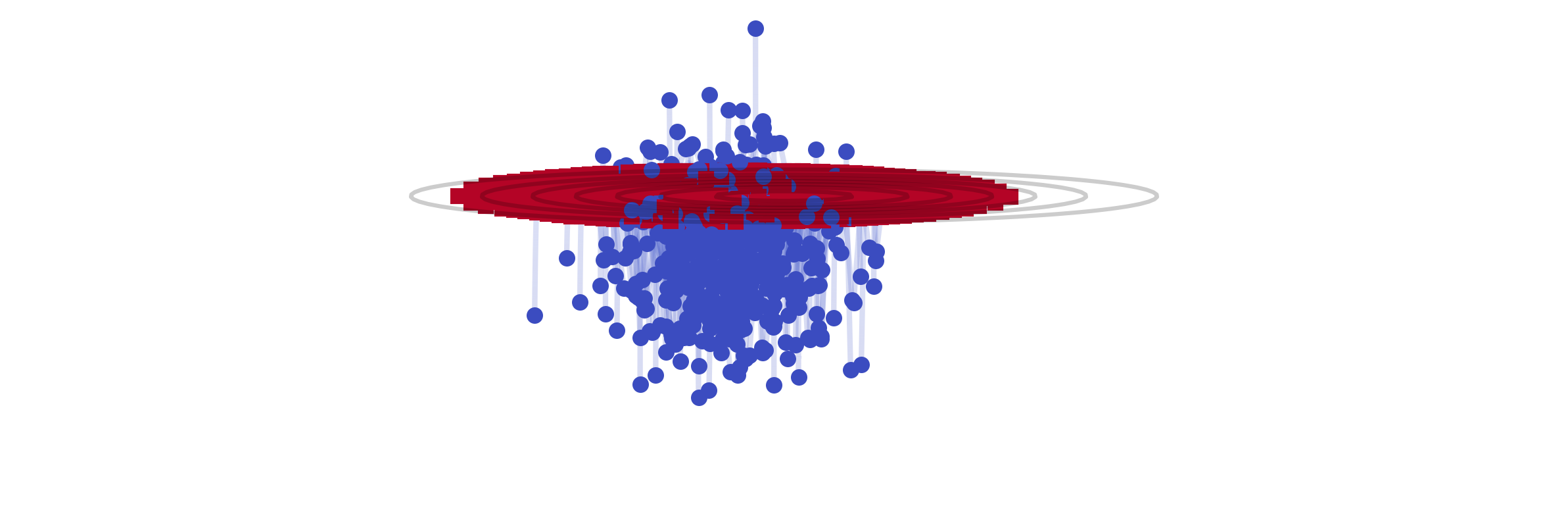}%
    \includegraphics[width=0.25\linewidth]{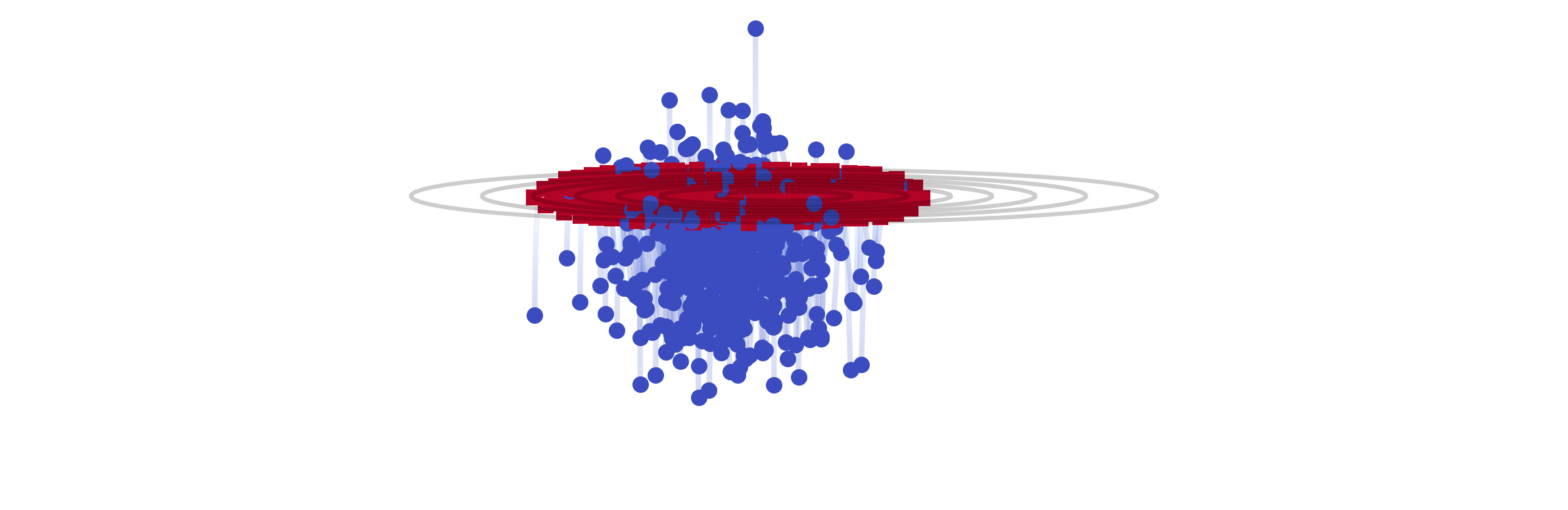}%
    \includegraphics[width=0.25\linewidth]{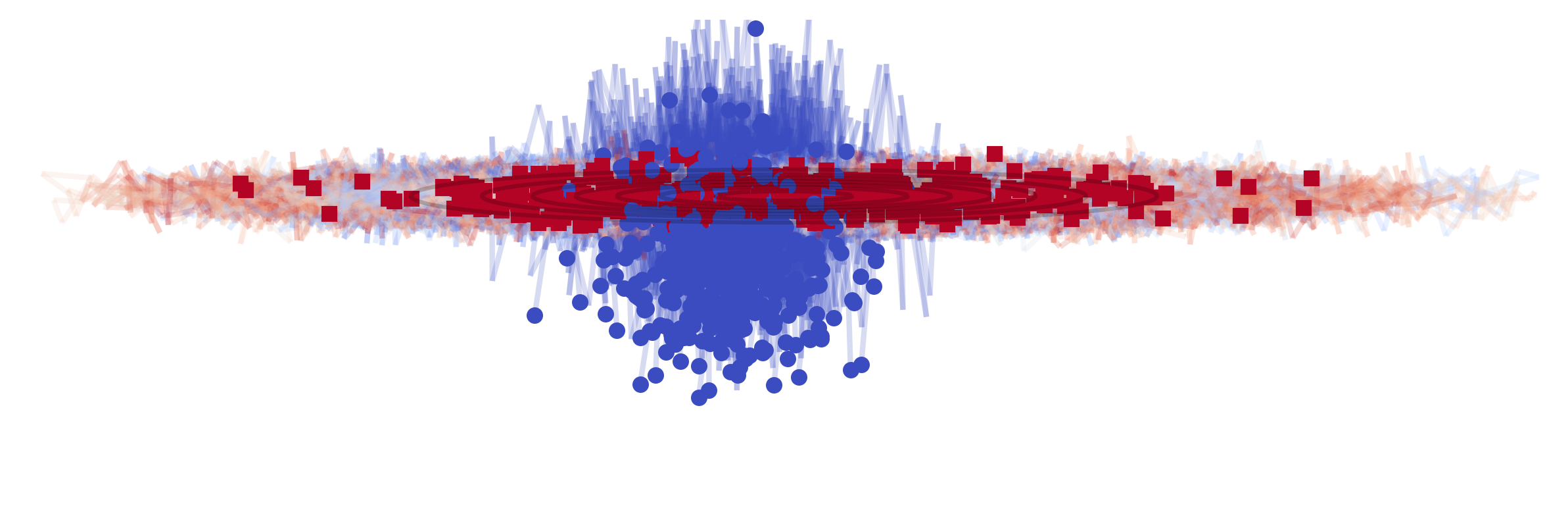}%
    \includegraphics[width=0.25\linewidth]{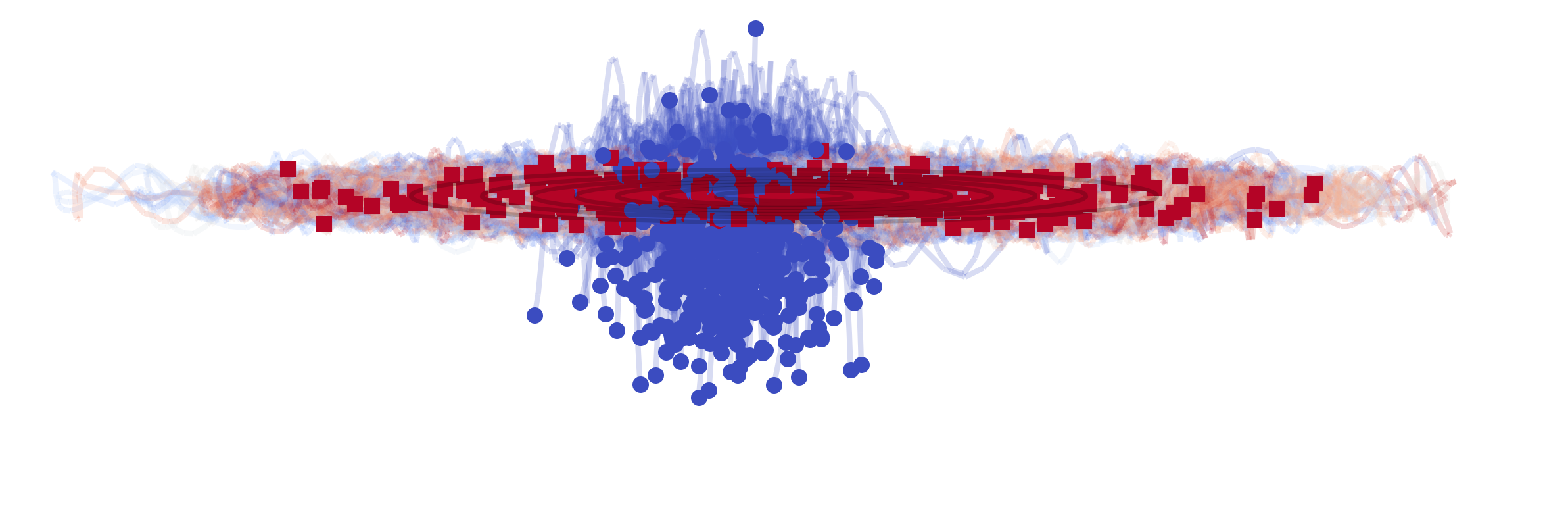}
    \subcaption{Particle trajectories of ASVGD, SVGD, with the Gaussian kernel, MALA, and ULD (from left to right) for the convex, non-Lipschitz potential $f(x, y) = \frac{1}{4}(x^4 + y^4)$ (top), the double bananas target from \cite{WL2022} (middle) and an anisotropic Gaussian target with mean $[1, 1]^{\tT}$ and covariance matrix $Q = \diag(10, 0.05)$ (bottom).}
    \end{subfigure}
    \begin{subfigure}{\textwidth}
    \includegraphics[width=0.33\linewidth]{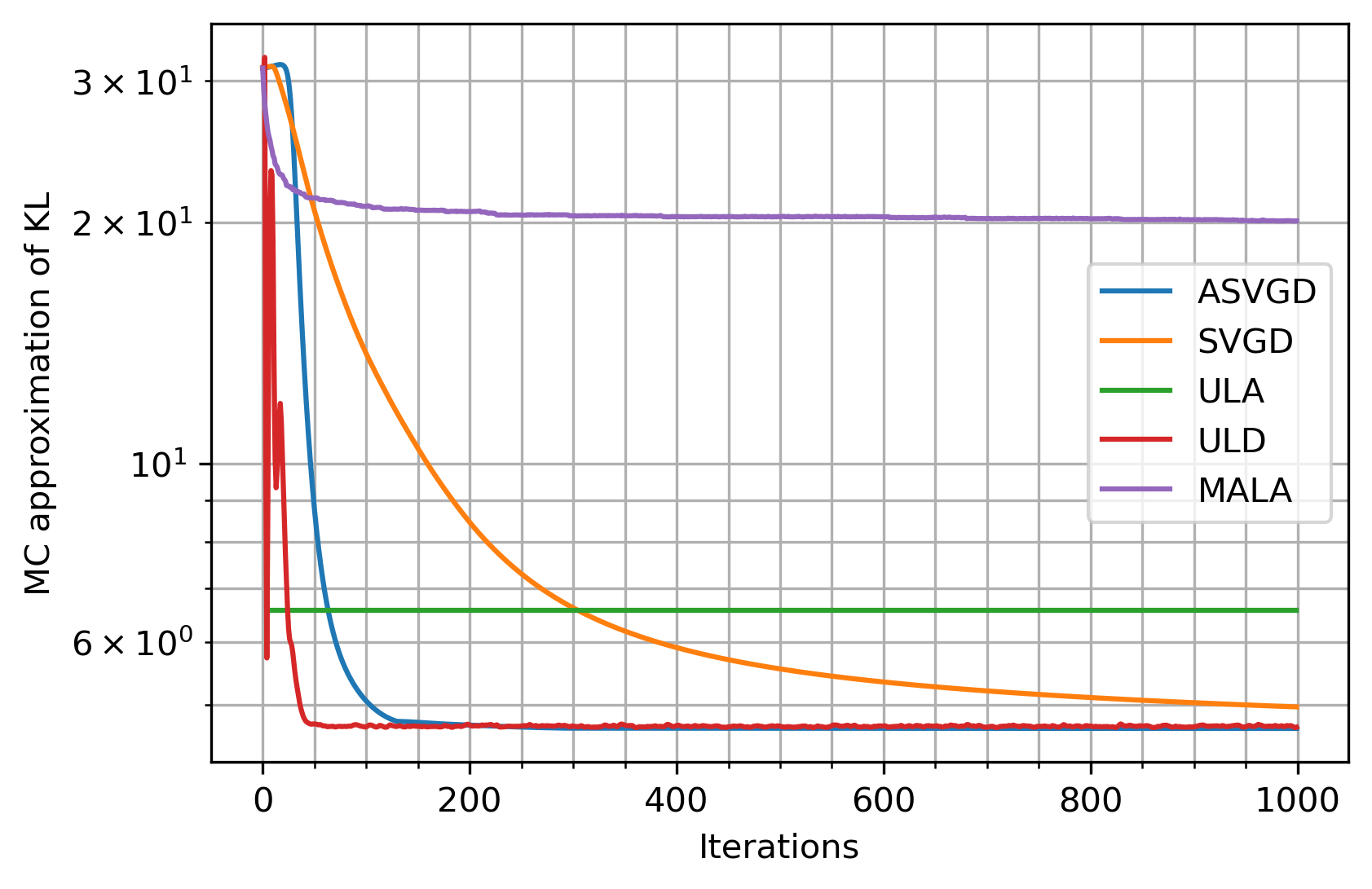}%
    \includegraphics[width=0.33\linewidth]{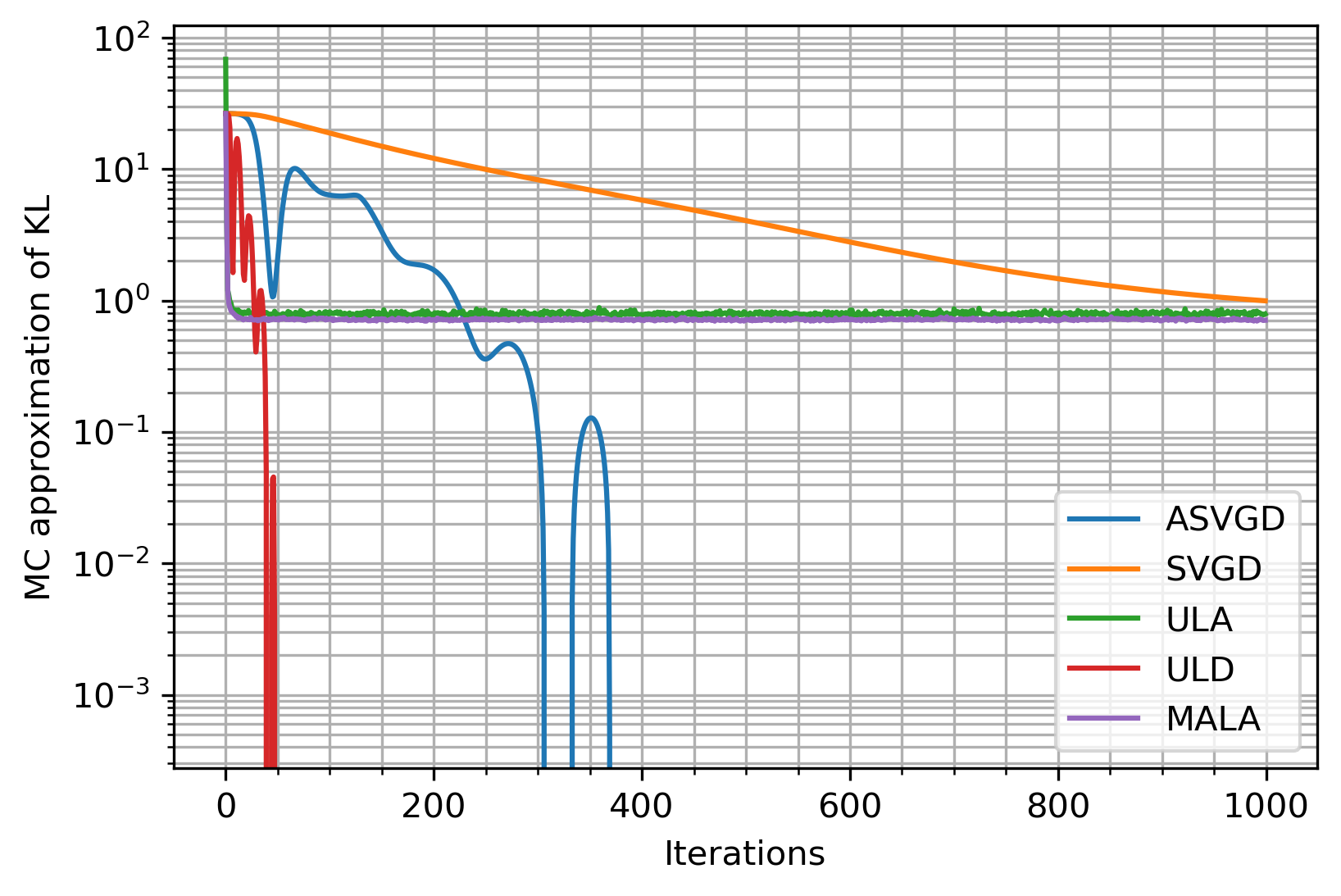}%
    \includegraphics[width=0.33\linewidth]{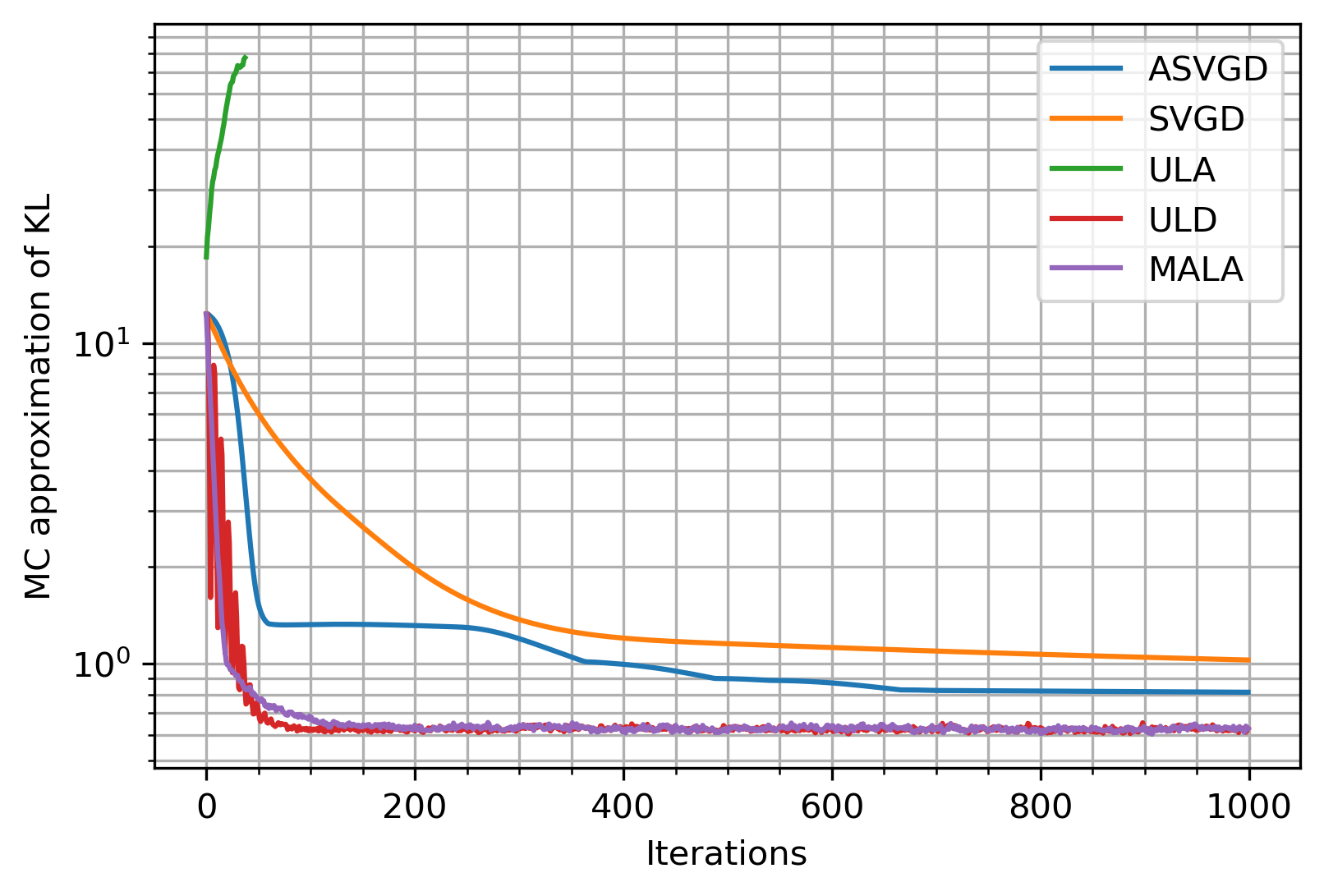}
    \subcaption{Monte-Carlo estimation of the KL divergence to the target for three targets above.} 
    \end{subfigure}
    \caption{Comparing ASVGD to other sampling algorithms.
    For the double bananas target, we choose a constant high damping $\beta = 0.985$.
    We draw the initial particles from unit normal distributions with means $[0, 5]^{\tT}$, $[0, 7]^{\tT}$, and $[0, 0]^{\tT}$, respectively.}
    \label{fig:nonLip}
\end{figure}%

\subsection{Bayesian Neural Networks}

We use the same setting as in \cite{LW2016}, which in turn relies heavily on \cite{HA2015}.
Given $N \in \N$ data points $D \coloneqq ((x_i, y_i))_{i = 1}^{N} \subset (\R^k \times \R)^N$ that are assumed to be generated by the forward model $y_i = f(x_i) + \eps_i$, where $\eps_i \sim \NN(0, \gamma^{-1})$ for some noise parameter $\gamma > 0$, which is drawn from a $\Gamma(1, 0.1)$ distribution, we want to find weights $W$ such that $f$ is well approximated by a neural network with weights $W$.
Let $X \coloneqq (x_i)_{i = 1}^{N} \in \R^{k \times N}$ and $y \coloneqq (y_i)_{i = 1}^{N} \in \R^N$.
By Bayes' rule, the posterior distribution of the weights given the data is
\begin{equation*}
    p(W, \gamma, \lambda \mid D)
    = \frac{p(W \mid \lambda) \prod_{i = 1}^{N} p(y_i \mid x_i, W, \gamma) p(\lambda) p(\gamma)}{p(y \mid X)},
\end{equation*}
where $\lambda > 0$ is a precision constant, drawn from $\Gamma(1, 0.1)$, and $p(y \mid X)$ is the intractable normalization constant.
We assume that the prior likelihood $p(W \mid \lambda)$ of the weights follows a Gaussian distribution, and that $p(y_i \mid x_i, W, \gamma) \sim \NN(y_i, \gamma^{-1})$.

We can then make predictions $\hat{y}$ for a new input $\hat{x}$ using the \enquote{predictive distribution}
\begin{equation*}
    p(\hat{y} \mid \hat{x}, D)
    = \int p(\hat{y} \mid \hat{x}, W, \gamma) p(W, \gamma, \lambda \mid D) \d{\gamma} \d{\lambda} \d{W}.
\end{equation*}

We now show that under equal parameters, ASVGD improves upon SVGD on this Bayesian neural network task, since it has a lower error and higher likelihood.
The wall clock computation times for both SVGD and ASVGD are very similar, with ASVGD taking slightly longer.
We choose the step size $10^{-4}$, a batch size of 100, 50 hidden neurons, a training ratio of 90\%, AdaGrad for choosing the step size, median heuristic for choice of kernel width and Wasserstein regularization parameter $\eps = .1$.

\begin{table}[H]
    \centering
    \resizebox{\textwidth}{!}{
    \begin{tabular}{l|cc|cc|cc}
    \hline
    \multicolumn{1}{c|}{} & \multicolumn{2}{c|}{RMSE} & \multicolumn{2}{c|}{Log-likelihood} & \multicolumn{2}{c|}{time (seconds)} \\
    \hline
    Dataset & ASVGD & SVGD & ASVGD & SVGD & ASVGD & SVGD \\
    \hline
    Concrete & $\bm{8.862_{\pm 0.107}}$ & $9.208_{\pm 0.099}$ & $\bm{-3.560_{\pm 0.012}}$ & $-3.636_{\pm 0.010}$ & $11.588_{\pm 0.05}$  & $\bm{10.609_{\pm 0.06}}$ \\
    Energy   & $\bm{2.184_{\pm 0.019}}$ & $2.200_{\pm 0.021}$ & $\bm{-2.204_{\pm 0.009}}$ & $-2.211_{\pm 0.008}$ & $11.54_{\pm 0.045}$  & $\bm{10.560_{\pm{0.026}}}$ \\
    Housing  & $\bm{2.525_{\pm 0.031}}$ & $2.556_{\pm 0.056}$ & $\bm{-2.401_{\pm 0.009}}$ & $-2.405_{\pm 0.017}$ & $13.491_{\pm 0.045}$ & $\bm{12.334_{\pm 0.050}}$  \\
    Kin8mn   & $\bm{0.175_{\pm 0.000}}$ & $0.178_{\pm 0.001}$ & $\bm{0.322_{\pm 0.003}}$  & $0.306_{\pm 0.005}$  & $11.526_{\pm 0.032}$ & $\bm{10.652_{\pm 0.049}}$  \\
    Naval    & $\bm{0.007_{\pm 0.000}}$ & $0.007_{\pm 0.000}$ & $\bm{3.487_{\pm 0.001}}$  & $3.481_{\pm 0.002}$  & $15.026_{\pm 0.042}$ & $\bm{13.726_{\pm 0.037}}$ \\
    power    & $\bm{4.089_{\pm 0.011}}$ & $4.121_{\pm 0.015}$ & $\bm{-2.844_{\pm 0.003}}$ & $-2.854_{\pm 0.004}$ & $9.985_{\pm 0.029}$ & $\bm{9.265_{\pm 0.057}} $\\
    protein  & $\bm{5.114_{\pm 0.007}}$ & $5.163_{\pm 0.013}$       & $\bm{-3.050_{\pm 0.001}}$ & $-3.060_{\pm 0.003}$     & $12.092_{\pm 0.025}$ & $\bm{11.191_{\pm 0.034}}$ \\
    wine     & $0.223_{\pm 0.010}$      & $\bm{0.215_{\pm 0.008}}$  & $0.140_{\pm 0.036}$       & $\bm{0.171_{\pm 0.018}}$ & $13.532_{\pm 0.035}$ & $\bm{12.372_{\pm 0.041}}$\\
    \hline
    \end{tabular}}
    
    \vspace{3mm}
    
    \resizebox{\textwidth}{!}{\begin{tabular}{l|cc|cc|cc}
    \hline
    \multicolumn{1}{c|}{} & \multicolumn{2}{c|}{RMSE} & \multicolumn{2}{c|}{Log-likelihood} & \multicolumn{2}{c|}{time (seconds)} \\
    \hline
    Dataset & ASVGD & SVGD & ASVGD & SVGD & ASVGD & SVGD \\
    \hline
    Concrete & $\bm{5.536_{\pm 0.060}}$ & $7.349_{\pm 0.067}$ & $\bm{-3.135_{\pm 0.016}}$ & $-3.439_{\pm 0.010}$ & $14.867_{\pm 0.040}$  & $\bm{14.001_{\pm 0.044}}$ \\
    Energy   & $\bm{0.899_{\pm 0.057}}$ & $1.950_{\pm 0.028}$ & $\bm{-1.268_{\pm 0.068}}$ & $-2.088_{\pm 0.016}$ & $14.870_{\pm 0.051}$  & $\bm{13.942_{\pm 0.035}}$ \\
    Housing  & $\bm{2.346_{\pm 0.077}}$ & $2.386_{\pm 0.048}$ & $\bm{-2.305_{\pm 0.020}}$ & $-2.343_{\pm 0.014}$ & $18.278_{\pm 0.045}$ & $\bm{17.214_{\pm 0.077}}$  \\
    Kin8mn   & $\bm{0.118_{\pm 0.001}}$ & $0.165_{\pm 0.001}$ & $\bm{0.71_{\pm 0.011}}$ & $0.384_{\pm 0.004}$ & $14.859_{\pm 0.029}$ & $\bm{14.001_{\pm 0.033}}$  \\
    Naval    & $\bm{0.005_{\pm 0.000}}$ & $0.007_{\pm 0.000}$ & $\bm{3.801_{\pm 0.01}}$ & $3.504_{\pm 0.003}$ & $20.912_{\pm 0.57}$ & $\bm{19.704_{\pm 0.05}}$ \\
    power    & $\bm{3.951_{\pm 0.005}}$ & $4.035_{\pm 0.008}$ & $\bm{-2.799_{\pm 0.001}}$ & $-2.825_{\pm 0.002}$ & $12.142_{\pm 0.053}$ & $\bm{11.435_{\pm 0.054}} $\\
    protein  & $\bm{4.777_{\pm 0.007}}$ & $4.987_{\pm 0.009}$       & $\bm{-2.983_{\pm 0.001}}$ & $-3.026_{\pm 0.002}$     & $15.616_{\pm 0.04}$ & $\bm{14.752_{\pm 0.047}}$ \\
    wine     & $\bm{0.185_{\pm 0.013}}$      & $0.191_{\pm 0.017}$  & $\bm{0.201_{\pm 0.039}}$       & $0.146_{\pm 0.046}$ & $18.293_{\pm 0.097}$ & $\bm{17.244_{\pm 0.077}}$\\
    \hline
    \end{tabular}}
    \caption{Test root mean square error (RMSE, lower is better) and log-likelihood (LL, higher is better) after 2000 iterations with (top:) 20 particles, with gradient restart and speed restart and (bottom:) 10 particles, without restarts and with constant damping $\beta = 0.95$.}
    \label{tab:BNN_RMSE_and_LL_const_damp}
\end{table}

\section{Conclusion and future directions}
In this paper, we derived and analyzed ASVGD, an accelerated variant of SVGD that incorporates momentum methods and the Stein method. In numerical analysis, we studied ASVGD with the generalized bilinear kernel. We proved that if the initial distribution and the target distribution are Gaussians, then the density function will remain in the Gaussian family.
We then gave expressions for the asymptotically optimal kernel matrix $A$ and derived the asymptotically optimal damping parameter $\alpha$.
In numerical experiments, we demonstrated that ASVGD outperforms SVGD on toy non-Gaussian targets and real-world examples, such as Bayesian neural networks.

Further directions of interest include proving convergence guarantees on the full density manifold, both in continuous and discrete time, and on the level of densities and particles, and extending the analysis to log-concave targets, including mixtures of Gaussians (this condition might have to be adapted to the kernel). 
We are also interested in finding the optimal damping parameter $\alpha$ without linearizing the ASVGD at the equilibrium, but instead based on the convexity of the energy functional with respect to the (Wasserstein-)Stein metric.
Furthermore, there are other aspects that we have not considered in this paper, namely, choosing Rényi divergences or Tsallis divergences instead of the KL divergence in ASVGD, and comparing their convergence speed. 
We are also interested in studying adaptive step size choices, choosing conformal-symplectic time discretization, and handling potential algorithm bias.  

\paragraph{Acknowledgments.}

W. Li's work is supported by the AFOSR YIP award No.~FA9550-23-10087, NSF RTG: 2038080, and NSF DMS-2245097.
The paper was finalized during a visit of W. Li at the TU Berlin support by the SPP 2298 Theoretical Foundations of Deep Learning.
V. Stein thanks his advisor, Gabriele Steidl, for her support and guidance throughout.

\clearpage

\bibliographystyle{abbrv}
\bibliography{Bibliography}

\clearpage

\begin{appendices}

\section{Additional information about Fréchet manifolds} \label{sec:Frechet}

Since $C^{\infty}(\Omega)$ is not a Banach space, we use Fréchet spaces and Fréchet manifolds instead of the more straightforward concept of Banach manifolds.

\begin{definition}[Fréchet space {\cite[p.~253]{S1989}, \cite[p.~85]{T1967}, \cite[Def.~1.1.1]{H1982}}]
    A Fréchet space is a real complete metrizable locally convex topological vector space.
\end{definition}

\begin{definition}[Fréchet differentiability {\cite[Def.~7.1.5]{S1989}, \cite[Def.~8.1]{M1980}, \cite[Def.~3.1,~3.2]{L1988}}]
    Let $V$ and $W$ be topological vector spaces and $O \subset V$ be an open subset.
    A map $f \colon O \to W$ belongs to the class $\C^1(O, W)$ if for every $x \in O$ and every $v \in V$, the limit
    \begin{equation*}
        \D f(x; v)
        \coloneqq \lim_{t \to 0} \frac{1}{t} \big( f(x + t v) - f(x) \big)
    \end{equation*}
    exists and if the resulting map $\D f \colon O \times V \to W$ is (jointly!) continuous.

    We define higher derivatives in the usual inductive fashion, see \cite[Def.~7.1.7]{S1989} and we define the class of \textit{smooth} functions as $\C^{\infty}(O, V) \coloneqq \bigcap_{k \in \N} \C^k(O, V)$.
\end{definition}

\begin{remark}
If $f \in \C^1(O, W)$, then $f$ is continuous.
\end{remark}

For the definitions of Fréchet manifold and Fréchet bundle, consult 
\cite[Def.~4.1.1,~4.3.1]{H1982}.

\begin{definition}[Fréchet topology on $\widetilde{\P}(\Omega)$]
    The  (tamely graded) Fréchet topology on $\widetilde{\P}(\Omega)$ is generated by the family of seminorms $(\| \cdot \|_k)_{k \in \N}$, where $\| f \|_{k} \coloneqq \sup_{x \in \Omega} \sum_{| \alpha | \le k} \left| \partial^{\alpha} f(x) \right|$ for $f \in \widetilde{\P}(\Omega)$.
    Hence, a sequence $(f_n)_{n \in \N} \in \widetilde{\P}(\Omega)$ converges to $f \in \widetilde{\P}(\Omega)$ if and only if $\partial^{\alpha} f_n \to \partial^{\alpha} f$ uniformly for $n \to \infty$ for all $\alpha \in \N_0^d$, where $d$ is the dimension of the manifold $\Omega$. 
\end{definition}

\begin{remark}[Tangent space to the density manifold {\cite[p.~717--718]{L1988}}]
    The tangent space $T_{\rho} \widetilde{\P}(\Omega)$ is isomorphic to $\mathcal {GX}(\Omega)$, the space of gradient vector fields, which is given meaning by the following: the space of sections of the tangent bundle of $\Omega$, $\Gamma(T \Omega)$, can be decomposed as $\mathcal{GX}(\Omega) \oplus \{ X \in \mathcal{X}(\Omega): \nabla \cdot X = 0 \}$, the latter being the Fréchet space of divergence-free (w.r.t. the volume form) vector fields and orthogonality being understood with respect to $\llangle \cdot, \cdot \rrangle_{\rho} \colon \Gamma(T \Omega) \times \Gamma(T \Omega) \to \R$, $(X, Y) \mapsto \int_{\Omega} \langle X, Y \rangle \rho$ \cite[Cor.~3.6]{L1988}. This is the Hodge decomposition, which is sometimes interpreted as a generalization of the Helmholtz decomposition \cite[p.~713]{L1988}.
\end{remark}

\begin{remark}
    We do not have the duality $\big(T_{\rho} \widetilde{\P}(\Omega)\big)^* \cong T_{\rho}^* \widetilde{\P}(\Omega)$, like in the setting of Banach manifolds, since the dual of a Fréchet space $X$ is a Fréchet space if and only if $X$ is already a Banach space.
\end{remark}

\begin{remark}[Smoothness definition for the metric tensor]
    Saying that $\rho \mapsto G_{\rho}$ is smooth means that, given any open subset $U \subset \tilde{\P}(\Omega)$, and any (smooth) vector fields $X$ and $Y$ on $U$, the real function $g(X, Y) \colon \widetilde{\P}(\Omega) \to \R$, $\rho \mapsto g_{\rho}(X_{\rho}, Y_{\rho})$ is smooth.
\end{remark}

\section{Well-known theorems and auxiliary propositions} \label{sec:well-knwon}
Since we deal with Gaussians and the Kullback-Leibler divergence, let us first state these formulas for their derivatives with respect to different parameters.

\begin{prop} \label{prop:GaussCalcationRules}
    Let $\theta = (\mu, \Sigma) \in \R^d \times \Sym_+d(D)$ and $\rho_{\theta}$ the density of a random variable $X \sim \NN(\mu, \Sigma)$.
    We have
    \begin{align*}
        \nabla_x \rho_{\theta}(x)
        & = - \rho_{\theta}(x) \Sigma^{-1}(x - \mu) \\
        \nabla_{\theta} \rho_{\theta}(x)
        & = \rho_{\theta}(x) \begin{pmatrix}
            \Sigma^{-1} (x - \mu) \\
            \frac{1}{2} \left[\Sigma^{-1} (x - \mu) (x - \mu)^{\tT} - \id\right] \Sigma^{-1}
        \end{pmatrix}
    \end{align*}
    as well as (see, e.g. \cite[Lemma~G.1]{LGBP2024})
    \begin{equation*}
        \int_{\R^d} (x - \mu)^{\tT} A (x - \mu) \rho_{\theta}(x) \d{x}
        = \tr(A \Sigma)
        \quad \text{and} \quad
        \int_{\R^d} b^{\tT} (x - \mu) A (x - \mu) \rho_{\theta}(x) \d{x}
        = A \Sigma b
    \end{equation*}
    for any $A \in \R^{d \times d}$ and $b \in \R^d$.

\end{prop}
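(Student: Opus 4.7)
The plan is to compute everything from the explicit form of the Gaussian density together with its logarithm, and then read off the derivatives and moments using elementary matrix calculus.

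First, I would recall
\begin{equation*}
    \log \rho_{\theta}(x)
    = -\tfrac{d}{2} \log(2\pi) - \tfrac{1}{2} \log \det \Sigma - \tfrac{1}{2} (x - \mu)^{\tT} \Sigma^{-1} (x - \mu).
\end{equation*}
Differentiating with respect to $x$ and using the symmetry of $\Sigma^{-1}$ gives $\nabla_x \log \rho_{\theta}(x) = -\Sigma^{-1}(x - \mu)$; multiplying by $\rho_{\theta}(x)$ yields the first formula. For the gradient with respect to $\theta = (\mu, \Sigma)$, I would split the computation into its two blocks. The $\mu$-block is analogous to the $x$-computation up to a sign, producing $\Sigma^{-1}(x-\mu)$. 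The $\Sigma$-block uses the two standard matrix-calculus identities $\frac{\partial}{\partial \Sigma} \log \det \Sigma = \Sigma^{-1}$ (Jacobi's formula, valid on $\Sym_+(\R;d)$ with the convention used here) and $\frac{\partial}{\partial \Sigma} (x-\mu)^{\tT} \Sigma^{-1} (x-\mu) = -\Sigma^{-1}(x-\mu)(x-\mu)^{\tT} \Sigma^{-1}$. Combining them gives $\nabla_\Sigma \log \rho_\theta(x) = \tfrac{1}{2}\bigl[\Sigma^{-1}(x-\mu)(x-\mu)^{\tT} - \id\bigr]\Sigma^{-1}$, and multiplying by $\rho_\theta(x)$ gives the $\Sigma$-block of the claim.

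For the two moment integrals, I would use the trace trick. Writing the scalar $(x-\mu)^{\tT} A (x-\mu) = \tr\bigl(A (x-\mu)(x-\mu)^{\tT}\bigr)$ and exchanging trace and expectation yields $\int (x-\mu)^{\tT} A (x-\mu) \rho_\theta(x)\,\d x = \tr\bigl(A\, \E[(X-\mu)(X-\mu)^{\tT}]\bigr) = \tr(A\Sigma)$, which is the third identity. For the last identity, I would note that $b^{\tT}(x-\mu)$ is a scalar and can be commuted inside the product, so the integrand equals $A(x-\mu)(x-\mu)^{\tT} b \cdot \rho_\theta(x)$; taking expectation and using $\E[(X-\mu)(X-\mu)^{\tT}] = \Sigma$ gives $A\Sigma b$.

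The only subtle point, which is not really an obstacle, is the convention for $\frac{\partial}{\partial \Sigma}$ on the constrained set $\Sym_+(\R;d)$: one can either differentiate as if $\Sigma$ were a general matrix and then symmetrize, or differentiate directly in symmetric directions. Both conventions produce the formulas stated above without any extra factor of two, since the expression $\tfrac{1}{2}[\Sigma^{-1}(x-\mu)(x-\mu)^{\tT} - \id]\Sigma^{-1}$ is already the symmetric tangent direction encoded via the natural identification used in \cref{thm:3.1}. I would note this convention explicitly at the start of the $\Sigma$-block computation to avoid ambiguity.
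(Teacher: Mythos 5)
Your proof is correct. The paper states this proposition without proof in its appendix of auxiliary results (citing \cite[Lemma~G.1]{LGBP2024} for the two moment identities), so there is no paper proof to compare against; your computation via $\nabla \log \rho_{\theta}$, the matrix-calculus identities for $\log\det\Sigma$ and $(x-\mu)^{\tT}\Sigma^{-1}(x-\mu)$, and the trace trick plus scalar commutation for the moments is the standard derivation and matches the conventions used elsewhere in the paper (in particular the $\Sigma$-block without an extra factor of two, as you note).
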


\begin{prop}[Kullback-Leibler divergence of Gaussians and its derivative] \label{prop:functionalderivative_KL}
    For $(\mu, \Sigma), (\nu, Q) \in \R^d \times \Sym_+(d)$ we have 
    \begin{equation*}
        \KL(\NN(\mu, \Sigma) \mid \NN(\nu, Q))
        = \frac{1}{2} \left( \tr(Q^{-1} \Sigma) - d + (\nu - \mu)^{\tT} Q^{-1} (\nu - \mu) + \ln\left(\frac{\det(Q)}{\det(\Sigma)}\right)\right)
    \end{equation*}
    and thus
    \begin{equation*}
        \nabla_{(\mu, \Sigma)} \KL(\NN(\mu, \Sigma) \mid \NN(\nu, Q))
        = \begin{pmatrix}
            Q^{-1} (\mu - \nu) \\
            \frac{1}{2} \left(Q^{-1} - \Sigma^{-1}\right)
        \end{pmatrix}.
    \end{equation*}    
\end{prop}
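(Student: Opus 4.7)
The plan is to prove the closed form by direct computation from the definition of the Kullback-Leibler divergence, and then differentiate the resulting scalar expression summand by summand using standard matrix calculus.

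First, I would write
\begin{equation*}
    \KL(\NN(\mu, \Sigma) \mid \NN(\nu, Q))
    = \int_{\R^d} \rho_{(\mu, \Sigma)}(x) \bigl[ \ln \rho_{(\mu, \Sigma)}(x) - \ln \rho_{(\nu, Q)}(x) \bigr] \d{x},
\end{equation*}
substitute the explicit Gaussian log-densities, and note that the $-\tfrac{d}{2} \ln(2\pi)$ terms cancel in the difference. What remains is an affine combination of three expectations under $X \sim \NN(\mu, \Sigma)$: a constant log-determinant term, the quadratic form $(X - \mu)^{\tT} \Sigma^{-1} (X - \mu)$, and the quadratic form $(X - \nu)^{\tT} Q^{-1} (X - \nu)$.

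Next, I would evaluate these expectations using \cref{prop:GaussCalcationRules}. The constant contributes $\tfrac{1}{2} \ln(\det(Q) / \det(\Sigma))$. The first quadratic expectation is $\tr(\Sigma^{-1} \Sigma) = d$. For the second, I would split $X - \nu = (X - \mu) + (\mu - \nu)$, expand, and use that the cross term has zero mean, giving $\tr(Q^{-1} \Sigma) + (\mu - \nu)^{\tT} Q^{-1} (\mu - \nu)$. Combining these three pieces with the correct signs yields the claimed closed form.

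For the gradient, I would differentiate term by term. Only $\tfrac{1}{2} (\nu - \mu)^{\tT} Q^{-1} (\nu - \mu)$ depends on $\mu$, and by symmetry of $Q^{-1}$ its $\mu$-gradient is $Q^{-1}(\mu - \nu)$. For the $\Sigma$-gradient, the only two $\Sigma$-dependent summands are $\tfrac{1}{2} \tr(Q^{-1} \Sigma)$, whose gradient is $\tfrac{1}{2} Q^{-1}$, and $-\tfrac{1}{2} \ln \det(\Sigma)$, whose gradient is $-\tfrac{1}{2} \Sigma^{-1}$ (using Jacobi's formula $\nabla_{\Sigma} \ln \det \Sigma = \Sigma^{-\tT} = \Sigma^{-1}$ since $\Sigma \in \Sym_+(d)$). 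Adding these yields $\tfrac{1}{2}(Q^{-1} - \Sigma^{-1})$.

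There is essentially no hard step; the only care needed is consistency of conventions when differentiating over the symmetric submanifold $\Sym_+(d)$. The two standard identities $\nabla_\Sigma \tr(Q^{-1} \Sigma) = Q^{-1}$ and $\nabla_\Sigma \ln \det \Sigma = \Sigma^{-1}$ both produce symmetric results when $Q$ and $\Sigma$ are symmetric, so no symmetrization step is needed and the stated gradient follows directly.
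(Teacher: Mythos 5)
Your proof is correct. The paper states this proposition in its appendix as a well-known fact without giving a proof, so there is no paper argument to compare against; your direct computation from the definition — expanding the Gaussian log-densities, evaluating the two quadratic-form expectations via the decomposition $X - \nu = (X - \mu) + (\mu - \nu)$, and then differentiating the resulting scalar term by term using $\nabla_\Sigma \tr(Q^{-1}\Sigma) = Q^{-1}$ and Jacobi's formula — is the standard route and is carried out correctly, including the observation that both $\Sigma$-derivatives are already symmetric so no extra symmetrization is required.
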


There is a simple formula for the Moore-Penrose pseudoinverse of low-rank matrices with special structure.

\begin{lemma} \label{lemma:pseudo_inverse_rank_2}
    If $v_1 \perp v_2$ and $A = \lambda_1 v_1 v_1^{\tT} + \lambda_2 v_2 v_2^{\tT}$ for some $\lambda_1, \lambda_2 \ne 0$ and $\| v_1 \| = \| v_2 \| = 1$, then $A^{\dagger} = \frac{1}{\lambda_1} v_1 v_1^{\tT} + \frac{1}{\lambda_2} v_2 v_2^{\tT}$.
\end{lemma}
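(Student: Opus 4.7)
The plan is to verify the four Penrose conditions that characterize the Moore--Penrose pseudoinverse. Let me set $B \coloneqq \frac{1}{\lambda_1} v_1 v_1^{\tT} + \frac{1}{\lambda_2} v_2 v_2^{\tT}$; the goal is to show $B = A^{\dagger}$ by confirming $A B A = A$, $B A B = B$, and that both $A B$ and $B A$ are symmetric.

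First I would exploit the orthonormality relations $v_i^{\tT} v_j = \delta_{ij}$ that follow from the hypotheses $v_1 \perp v_2$ and $\| v_1 \| = \| v_2 \| = 1$. Expanding the product of the two sums of rank-one matrices, every cross term contains the inner product $v_1^{\tT} v_2 = 0$ and therefore vanishes, so one obtains $A B = B A = v_1 v_1^{\tT} + v_2 v_2^{\tT}$. Since this matrix is manifestly symmetric, Penrose conditions (3) and (4) are immediate. Multiplying once more on the right by $A$ (resp.\ by $B$) and reusing orthonormality yields $A B A = \lambda_1 v_1 v_1^{\tT} + \lambda_2 v_2 v_2^{\tT} = A$ and $B A B = \frac{1}{\lambda_1} v_1 v_1^{\tT} + \frac{1}{\lambda_2} v_2 v_2^{\tT} = B$, closing the verification.

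A conceptually cleaner alternative is to observe that $A$ is already presented in spectral form: it is symmetric with orthonormal eigenvectors $v_1, v_2$ corresponding to the non-zero eigenvalues $\lambda_1, \lambda_2$, and every vector in $\{ v_1, v_2 \}^{\perp}$ lies in $\ker(A)$. The pseudoinverse of a symmetric matrix is obtained by inverting each non-zero eigenvalue while retaining the eigendecomposition, which produces exactly the formula in the statement. Either route reduces to elementary rank-one algebra, so I do not anticipate any genuine obstacle here; the only thing to watch is bookkeeping of the four Penrose identities. The lemma will presumably be invoked to handle pseudoinverses of low-rank generalized-bilinear-kernel-plus-regularization matrices, where an explicit closed form is useful.
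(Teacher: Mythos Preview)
Your proposal is correct and matches the paper's own proof almost exactly: the paper also computes $A B = B A = v_1 v_1^{\tT} + v_2 v_2^{\tT}$, notes this is symmetric, and then verifies $A B A = A$ and $B A B = B$. Your spectral-decomposition remark is a valid alternative viewpoint but not needed.
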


\begin{proof}
    One can easily verify that $A^{\dagger} A = A A^{\dagger} = v_1 v_1^{\tT} + v_2 v_2^{\tT}$ is symmetric and, consequently, $A A^{\dagger} A = A$ and $A^{\dagger} A A^{\dagger} = A^{\dagger}$.
\end{proof}

Lastly, we will use the following well-known proposition about the solution of the so-called Lyapunov equation.
\begin{prop}[Lyapunov equation]
    Let $P, Q \in \Sym(d)$.
    Then there exists a unique solution $X \in \Sym(d)$ of the Lyapunov equation
    \begin{equation*}
        P X + X P
        = Q.
    \end{equation*}
\end{prop}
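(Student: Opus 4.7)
The plan is to diagonalize $P$ and reduce the matrix equation $PX + XP = Q$ to a decoupled scalar system. Note that the statement as written requires an additional spectral hypothesis on $P$ (namely $\lambda_i + \lambda_j \neq 0$ for every pair of eigenvalues, e.g., $P$ positive definite, as is the case throughout the ASVGD analysis where $P$ plays the role of an SPD matrix); I would make this hypothesis explicit at the start of the proof.

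First, I would use the spectral theorem: write $P = U D U^{\tT}$ with $U \in \R^{d \times d}$ orthogonal and $D = \diag(\lambda_1, \ldots, \lambda_d)$. Conjugating the Lyapunov equation by $U$, and setting $\tilde X \coloneqq U^{\tT} X U$ and $\tilde Q \coloneqq U^{\tT} Q U$, the equation becomes $D \tilde X + \tilde X D = \tilde Q$, i.e., componentwise
\begin{equation*}
    (\lambda_i + \lambda_j) \tilde X_{ij} = \tilde Q_{ij}, \qquad i, j \in \{1, \ldots, d\}.
\end{equation*}
Under the spectral assumption $\lambda_i + \lambda_j \neq 0$ for all $i, j$, this determines every entry uniquely via $\tilde X_{ij} = \tilde Q_{ij}/(\lambda_i + \lambda_j)$, giving existence and uniqueness of $\tilde X \in \R^{d \times d}$, and hence of $X = U \tilde X U^{\tT}$.

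Next, I would verify symmetry: since $Q \in \Sym(d)$, we have $\tilde Q = \tilde Q^{\tT}$, so $\tilde Q_{ij} = \tilde Q_{ji}$, and symmetry of the denominators $\lambda_i + \lambda_j = \lambda_j + \lambda_i$ gives $\tilde X_{ij} = \tilde X_{ji}$, i.e., $\tilde X \in \Sym(d)$, and thus $X = U \tilde X U^{\tT} \in \Sym(d)$. Alternatively, one can note that transposing the Lyapunov equation yields $X^{\tT} P + P X^{\tT} = Q^{\tT} = Q$, so $X^{\tT}$ is also a solution, and uniqueness in $\R^{d \times d}$ forces $X = X^{\tT}$.

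Finally, for a coordinate-free formulation one can recast the argument via the Kronecker product: the linear map $\L \colon X \mapsto PX + XP$ on $\R^{d \times d}$ corresponds to $I \otimes P + P \otimes I$ acting on $\vec(X)$, whose eigenvalues are $\lambda_i + \lambda_j$; invertibility of $\L$ on $\R^{d \times d}$ under the spectral hypothesis is then immediate, and symmetry is enforced as above. No step is truly hard here; the only subtlety is recognizing that the statement silently needs the non-cancellation condition on the spectrum of $P$, which should be spelled out explicitly in the proof.
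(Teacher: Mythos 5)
Your proof is correct. The paper itself gives no proof of this proposition: it is explicitly introduced as a ``well-known proposition'' and invoked once in \cref{subsec:proof_Gauss_Stein_tensor}, where $P = \Sigma A \Sigma$ is symmetric positive definite, so the spectral hypothesis you identify does hold in the actual application. Your most valuable observation is that the proposition \emph{as stated} is false: taking $P = 0$ destroys both existence (for $Q \ne 0$) and uniqueness (for $Q = 0$), and more generally a single pair of eigenvalues of $P$ summing to zero makes the operator $X \mapsto PX + XP$ singular. The statement should be amended to require $\lambda_i(P) + \lambda_j(P) \ne 0$ for all $i, j$ (for instance, $P \in \Sym_+(\R; d)$). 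Your diagonalization argument, the symmetry verification by uniqueness (transposing the equation shows $X^{\tT}$ is also a solution), and the Kronecker-product restatement $I \otimes P + P \otimes I$ are all standard and sound; any one of them suffices, and the Kronecker formulation meshes cleanly with the $A \oplus B \coloneqq A \otimes B + B \otimes A$ notation already used later in the paper.
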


Lastly, note that for any matrix $B \in \R^{d \times d}$, the matrix with rows $B X_t^{(j)}$ is given by $X_t B^{\tT}$ and the $(i, j)$-th entry of $P P^{\tT}$ is precisely $\langle p^{(i)}, p^{(j)} \rangle_{\R^d}$.
We thus have
\begin{equation*}
    \sum_{m, \ell = 1}^{N} \langle p^{(\ell)}, p^{(m)} \rangle K(X^{(m)}, X^{(\ell)})
    = \tr(P^{\tT} K P).
\end{equation*}

\paragraph{The Kronecker product}
The Kronecker product $\otimes$ is not commutative.
However, for any matrices $A, B$ of arbitrary size, there exist two permutation matrices $P_1, P_2$ such that $A \otimes B = P_1 (B \otimes A) P_2$.
If the dimensions of $A$ and $B$ are identical, then we can choose $P_1 = P_2$, so that $A \otimes B$ and $B \otimes A$ have the same spectrum.

\begin{prop}
    Let $A, B$ be quadratic matrices.
    The eigenvalues of $A \times B$ are all of the form $\lambda_A + \lambda_B$, where $\lambda_A$ is an eigenvalue of $A$ and $\lambda_B$ is an eigenvalue of $B$.
\end{prop}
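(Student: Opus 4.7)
The statement as written uses ``$A \times B$'' for what, in context of the preceding discussion of $A \otimes I + I \otimes B$-type expressions (compare the matrix $(A \otimes \id) + \cdots$ appearing in \cref{thm:3.1}), must mean the Kronecker sum $A \oplus B \coloneqq A \otimes I_n + I_m \otimes B$, whose eigenvalues are claimed to be exactly the pairwise sums $\lambda_A + \lambda_B$. The plan is to prove this in two steps: first exhibit eigenvectors of $A \oplus B$ of the form $v \otimes w$ where $v, w$ are eigenvectors of $A$ and $B$ respectively, and second argue that no other eigenvalues can occur, for which the cleanest route is Schur triangularization.

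First I would recall the basic mixed-product identity $(A_1 \otimes B_1)(A_2 \otimes B_2) = (A_1 A_2) \otimes (B_1 B_2)$. Applying this to $(A \otimes I_n)(v \otimes w) = (A v) \otimes w$ and $(I_m \otimes B)(v \otimes w) = v \otimes (B w)$, we obtain that if $A v = \lambda_A v$ and $B w = \lambda_B w$, then
\begin{equation*}
    (A \oplus B)(v \otimes w)
    = (\lambda_A v) \otimes w + v \otimes (\lambda_B w)
    = (\lambda_A + \lambda_B)(v \otimes w).
\end{equation*}
This shows that every pairwise sum $\lambda_A + \lambda_B$ is an eigenvalue of $A \oplus B$, and accounts for $m n$ eigenvalues counted with algebraic multiplicity provided $A$ and $B$ are diagonalizable.

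To cover the general (possibly non-diagonalizable) case and verify that these exhaust the spectrum, I would invoke the Schur decomposition: write $A = U_A T_A U_A^*$ and $B = U_B T_B U_B^*$ with $U_A, U_B$ unitary and $T_A, T_B$ upper triangular with the eigenvalues of $A$, resp. $B$, on their diagonals. Using the mixed-product identity again,
\begin{equation*}
    A \oplus B
    = (U_A \otimes U_B)\bigl(T_A \otimes I_n + I_m \otimes T_B\bigr)(U_A \otimes U_B)^*.
\end{equation*}
The matrix $T_A \otimes I_n + I_m \otimes T_B$ is upper triangular (since a Kronecker product of upper triangular matrices is upper triangular, and the sum of two upper triangular matrices is upper triangular), and a direct index computation shows its diagonal entries are exactly the $mn$ sums $(T_A)_{ii} + (T_B)_{jj} = \lambda_A^{(i)} + \lambda_B^{(j)}$. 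Since a unitary conjugation preserves the spectrum and the spectrum of an upper triangular matrix is read off from its diagonal, this completes the characterization.

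The main obstacle, if any, is purely bookkeeping: one must be careful about the ordering convention for $\otimes$ (i.e., whether $(X \otimes Y)_{(i,j),(k,\ell)} = X_{ik} Y_{j\ell}$ or the transposed convention) so as to verify that $T_A \otimes I_n + I_m \otimes T_B$ really is upper triangular in whichever ordering one chooses. Beyond that, both the existence part (via $v \otimes w$) and the exhaustion part (via Schur) are short once the mixed-product identity is in hand.
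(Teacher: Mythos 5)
The paper states this result in the appendix of ``well-known theorems and auxiliary propositions'' and gives no proof, so there is no internal argument to compare against. Your reading of ``$A \times B$'' as the Kronecker sum $A \otimes I_n + I_m \otimes B$ is the only interpretation under which the claim holds — the Kronecker product $A \otimes B$ has eigenvalues $\lambda_A \lambda_B$, and even the paper's own later convention $A \oplus B \coloneqq A \otimes B + B \otimes A$ produces expressions like $\lambda_A^{(i)} \lambda_B^{(j)} + \lambda_B^{(i)} \lambda_A^{(j)}$ on diagonal test cases rather than sums. Your two-step argument is correct and complete: the $v \otimes w$ eigenvector computation shows each pairwise sum occurs, and the Schur-plus-mixed-product step (which is the only part strictly required, since the statement as written only asserts that \emph{every} eigenvalue is such a sum) shows nothing else can occur, while simultaneously handling non-diagonalizable $A$, $B$ and delivering the correct algebraic multiplicities. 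Your caution about the index ordering convention in checking triangularity of $T_A \otimes I_n + I_m \otimes T_B$ is well placed and is the only place a slip could hide.
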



\begin{prop}[Kronecker product and vectorization] \label{prop:Kronecker_Vectorization}
    For $B \in \R^{m \times n}$, $C \in \R^{\ell, k}$ and $V \in \R^{k \times n}$ we have
    \begin{equation*}
        (B \otimes C) \vec(V)
        = \vec(C V B^{\tT}).
    \end{equation*}    
\end{prop}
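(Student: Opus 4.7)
The plan is to reduce the identity to the rank-one case via bilinearity, then verify it on elementary basis matrices. Observe that both maps $V \mapsto (B \otimes C) \vec(V)$ and $V \mapsto \vec(C V B^{\tT})$ are linear in $V \in \R^{k \times n}$. Since any such $V$ decomposes in the standard basis $\{ e_p \}$ as $V = \sum_{i, j} V_{i j} e_i e_j^{\tT}$, it suffices to verify the claimed equality for $V = e_i e_j^{\tT}$, where $i \in \{ 1, \ldots, k \}$ and $j \in \{ 1, \ldots, n \}$.

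For this elementary case I would invoke two standard auxiliary facts. The first is the rank-one vectorization identity $\vec(u v^{\tT}) = v \otimes u$ for column vectors $u, v$; this is immediate from the column-stacking convention of $\vec$, since the $s$-th column of $u v^{\tT}$ is $v_s u$. The second is the Kronecker mixed-product rule $(A_1 \otimes A_2)(B_1 \otimes B_2) = (A_1 B_1) \otimes (A_2 B_2)$, valid whenever the inner block dimensions match. Applying the first identity to $V = e_i e_j^{\tT}$ gives $\vec(V) = e_j \otimes e_i$, and then the mixed-product rule yields
\begin{equation*}
    (B \otimes C) \vec(V) = (B \otimes C)(e_j \otimes e_i) = (B e_j) \otimes (C e_i).
\end{equation*}
On the other hand, $C V B^{\tT} = (C e_i)(B e_j)^{\tT}$ is itself rank-one, so the first identity applied again gives $\vec(C V B^{\tT}) = (B e_j) \otimes (C e_i)$, matching the previous display.

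I do not expect a real obstacle here: the proof is essentially a two-line computation once the reduction to basis matrices is made, and is contingent only on fixing a consistent column-stacking convention for $\vec$. An equivalent alternative, preferable if one would rather avoid the auxiliary identities, is a direct indexwise verification, in which one writes the $\big((j - 1) \ell + p\big)$-th entry of each side using the block structure of $B \otimes C$ and the explicit formula for $\vec$, and checks that both evaluate to $\sum_{r, s} C_{p, r} V_{r, s} B_{j, s}$; I would choose whichever approach aligns best with the notational conventions already in use in the appendix.
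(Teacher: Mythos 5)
Your argument is correct: the reduction by linearity to elementary basis matrices $e_i e_j^{\tT}$, combined with the rank-one identity $\vec(u v^{\tT}) = v \otimes u$ and the Kronecker mixed-product rule, is a clean and complete verification, and the dimensions all line up ($Be_j \in \R^m$, $Ce_i \in \R^{\ell}$, so both sides lie in $\R^{m\ell}$ under the column-stacking convention). Note, however, that there is nothing in the paper to compare against: this proposition appears in the appendix section of well-known auxiliary results and is stated without proof, so your argument is a supplement rather than an alternative to any argument in the text.
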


\section{Proofs and calculations}
\subsection{Mean field limits and particle approximations} \label{subsec:particleFormulation}

In this subsection, we give an informal justification for turning the coupled PDEs dealt with above into particle schemes.

First note that for differentiable functions $g \colon (0, \infty) \to \Omega$ and $f \colon (0, \infty) \times \Omega \to \R^d$ the chain rule implies
\begin{equation} \label{eq:chain_rule}
    \frac{\d}{\d t} f(t, g(t))
    = (\partial_2 f)(t, g(t)) \cdot g'(t) + (\partial_1 f)(t, g(t)).
\end{equation}
(Some authors referred to this identity as the material derivative in fluid dynamics.)
Consider a smooth solution $(\rho_t, \Phi_t)_{t > 0} \subset \widetilde{\P}(\Omega) \times T^* \widetilde{\P}(\Omega)$ satisfying $\partial_t \rho_t = - [G_{\rho_t}]^{-1}(\nabla \Phi_t)$, where $\Phi_t \in T_{\rho_t}^* \widetilde{\P}(\Omega)$ for each $t > 0$ and let $\Phi \colon (0, \infty) \times \Omega \to \R$, $(t, x) \mapsto \Phi_t(x)$.
Then, the gradient and Hessian matrix of $\Phi_t \colon \Omega \to \R$ can be rewritten as follows: $\nabla \Phi_t(x) = \partial_2 \Phi(t, x) \in \R^d$ and $\nabla^2 \Phi_t(x) = \partial_2^2 \Phi(t, x) \in \R^{d \times d}$ for $t > 0$ and $x \in \Omega$.

Let $X \colon (0, \infty) \to \Omega$, $t \mapsto X_t$ be the differentiable trajectory of a particle.
The velocity along that trajectory is $\dot{X} \colon (0, \infty) \to \Omega \times \R^d$.
Under regularity assumptions (detailed e.g. in \cite[Lemma~8.1.4]{AGS08}) the continuity equation $\partial_t \rho_t = - [G_{\rho_t}]^{-1}(\nabla \Phi_t)$ implies that the particles are driven by the velocity field, so in the case of \eqref{eq:Accelerated_Stein_Flow} we have
\begin{equation*}
    \dot{X}_t
    = \int_{\Omega} K(X_t, y) \rho_t(y) \nabla \Phi_t(y) \d{y} + \eps \nabla \Phi_t(X_t).
\end{equation*}
Now, define $V \colon (0, \infty) \to T_{X_t} \Omega \cong \R^d$, $t \mapsto V_t \coloneqq \nabla \Phi_t(X_t)$.
By \eqref{eq:chain_rule} we have
\begin{equation} \label{eq:dotVt}
    \dot{V}_t
    \coloneqq \frac{\d}{\d t} V_t
    = \frac{\d}{\d t} (\partial_2 \Phi)(t, X_t)
    = (\partial_2^2 \Phi)(t, X_t) \dot{X}_t + (\partial_1 \partial_2 \Phi)(t, X_t)
    = \nabla^2 \Phi_t(X_t) \dot{X}_t + (\partial_t \nabla \Phi_t)(X_t).
\end{equation}
By applying $\nabla$ to the second equation in \eqref{eq:Accelerated_Stein_Flow} and assuming we can exchange the order of differentiation, and evaluating at $x = X_t$, we obtain 
\begin{align*}
    \partial_t \nabla \Phi_t(X_t)
    & = - \alpha_t \nabla \Phi_t(X_t)
    - \nabla \int_{\Omega} K(X_t, y) \langle \nabla \Phi_t(y), \nabla \Phi_t(X_t) \rangle_{\R^n} \rho_t(y) \d{y} \\
    & \qquad - [\nabla \delta E(\rho_t)](X_t)
     - \frac{\eps}{2} \nabla \| \nabla \Phi_t(X_t) \|_2^2 \\
    & = - \alpha_t \nabla \Phi_t(X_t)
    - \underbrace{\int_{\Omega} \nabla_1 K(X_t, y) \langle \nabla \Phi_t(y), \nabla \Phi_t(X_t) \rangle_{\R^n} \rho_t(y) \d{y}}_{= \int_{\Omega} \nabla_1 K(X_t, y) \nabla \Phi_t(y)^{\tT} \rho_t(y) \d{y} \cdot V_t} \\
    & \qquad \underbrace{- \int_{\Omega} K(X_t, y) \nabla^2 \Phi_t(X_t) \nabla \Phi_t(y) \rho_t(y) \d{y} - \frac{\eps}{2} \nabla \| \nabla \Phi_t(X_t) \|_2^2}_{= - \nabla^2 \Phi_t(X_t) \dot{X}_t}
     - [\nabla \delta E(\rho_t)](X_t) \\
    & = - \alpha_t \nabla \Phi_t(X_t)
    - \int_{\Omega} \nabla_1 K(X_t, y) \nabla \Phi_t(y)^{\tT} \rho_t(y) \d{y} \cdot V_t 
    - \nabla^2 \Phi_t(X_t) \dot{X}_t
    - [\nabla \delta E(\rho_t)](X_t),
\end{align*}
where we used the product rule, and assumed that we can exchange integration with differentiation in the last line.
The term $ - \nabla^2 \Phi_t(X_t) \dot{X}_t$ cancels.

\subsection{Proof of \texorpdfstring{\cref{lemma:central_Lemma}}{Lemma about acceleration in the Stein flow}}  \label{subsec:proof}
\begin{proof}
    We have
    \begin{align*}
        \dot{Y}_t
        &= \int_{\Omega} (\nabla_1 K)(X_t, y) \langle X'(t), \nabla \Phi_t(y) \rangle_{\R^d} \rho_t(y) \d{y} \\
        & \qquad + \int_{\Omega} K(X_t, y) (\partial_t \nabla \Phi_t)(y) \rho_t(y) \d{y} \\
        & \qquad + \int_{\Omega} K(X_t, y) \nabla \Phi_t(y) (\partial_t \rho_t)(y) \d{y} 
        + \eps \dot{V}_t \\
        \overset{\eqref{eq:Accelerated_Stein_Flow}, \eqref{eq:Acc_Stein}}&{=} 
        \int_{\Omega} \int_{\Omega} (\nabla_1 K)(X_t, y) K(X_t, z)  \left\langle \nabla \Phi_t(z), \nabla \Phi_t(y) \right\rangle_{\R^d} \rho_t(z) \rho_t(y) \d{z} \d{y} \\
        & \qquad + \eps \int_{\Omega} (\nabla_1 K)(X_t, y) \langle V_t, \nabla \Phi_t(y) \rangle_{\R^d} \rho_t(y) \d{y} \\
        & \qquad + \int_{\Omega} K(X_t, y) \bigg(- \alpha_t \nabla \Phi_t(y) - \nabla_y \left[\int_{\Omega} K(z, y) \langle \nabla \Phi_t(z), \nabla \Phi_t(y) \rangle_{\R^n} \rho_t(z) \d{z}\right] \\
        & \qquad \qquad \qquad - \frac{\eps}{2} \nabla_y \| \nabla \Phi_t(y) \|_2^2 - \nabla \delta E(\rho_t)(y)\bigg) \rho_t(y) \d{y} \\
        & \qquad + \int_{\Omega} K(X_t, y) \nabla \Phi_t(y) \left( - \nabla \cdot \left(\rho_t(y) \left(
        \int_{\Omega} K(y, z) \rho_t(z) \nabla \Phi_t(z) \d{z} + \eps \nabla \Phi_t(y)\right)
        \right) \right) \d{y} 
        + \eps \dot{V}_t \\
        \overset{(\star)}&{=} \int_{\Omega} \int_{\Omega} (\nabla_1 K)(X_t, y) K(X_t, z)  \left\langle \nabla \Phi_t(z), \nabla \Phi_t(y) \right\rangle_{\R^d} \rho_t(z) \rho_t(y) \d{y} \d{z} \\
        & \qquad + \eps \int_{\Omega} (\nabla_1 K)(X_t, y) \langle V_t, \nabla \Phi_t(y) \rangle_{\R^d} \rho_t(y) \d{y} \\
        & \qquad - \alpha_t  \int_{\Omega} K(X_t, y) \nabla \Phi_t(y)  \rho_t(y) \d{y}
        - \int_{\Omega} K(X_t, y) \nabla \delta E(\rho_t)(y) \rho_t(y) \d{y} \\
        & \qquad - \frac{\eps}{2} \int_{\Omega} K(X_t, y) \nabla \| \nabla \Phi_t(y) \|_2^2 \rho_t(y) \d{y} \\
        & \qquad - \int_{\Omega} \int_{\Omega} K(X_t, y) \rho_t(y) \nabla_y \left[ K(z, y) \langle \nabla \Phi_t(z), \nabla \Phi_t(y) \rangle_{\R^n} \right] \rho_t(z) \d{y} \d{z} \\
        & \qquad + \int_{\Omega} \int_{\Omega} \nabla_y \left[ K(X_t, y) \langle \nabla \Phi_t(y), \nabla \Phi_t(z) \rangle_{\R^n} \right] \rho_t(y) K(y, z) \rho_t(z) \d{y} \d{z} \\
        & \qquad + \eps \int_{\Omega} \nabla_y \big[ K(X_t, y) \nabla \Phi_t(y) \big] \rho_t(y) \nabla \Phi_t(y) \d{y}
        + \eps \dot{V}_t \\
        \overset{\eqref{eq:Acc_Stein}}&{=} \int_{\Omega} \int_{\Omega} (\nabla_1 K)(X_t, y) K(X_t, z)  \left\langle \nabla \Phi_t(z), \nabla \Phi_t(y) \right\rangle_{\R^d} \rho_t(z) \rho_t(y) \d{y} \d{z} \\
        & \qquad + \eps \int_{\Omega} (\nabla_1 K)(X_t, y) \langle V_t, \nabla \Phi_t(y) \rangle_{\R^d} \rho_t(y) \d{y}
        - \alpha_t (Y_t - \eps V_t) \\
        & \qquad - \int_{\Omega} K(X_t, y) \nabla \delta E(\rho_t)(y) \rho_t(y) \d{y} \\
        & \qquad - \frac{\eps}{2} \int_{\Omega} K(X_t, y) \nabla \| \nabla \Phi_t(y) \|_2^2 \rho_t(y) \d{y} \\
        & \qquad - \int_{\Omega} \int_{\Omega} K(X_t, y) \rho_t(y) \left[ (\nabla_2 K)(z, y) \langle \nabla \Phi_t(y), \nabla \Phi_t(z) \rangle_{\R^n} \right] \rho_t(z) \d{y} \d{z} \\
        & \qquad - \int_{\Omega} \int_{\Omega} K(X_t, y) \rho_t(y) \left[ K(z, y) \nabla^2 \Phi_t(y) \nabla \Phi_t(z) \right] \rho_t(z) \d{y} \d{z} \\
        & \qquad + \int_{\Omega} \int_{\Omega} (\nabla_2 K)(X_t, y) \langle \nabla \Phi_t(y), \nabla \Phi_t(z) \rangle_{\R^n} \rho_t(y) K(y, z) \rho_t(z) \d{y} \d{z} \\
        & \qquad + \int_{\Omega} \int_{\Omega} K(X_t, y) \nabla^2 \Phi_t(y) \nabla \Phi_t(z) \rho_t(y) K(y, z) \rho_t(z) \d{y} \d{z} \\
        & \qquad + \eps \int_{\Omega} \nabla_y \big[ K(X_t, y) \nabla \Phi_t(y) \big] \rho_t(y) \nabla \Phi_t(y) \d{y}
        + \eps \dot{V}_t,
    \end{align*}
    where in $(\star)$ we use integration by parts in the third line, the divergence theorem in every component of the integral (i.e. $- \int_{\Omega} \nabla \Psi(y) \nabla \cdot \phi(y) \d{y} = \int_{\Omega} \nabla^2 \Psi(y) \phi(y) \d{y}$ for any smooth vector field $\phi \colon \Omega \to \R^d$ and smooth function $\Psi \colon \Omega \to \R$) and that
    \begin{equation*}
        \nabla_y \big(K(X_t, y) \nabla \Phi_t(y) \big)
        = (\nabla_2 K)(X_t, y) \nabla \Phi_t(y)^{\tT}
        + K(X_t, y) \nabla^2 \Phi_t(y)
    \end{equation*}
    as well as Fubini's theorem in the first and last integrals.
    
    The terms involving the Hessian of $\Phi_t$ cancel out, so putting all double integrals together yields
    \begin{equation*} 
    \begin{aligned}
        \dot{Y}_t
        & = - \alpha_t Y_t - \int_{\Omega} K(X_t, y) \nabla \delta E(\rho_t)(y) \rho_t(y) \d{y} \\
        & \quad + \int_{\Omega} \int_{\Omega} \rho_t(y) \rho_t(z) \langle \nabla \Phi_t(z), \nabla \Phi_t(y) \rangle_{\R^n} \\
        & \qquad \qquad \cdot \bigg[ K(y, z) (\nabla_2 K)(X_t, y) - K(X_t, y) (\nabla_2 K)(z, y) + (\nabla_1 K)(X_t, y) K(X_t, z) 
        \bigg] \d{y} \d{z} \\
        & \quad + \eps \bigg( \int_{\Omega} (\nabla_1 K)(X_t, y) \langle V_t, \nabla \Phi_t(y) \rangle_{\R^n} \rho_t(y) \d{y}
        + \alpha_t V_t
        - \frac{1}{2} \int_{\Omega} K(X_t, y) \nabla \| \nabla \Phi_t(y) \|_2^2 \rho_t(y) \d{y} \\
        & \quad + \int_{\Omega} \nabla_y \big[ K(X_t, y) \nabla \Phi_t(y) \big] \nabla \Phi_t(y) \rho_t(y) \d{y}
         + \dot{V}_t\bigg).
    \end{aligned}    
    \end{equation*}
    Using that $\frac{1}{2} \nabla \| \nabla \Phi \|_2^2 = \nabla^2 \Phi \nabla \Phi$, the $\eps$-term and we obtain
    \begin{equation} \label{eq:final}
    \begin{aligned}
        \dot{Y}_t
        & = - \alpha_t Y_t - \int_{\Omega} K(X_t, y) \nabla \delta E(\rho_t)(y) \rho_t(y) \d{y} \\
        & \quad + \int_{\Omega} \int_{\Omega} \rho_t(y) \rho_t(z) \langle \nabla \Phi_t(z), \nabla \Phi_t(y) \rangle_{\R^n} \\
        & \qquad \qquad \cdot \bigg[ K(y, z) (\nabla_2 K)(X_t, y) - K(X_t, y) (\nabla_2 K)(z, y) + (\nabla_1 K)(X_t, y) K(X_t, z) 
        \bigg] \d{y} \d{z} \\
        & \quad + \eps \bigg( \int_{\Omega} \left( (\nabla_1 K)(X_t, y) \langle V_t, \nabla \Phi_t(y) \rangle_{\R^n} + (\nabla_2 K)(X_t, y) \| \nabla \Phi_t(y) \|_2^2\right) \rho_t(y) \d{y}
        + \alpha_t V_t
         + \dot{V}_t \bigg).
    \end{aligned}    
    \end{equation}
\end{proof}


\subsection{Proof of \texorpdfstring{\cref{lemma:ASVGD_fully_discrete}}{a lemma}} \label{subsec:proof_full_discretization}

\begin{proof}
    \begin{enumerate}
        \item 
        \textbf{Space discretization.}
        Since $X_t \sim \rho_t$, we can simulate \eqref{eq:Accelerated_Stein_Flow_2} using $N \in \N$ particles $(X_{t}^{(j)})_{j = 1}^{N} \subset \R^d$ and their accelerations $(Y_t^{(j)})_{j = 1}^{N} \subset \R^d$ at time $t$ by replacing $\rho_{t}$ by its empirical estimate $\frac{1}{N} \sum_{j = 1}^{N} \delta_{X_{t}^{(j)}}$.
        In terms of these particles, \eqref{eq:dot_Yt} becomes
        \begin{align*}
            \dot{Y}_t^{j}
            & = - \alpha_t Y_t^{j}
            + \frac{1}{N} \sum_{i = 1}^{N} (\nabla_2 K)(X_t^{(j)}, X_t^{(i)})
            - (K_t)_{j, i} \nabla f(X_t^{(i)})
            + \frac{1}{N^2} \sum_{i, \ell = 1}^{N} \langle V_t^i, V_t^{\ell} \rangle_{\R^n} \\
            & \qquad \quad \cdot \bigg[
                (K_t)_{i, {\ell}} (\nabla_2 K)(X_t^{(j)}, X_t^{(i)}) 
                + (K_t)_{j, \ell} (\nabla_1 K)(X_t^{(j)}, X_t^{(i)})
                - (K_t)_{j, i} (\nabla_2 K)(X_t^{\ell}, X_t^{(i)})
            \bigg].
        \end{align*}
        where $(K_t)_{i, j} \coloneqq K(X_t^{(i)}, X_t^{(j)})$ for $i, j \in \{ 1, \ldots, N \}$ and $V_t^{(\ell)} \coloneqq \nabla\Phi_{t}(X_{t}^{(\ell)}) \in \R^d$ for $\ell \in \{ 1, \ldots, N \}$ for $t > 0$.
    
        \item 
        \textbf{Time discretization.} Let us consider a forward Euler discretization in time on the time interval $[0, T]$ with $T > 0$ and with time steps $0 = t_0 < t_1 < \ldots < t_M = T$.
        This yields the step sizes $\tau_k \coloneqq t_{k + 1} - t_k > 0$, $k \in \{ 0, \ldots, M - 1 \}$ and thus the update equations
        \begin{equation} \label{eq:multidim}
            \begin{cases}
                X_{t_{k + 1}}^{(j)}
                \gets X_{t_k}^{(j)} + \tau_k Y_{t_k}^{(j)}, \\
                V_{t_{k + 1}}^{(i)} = N K_{t_{k + 1}}^{\dagger} Y_{t_k} \quad \text{(this follows by rearranging \eqref{eq:Acc_Stein})} \\
                Y_{t_{k + 1}}^{(j)}
                \gets (1 - \tau_k \alpha_{t_k}) Y_{t_k}^{(j)}
                + \frac{\tau_k}{N} \sum_{i = 1}^{N} (\nabla_2 K)(X_{t_{k + 1}}^{(j)}, X_{t_{k + 1}}^{(i)}) - K(X_{t_{k + 1}}^{(j)}, X_{t_{k + 1}}^{(i)}) \nabla f(X_{t_{k + 1}}^{(i)}) \\
                \qquad \qquad + \frac{\tau_k}{N^2} \sum_{i, \ell = 1}^{N} \langle V_{t_{k + 1}}^{(i)}, V_{t_{k + 1}}^{(\ell)} \rangle
                \bigg[ K(X_{t_{k + 1}}^{(i)}, X_{t_{k + 1}}^{(\ell)}) (\nabla_2 K)(X_{t_{k + 1}}^{(j)}, X_{t_{k + 1}}^{(i)}) \\
                \qquad \qquad + K(X_{t_{k + 1}}^{(j)}, X_{t_{k + 1}}^{(\ell)})
                (\nabla_1 K)(X_{t_{k + 1}}^{(j)}, X_{t_{k + 1}}^{(i)}) 
                - K(X_{t_{k + 1}}^{(j)}, X_{t_{k + 1}}^{(i)})
                (\nabla_2 K)(X_{t_{k + 1}}^{(\ell)}, X_{t_{k + 1}}^{(i)})
                \bigg].
            \end{cases}
        \end{equation}
        for $j \in \{ 1, \ldots, N \}$ and $k \in \{ 0, \ldots, M - 1 \}$.
        
        Like in \cite{SBC2016}, we will discretize the damping $\alpha_t$ by replacing $1 - \tau_k \alpha_{t_k}$ by $\alpha_k$, which is $\frac{k - 1}{k + 2}$ or a constant, which is the right asymptotic choice for the correct time rescaling.
        
        Plugging in the definition of the matrices in the statement of the lemma, this concludes the derivation.
    \end{enumerate}
\end{proof}

\subsection{Simplification of the particle scheme for two kernels}
\paragraph{Gaussian kernel}
The most commonly used kernel in applications is the Gaussian kernel $K_{\sigma}(x, y) \coloneqq \exp\left(-\frac{1}{2 \sigma^2} \| x - y \|_2^2\right)$ with bandwidth $\sigma^2 > 0$.
We now show how the updates from \cref{lemma:ASVGD_fully_discrete} simplify for this kernel.

\begin{example}[Energy and interaction term]
    For the Gaussian kernel, in \eqref{eq:dot_Yt}, the term in square brackets in the interaction term simplifies to
    \begin{equation*}
        \frac{1}{\sigma^2} K(x, y) \big( K(y, z) (x - z) + K(x, z) (y - x)\big).
    \end{equation*}
    
    This simplification is not possible for other radial kernels.

    Since the partial derivative of the kernel is a linear term times the kernel, the energy term in \eqref{eq:dot_Yt} becomes
    \begin{equation*}
        \frac{1}{\sigma^2} \int_{\Omega} \left( x - y - 2 \sigma^2 \nabla f(y) \right) K(x, y) \rho_t(y) \d{y}.
    \end{equation*}
    For all particles, we aggregate this into a matrix:
    \begin{equation*}
        \frac{1}{\sigma^2 N} \left[ (\diag(K \1_N) - K) X - 2 \sigma^2 K \nabla f(X) \right].
    \end{equation*}
\end{example}

\begin{remark}
    Note that for a distance matrix $W \in \R^{N \times N}$, the matrix $\diag(W \1_N) - W$ is called the \textit{distance Laplacian} \cite{AH2013}, which we think of as a generalization of the graph Laplacian.
\end{remark}

In comparison, the non-accelerated Stein variational gradient descent scheme \cite{LW2016} has the update
\begin{equation*}
    X_{t_{k + 1}}^{i}
    = X_{t_k}^{i} + \frac{\tau_k}{N} \sum_{j = 1}^{N} (X_{t_k}^{i} - X_{t_k}^j - \nabla f(X_{t_k}^{j})) K(X_{t_k}^j, X_{t_k}^{i}),
\end{equation*}
which yields the following algorithm.

\begin{algorithm}[H]
    \caption{Stein variational gradient descent with Gaussian kernel.} \label{alg:TODO}
    \KwData{Gaussian kernel width $\sigma > 0$, number of particles $N \in \N$, number of steps $M \in \N$, step sizes $(\tau_k)_{k = 0}^{M} \subset (0, \infty)$}
    \KwResult{Matrix $X_{t_{M}}$, whose rows are particles particles that approximate the target distribution $\pi \sim \exp(-f)$.}
    \For{$k=0,\ldots, M$}{
    $X_{t_{k + 1}} \gets X_{t_k} + \frac{\tau}{N} (\diag(K_{t_k} \1_N) - K_{t_k}) X_{t_k} - \frac{\tau}{N \sigma^2} K_{t_k} \nabla f(X_{t_k})$}
\end{algorithm}

\paragraph{The generalized bilinear kernel}
The simplest positive definite kernel one can think of is $K(x, y) \coloneqq x^{\tT} y$.
As was motivated in \cref{sec:Bilinear_Analysis}, considering the kernel $K(x, y) \coloneqq x^{\tT} A y$ for some $A \in \Sym_+(\R; d)$ instead, can improve the asymptotic convergence rate of (A)SVGD.
However, this kernel is not integrally strictly positive definite (ISPD), since for any non-zero $\mu \in \mathcal{M}(\Omega)$ we have $\int_{\Omega} \int_{\Omega} K(x, y) \d{\mu}(x) \d{\mu}(y) = \| A^{\frac{1}{2}} \E[\mu] \|_2^2$, which is zero if and only if $\E[\mu] \in \ker(A^{\frac{1}{2}})$.
Since much of the SVGD theory builds on ISPD kernels \cite{DNS2023,NR2023}, we instead consider $K(x, y) \coloneqq x^{\tT} A y + a$, for $a > 0$, which is ISPD.
Since multiplying $K$ by a constant is equivalent to dividing the step size of SVGD by that constant, we fix $a = 1$.

We now show how the updates from \cref{lemma:ASVGD_fully_discrete} simplify for this kernel.

Since $\nabla_1 K(x, y) = A y $ and $\nabla_2 K(x, y) = A x$, the update \eqref{eq:dot_Yt} becomes
\begin{equation*} \label{eq:dot_Yt_bilinear}
\begin{aligned}
    \dot{Y}_t
    & = - \alpha_t Y_t + \int_{\Omega} \big(A X_t - (X_t^{\tT} A y + 1) \nabla f(y) \big) \rho_t(y) \d{y}
    + \int_{\Omega} \int_{\Omega} \rho_t(y) \rho_t(z) \langle \nabla \Phi_t(z), \nabla \Phi_t(y) \rangle_{\R^n} \\
    & \qquad \cdot \left[ (y^{\tT} A z + 1) A X_t + (X_t^{\tT} A z + 1) A y - (X_t^{\tT} A y + 1) A z \right] \d{y} \d{z} \\
    & = - \alpha_t Y_t + A X_t  - \int_{\Omega} (X_t^{\tT} A y + 1) \nabla f(y) \rho_t(y) \d{y} \\
    & \qquad + A X_t \int_{\Omega} \int_{\Omega} \langle \nabla \Phi_t(z), \nabla \Phi_t(y) \rangle_{\R^n} (y^{\tT} A z + 1) \rho_t(y) \d{y} \rho_t(z) \d{z}    
\end{aligned}
\end{equation*}
where we used the fact that the second and third summands in the square brackets cancel each other out, since one can exchange the roles of $y$ and $z$.

For the bilinear kernel, the kernel matrix at time $t$,
\begin{equation*}
    K_{t}
    \coloneqq \left( K\left(X_{t}^{(i)}, X_{t}^{(j)}\right) \right)_{i, j = 1}^{N}
    = X_t A X_{t}^{\tT} + a \1_{N \times N}
    = X_t A^{\frac{1}{2}} (X_t A^{\frac{1}{2}})^{\tT} + a \1_N \1_N^{\tT},
\end{equation*}
is symmetric positive semidefinite.

Rewrite the updates \eqref{eq:multidim} in matrix form we obtain \cref{algo:Particle_ASGD_Bilinear}

\begin{algorithm}[H]
    \caption{Accelerated Stein variational gradient descent particle algorithm with generalized bilinear kernel.} \label{algo:Particle_ASGD_Bilinear}
\KwData{Kernel parameter matrix $A \in \Sym_+(d)$, number of particles $N \in \N$, number of steps $M \in \N$, step sizes $(\tau_k)_{k = 0}^{M} \subset (0, \infty)$}
\KwResult{Matrix $X_{t_{M}}$, whose rows are particles particles that approximate the target distribution $\pi \sim \exp(-f)$.}
\For{$k=0,\ldots, M - 1$}{
        $X_{t_{k + 1}}
        \gets X_{t_k} + \tau_k Y_{t_k}$\;
        $V_{t_{k + 1}}
        \gets N K_{t_{k + 1}}
        ^{\dagger} Y_{t_k}$\;
        $Y_{t_{k + 1}}
        \gets \alpha_{t_k} Y_{t_k}
        + \sqrt{\tau_k}(1 + N^{-2}  \tr( V_{t_{k + 1}}^{\tT} K_{t_{k + 1}} V_{t_{k + 1}})) X_{t_{k + 1}} A
        - \frac{\sqrt{\tau_k}}{N} K_{t_{k + 1}} \nabla f(X_{t_{k + 1}})$\;
    }
\end{algorithm}
Here, the function $\nabla f$ is applied to every row of $X_t$ separately, and $\dagger$ denotes the pseudoinverse.

For comparison, we mention the non-accelerated SVGD algorithm for the generalized bilinear kernel.

\begin{algorithm}[H]
\caption{Stein variational gradient descent particle algorithm with generalized bilinear kernel.} \label{alg:SVGD_bilinear}
\KwData{Kernel parameter matrix $A \in \Sym_+(d)$, number of particles $N \in \N$, number of steps $M \in \N$, step sizes $(\tau_k)_{k = 0}^{M} \subset (0, \infty)$}
\KwResult{Matrix $X_{t_{M}}$, whose rows are particles particles that approximate the target distribution $\pi \sim \exp(-f)$.}
    \For{$k=0,\ldots, M - 1$}{
        $X_{t_{k + 1}}
        \gets X_{t_k} + \frac{\tau_k}{N} \left( K_{t_k} \nabla f(X_{t_k}) + N X_{t_k} A \right)$
    }
\end{algorithm}

\textbf{Gradient restart.}
Analogously to the construction in \cite[Sec.~5.2]{WL2020}, we also try out the gradient restart, that is, if $- \partial_t E(\rho_t) = - g_{\rho_t}(\partial_t \rho_t, G_{\rho_t}^{-1}[\delta E(\rho_t)]) < 0$, we set $Y_k$ to 0.

Using integration by parts and the score-avoiding trick from before, we obtain
\begin{align*}
    - g_{\rho_t}(\partial_t \rho_t, G_{\rho_t}^{-1}[\delta E(\rho_t)])
    & = \langle \partial_t \rho_t, \delta E(\rho_t) \rangle_{L^2(\R^d)} \\
    & = \left\langle \rho_t(\cdot) \int_{\R^d} K(\cdot, y) \rho_t(y) \nabla \Phi_t(y) \d{y}, \nabla \partial E(\rho_t) \right\rangle \\
    & = \int_{\R^d} \nabla \Phi_t(y) \int_{\R^d} \big[ K(x, y) \nabla f(x) - \nabla_2 K(y, x) \big] \rho_t(x) \d{x} \rho_t(y) \d{y} \\
    & = \frac{1}{N^2} \sum_{i, j = 1}^{N} \langle V_j, K(X_i, X_j) \nabla f(X_i) - (\nabla_2 K)(X_j, X_i) \rangle_{\R^d}.
\end{align*}


\subsection{Proof of \texorpdfstring{\cref{lemma:Gaussian-Stein_metric_tensor}}{a lemma}} \label{subsec:proof_Gauss_Stein_tensor} 
\begin{proof}
\begin{enumerate}
    \item     
    We fix $\theta = (\mu, \Sigma) \in \Theta$ and define the scalar product on $T_{\theta} \Theta \times T_{\theta} \Theta$ by
    \begin{equation*}
        \langle (\nu_1, S_1), (\nu_2, S_2) \rangle_{\theta}
        \coloneqq \tr(S_1 S_2) + \nu_1^{\tT} \nu_2.
    \end{equation*}
    The (trivial) tangent bundle of $\Theta$ is $T\Theta
        = \Theta \times \R^d \times \Sym(d)
        \cong \Theta \times \R^{\frac{d(d+1)}{2}}$.
    The pushforward of the inclusion is
    \begin{equation*}
        \d \phi_{\theta}
        = (\phi_*)_{\theta} \colon T_{\theta} \Theta \to T_{\rho_{\theta}} \widetilde{\P}(\R^d), \qquad
        \xi \mapsto \bigg[ x \mapsto \langle \xi, \nabla_{\theta} \rho_{\theta}(x) \rangle_{\theta} \bigg].
    \end{equation*}
    We can form the following commutative diagrams using $\phi_*$ and its pullback $\phi^*$:
    \begin{figure}[H]
        \centering
        \begin{tikzcd}[sep = huge]
            \substack{T_{\theta} \Theta \\ (\nu, S)}\ar[r, "(\phi_*)_{\theta}"] \ar[d, "\tilde{G}_{\theta}"'] \ar[dr, "\Xi_{\theta}"]
            & \substack{T_{\rho_{\theta}} \widetilde{\P}(\R^d) \\ \left\langle (\nu, S), \nabla_{\theta} p_{\theta}(\cdot) \right\rangle} \ar[d, "G_{\rho_{\theta}}"] \\
            \substack{T_{\theta}^* \Theta \\ (m_2, \frac{1}{2} M_1)} 
            & \substack{T_{\rho_{\theta}}^* \widetilde{\P}(\R^d) \\ \Xi_{\theta}(\nu, S) = [\frac{1}{2} (\cdot - \mu)^{\tT} M_1 (\cdot - \mu) + \langle m_2, \cdot \rangle + C]} \ar[l, "(\phi^*)_{\rho_{\theta}}"]
        \end{tikzcd}
        \caption{
        The involved tangent and cotangent spaces, and a typical element contained in them each displayed beneath them.
        We will show that $(\nu, S)$ and $(m_2, M_1)$ are related by $\nu = M_1 \Sigma A \mu + K(\mu, \mu) m_2$ and $S = 2 \Sym\left(\Sigma A\big( \Sigma M_1 + \frac{1}{K(\mu, \mu)}(\nu - M_1 \Sigma A \mu) m_2^{\tT}\big)\right)$.
        Here, $[\cdot]$ denotes the equivalence class in $C^{\infty}(\R^d) / \R$.}
        \label{fig:commute_pushforward}
    \end{figure}
    Note that pullback $\phi^* \colon \Gamma(T^* \widetilde{\P}(\R^d)) \to \Gamma(T^* \Theta)$ is defined on each tangent space via
    \begin{equation*}
        \phi^*(\alpha)|_{T_{\theta} \Theta}(\xi)
        \coloneqq \alpha|_{T_{\rho_{\theta}} \widetilde{\P}(\R^d)}(\phi_*(\xi)), \qquad \theta \in \Theta, \xi \in T_{\theta} \Theta, \alpha \in \Gamma(T^* \widetilde{\P}(\R^d)).
    \end{equation*}
    Evaluating at $\rho_{\theta}$, $\theta \in \Theta$ thus yields $(\phi^*)_{\rho_{\theta}} \colon T_{\rho_{\theta}}^* \widetilde{\P}(\R^d) \to T_{\theta}^* \Theta$, defined by
    \begin{equation*}
        \Phi \mapsto \bigg[ T_{\theta} \Theta \to \R, \qquad \xi \mapsto \langle \Phi, \big((\phi_*)_{\theta}(\xi)\big) \rangle_{L^2(\R^d)} \bigg], 
    \end{equation*}
    since we identified $T_{\rho_{\theta}}^* \widetilde{\P}(\R^d) \cong \C^{\infty}(\R^d) / \R$ in duality with $T_{\rho_{\theta}} \widetilde{\P}(\R^d)$ using the $L^2(\R^d)$ scalar product, so that
    \begin{equation*}
        (\phi^*)_{\rho_{\theta}}(\Phi)
        = \langle \Phi, \nabla_{\theta} \rho_{\theta} \rangle_{L^2(\R^d)}
        = \begin{pmatrix} 
            \langle \Phi, \nabla_{\mu} \rho_{\theta} \rangle_{L^2(\R^d)} \\
            \langle \Phi, \nabla_{\Sigma} \rho_{\theta} \rangle_{L^2(\R^d)}
        \end{pmatrix}
        \in \R^d \times \Sym(d),
    \end{equation*}
    where we integrate each component against $\Phi$.
    This map can be simplified using integration by parts: by \cref{prop:GaussCalcationRules} we have
    \begin{equation*}
        \int_{\R^d} \Phi(x) \nabla_{\mu} p_{\theta}(x) \d{x}
        = - \int_{\R^d} \Phi(x) \nabla_x p_{\theta}(x) \d{x}
        = \int_{\R^d} \nabla \Phi(x) p_{\theta}(x) \d{x}.
    \end{equation*}
    
    \item 
    Since the right diagram commutes in \figref{fig:commute_pushforward}, we have $\tilde{G}_{\theta} = (\phi^*)_{\rho_{\theta}} \circ G_{\rho_{\theta}} \circ (\phi_*)_{\theta}$.
    Since we only have a closed form for $G_{\rho_{\theta}}^{-1}$, we proceed as follows.
    Fix $(\nu, S) \in T_{\theta} \Theta$ and set $\Xi_{\theta}(\nu, S) \coloneqq \big(G_{\rho_{\theta}} \circ (\phi_*)_{\theta}\big)(\nu, S) \in T_{\rho_{\theta}}^* \widetilde{\P}(\R^d)$.
    Then we have
    \begin{equation} \label{eq:helper1}
        G_{\rho_{\theta}}^{-1}[\Xi_{\theta}(\nu, S)] = (\phi_*)_{\theta}(\nu, S).
    \end{equation}
    Aiming to find a general expression for $\tilde{G}_{\theta}$, we now to guess appropriate ansatz for $\Xi_{\theta}(\nu, S)$ from \eqref{eq:helper1}.
    Writing out \eqref{eq:helper1} yields
    \begin{equation*}
        - \nabla \cdot \left((\rho_{\theta}(x) \int_{\R^d} K(x, y) \rho_{\theta}(y) \nabla [\Xi_{\theta}(\nu, S)](y) \d{y}\right)
        = \langle (\nu, S), \nabla_{\theta} \rho_{\theta}(x) \rangle_{\theta} \qquad \forall x \in \R^d.
    \end{equation*}
    For easy of notation, we set $\Psi \colon \R^d \to \R^d$, $x \mapsto \int_{\R^d} K(x, y) \rho_{\theta}(y) \nabla [\Xi_{\theta}(\nu, S)](y) \d{y}$.
    The product rule for the left-hand side together with \cref{prop:GaussCalcationRules} yields all for $x \in \R^d$ that
    \begin{align*}
        - \nabla \cdot \left(\rho_{\theta}(x) \Psi(x)\right)
        & = \langle \Psi(x), \nabla \rho_{\theta}(x) \rangle_{\R^d}
        + \rho_{\theta}(x) \nabla \cdot \Psi(x) \\
        & = \rho_{\theta}(x) \left( \Psi(x)^{\tT} \Sigma^{-1}(x - \mu) - \nabla \cdot \Psi(x) \right).
    \end{align*}
    On the other hand, by \cref{prop:GaussCalcationRules}, the right-hand side is equal to
    \begin{align*}
        \langle (\nu, S), \nabla_{\theta} \rho_{\theta}(x) \rangle_{\theta}
        & = \rho_{\theta}(x) \left[ \tr\left(\frac{1}{2} \left[\Sigma^{-1} (x - \mu) (x - \mu)^{\tT} - \id\right] \Sigma^{-1} S\right) 
        + \nu^{\tT} \Sigma^{-1} (x - \mu)\right] \\
        & = \rho_{\theta}(x) \left[ \left(\frac{1}{2} (x - \mu)^{\tT} \Sigma^{-1} S + \nu^{\tT} \right) \Sigma^{-1} (x - \mu) - \frac{1}{2} \tr(\Sigma^{-1} S) \right].
    \end{align*}
    Since $\rho_{\theta}(x) > 0$ for all $x \in \R^d$, we can divide by it, and we have reduced \eqref{eq:helper1} to
    \begin{equation} \label{eq:helper2}
        \left(\frac{1}{2} (x - \mu)^{\tT} \Sigma^{-1} S + \nu^{\tT} \right) \Sigma^{-1} (x - \mu) - \frac{1}{2} \tr(\Sigma^{-1} S)
        = \Psi(x)^{\tT} \Sigma^{-1}(x - \mu) - \nabla \cdot \Psi(x).
    \end{equation}
    Since the left-hand side is quadratic in $x$, we want to choose $\Xi_{\theta}(\nu, S)$ such that $\Psi$ is linear.
    Noting that $\Psi$ only depends on the gradient $\nabla \Xi_{\theta}(\nu, S)$ and that the kernel is linear in each argument, we should choose the most simple ansatz, namely, that $\nabla \Xi_{\theta}(\nu, S)$ is a linear function, i.e., $[\nabla \Xi_{\theta}(\nu, S)](y) = M_1 (y - \mu) + m_2$, for $M_1 \in \Sym(d)$, $m_2 \in \R^d$.
    Then $\Psi$ is linear:
    \begin{equation} \label{eq:computing_Psi}
    \begin{aligned}
        \Psi(x)
        & = \int_{\R^d} \rho_{\theta}(y) (M_1 y + m_2 - M_1 \mu) y^{\tT} \d{y} A x
        + \int_{\R^d} \rho_{\theta}(y) (M_1 y + m_2 - M_1 \mu) \d{y} \\
        & = \left( M_1 \int_{\R^d} \rho_{\theta}(y) y y^{\tT} \d{y} + (m_2 - M_1 \mu) \left(\int_{\R^d} \rho_{\theta}(y) y \d{y}\right)^{\tT} \right) A x
        + \left( M_1 \mu + m_2 - M_1 \mu\right) \\
        & = \big(M_1 \Sigma + m_2 \mu^{\tT}\big) A x + m_2
    \end{aligned}
    \end{equation}
    and, consequently, 
    \begin{equation*}
        \nabla \cdot \Psi
        = \tr\big( \big(M_1 \Sigma + m_2 \mu^{\tT}\big) A \big)
        = \tr(M_1 \Sigma A) + \mu^{\tT} A m_2.
    \end{equation*}
    Hence, for this ansatz, the right-hand side of \eqref{eq:helper2} becomes
    \begin{equation*}
        x^{\tT} A \big(\Sigma M_1 + \mu m_2^{\tT} \big) \Sigma^{-1} (x - \mu)
        + m_2^{\tT} \Sigma^{-1} (x - \mu)
        - \tr(M_1 \Sigma A) - \mu^{\tT} A m_2.
    \end{equation*}
    Adding a productive zero, replacing the matrix of the quadratic term by its symmetrized version, and collecting like terms in this expression yields our final expression for the left-hand side of \eqref{eq:helper2}:
    \begin{align*}
        & \ \frac{1}{2} (x - \mu)^{\tT} \left[ A \big(\Sigma M_1 + \mu m_2^{\tT} \big) \Sigma^{-1} 
        +  \Sigma^{-1} \big( M_1 \Sigma + m_2 \mu^{\tT} \big) A
        \right] (x - \mu) \\
        & \qquad + \left[ \mu^{\tT} A \big(\Sigma M_1 + \mu m_2^{\tT} \big) + m_2^{\tT} \right] \Sigma^{-1} (x - \mu) - \tr(M_1 \Sigma A) - \mu^{\tT} A m_2.
    \end{align*}
    Now, both sides of \eqref{eq:helper2} are quadratic functions in $x$ of the form $(x - \mu)^{\tT} B_1 (x - \mu) + b_2^{\tT} (x - \mu) + b_3$ for $B_1 \in \Sym(d)$, $b_2 \in \R^d$, $b_3 \in \R$ and we can compare these coefficients.
    This yields 
    \begin{equation*}
        \begin{cases}
            B_1
            = \frac{1}{2} \Sigma^{-1} S \Sigma^{-1}
            \overset{!}{=} \frac{1}{2} A \big( \Sigma M_1 + \mu m_2^{\tT} \big) \Sigma^{-1}
            + \frac{1}{2} \Sigma^{-1} \big( M_1 \Sigma + m_2 \mu^{\tT} \big) A \\
            b_2
            = \Sigma^{-1} \nu 
            \overset{!}{=} \Sigma^{-1} (M_1 \Sigma + \mu m_2^{\tT}) A \mu + \Sigma^{-1} m_2 \\
            b_3
            = - \frac{1}{2} \tr(\Sigma^{-1} S)
            \overset{!}{=} - \tr(M_1 \Sigma A) - \mu^{\tT} A m_2.
        \end{cases}
    \end{equation*}
    In the second equation, we multiply by $\Sigma$ from the left on both sides to obtain the following simplified system:
    \begin{equation} \label{eq:reduced_system}
        \begin{cases}
            \Sigma^{-1} S \Sigma^{-1}
            = A \big( \Sigma M_1 + m_2 \mu^{\tT} \big) \Sigma^{-1}
            + \Sigma^{-1} \big( M_1 \Sigma + m_2 \mu^{\tT} \big) A \\
            \nu 
            = M_1 \Sigma  A \mu + K(\mu, \mu) m_2 \\
            \frac{1}{2} \tr(\Sigma^{-1} S)
            = \tr(M_1 \Sigma A) + \mu^{\tT} A m_2.
        \end{cases}
    \end{equation}
    The first equation in \eqref{eq:reduced_system} can be rewritten as
    \begin{equation} \label{eq:Lyapunov}
        S - \Sigma A \mu m_2^{\tT} - m_2 \mu^{\tT} A \Sigma
        = \Sigma A \Sigma M_1 + M_1 \Sigma A \Sigma.
    \end{equation}
    This can be rewritten as $P M_1 + M_1 P = Q$, where $P, Q \in \Sym(d)$ are given by
    \begin{equation*}
        P \coloneqq \Sigma A \Sigma, \qquad
        Q \coloneqq S - \Sigma A \mu m_2^{\tT} - m_2 \mu^{\tT} A \Sigma,
    \end{equation*}
    such that there exists a unique solution $M_1 \in \Sym(d)$.
    The second equation in \eqref{eq:reduced_system} can be solved for an $M_1$-dependent expression of $m_2$: we have
    \begin{equation} \label{eq:m2}
        m_2
        = \frac{1}{K(\mu, \mu)} (\nu - M_1 \Sigma A \mu)
    \end{equation}
    We can check that this already implies that the third equation from \eqref{eq:Lyapunov} is fulfilled:
    \begin{align*}
        \frac{1}{2} \tr(\Sigma^{-1} S)
        & = \frac{1}{2} \tr\big(\Sigma^{-1} (\Sigma A \Sigma M_1 + M_1 \Sigma A \Sigma + \Sigma A \mu m_2^{\tT} + m_2 \mu^{\tT} A \Sigma) \big) \\
        & = \frac{1}{2} \tr(A \Sigma M_1 + M_1 \Sigma A)
        + \frac{1}{2} \tr(A \mu m_2^{\tT} + m_2 \mu^{\tT} A)
        = \tr(A \Sigma M_1) + \mu^{\tT} A m_2,
    \end{align*}
    as desired.
    (This is in line with the fact that the equality we considered is in the tangent space of $\widetilde{\P}(\R^d)$, which consists of functions integrating to 0, so their constant terms must be equal anyway.)

    \item
    We can now finally compute $\tilde{G}_{\theta}$ and show that
    \begin{equation*}
        \tilde{G}_{\theta}^{-1}\left(m_2, \frac{1}{2} M_1\right)
        = \left( \nu, S \right).
    \end{equation*}
    Noting that $x \mapsto \frac{1}{2} (x - \mu)^{\tT} M_1 (x - \mu) + m_2^{\tT} x$ is an antiderivative of $\nabla \Xi_{\theta}(\nu, S)$, we have
    \begin{equation*}
        \tilde{G}_{\theta}(\nu, S)
        = (\phi^*)_{\rho_{\theta}}(\Xi_{\theta}(\nu, S))
        = (\phi^*)_{\rho_{\theta}}\left(\left[y \mapsto \frac{1}{2} (y - \mu)^{\tT} M_1 (y - \mu) + m_2^{\tT} y\right]\right),
    \end{equation*}
    where $M_1, m_2$ are uniquely determined by \eqref{eq:Lyapunov} and \eqref{eq:m2} and $[\cdot]$ denotes the equivalence class in $\C^{\infty}(\R^d) / \R$.

    The first entry of $\tilde{G}_{\theta}(\nu, S)$ is given by
    \begin{equation*}
        \int_{\R^d} ( M_1 (x - \mu) + m_2 ) p_{\theta}(x) \d{x}
        = M_1 \int_{\R^d} (x - \mu) p_{\theta}(x) \d{x} + m_2
        = m_2.
    \end{equation*}
    The second one is
    \begin{align*}
        \langle \Phi, \nabla_{\Sigma} \rho_{\theta} \rangle_{L^2(\R^d)}
        & = \frac{1}{2} \Sigma^{-1} \int_{\R^d} \left(\frac{1}{2} (x - \mu)^{\tT} M_1 (x - \mu) + m_2^{\tT} x \right) \\
        & \qquad \qquad \cdot \left( (x - \mu) (x - \mu)^{\tT} \Sigma^{-1} - \id\right) \rho_{\theta}(x) \d{x}.
    \end{align*}
    We can separate the integral into two summands, the first one being
    \begin{align*}
        I_1
        & \coloneqq \frac{1}{2} \int_{\R^d} \left( (x - \mu) (x - \mu)^{\tT} \Sigma^{-1} - \id\right) (x - \mu)^{\tT} M_1 (x - \mu) \rho_{\theta}(x) \d{x} \\
        & = \frac{1}{2} \int_{\R^d} (x - \mu) (x - \mu)^{\tT} M_1 (x - \mu) (x - \mu)^{\tT} \rho_{\theta}(x) \d{x} \Sigma^{-1} \\
        & \qquad - \frac{1}{2} \int_{\R^d} (x - \mu)^{\tT} M_1 (x - \mu) \rho_{\theta}(x) \d{x} \cdot \id \\
        \overset{(\star)}&{=} \frac{1}{2} \left( \Sigma \int_{\R^d} \left[2 (x_j - \mu_j) M_1 (x - \mu) + (x - \mu)^{\tT} M_1 (x - \mu) \cdot e_j\right] \rho_{\theta}(x) \right)_{j = 1}^{d} \Sigma^{-1} \\
        & \qquad - \frac{1}{2} \int_{\R^d} (x - \mu)^{\tT} M_1 (x - \mu) \rho_{\theta}(x) \d{x} \cdot \id \\
        \overset{(\star)}&{=} \left( \Sigma M_1 \int_{\R^d} (x - \mu) (x_j - \mu_j) \rho_{\theta}(x) \d{x} + \frac{1}{2} \Sigma \int_{\R^d} (x - \mu)^{\tT} M_1 (x - \mu) \rho_{\theta}(x)  \cdot e_j \right)_{j = 1}^{d} \Sigma^{-1} \\
        & \qquad - \frac{1}{2} \int_{\R^d} (x - \mu)^{\tT} M_1 (x - \mu) \rho_{\theta}(x) \d{x} \cdot \id \\
        & = \frac{1}{2} \left(2 \Sigma M_1 \Sigma  + \tr(M_1 \Sigma) \Sigma \right) \Sigma^{-1}
        - \frac{1}{2} \tr(\Sigma M_1) \cdot \id \\
        & = \Sigma M_1 + \frac{1}{2} \tr(M_1 \Sigma) \cdot \id - \frac{1}{2} \tr(\Sigma M_1) \cdot \id
        = \Sigma M_1,
    \end{align*}
    where in $(\star)$ we have used Stein's Lemma \cite[Lemma~6.20]{FVRS22},
    \begin{equation*}
        \E[(X - \mu) g(X)] = \Sigma \E[\nabla g(X)], \qquad g \colon \R^d \to \R \text{ differentiable}, \ X \sim \NN(\mu, \Sigma),
    \end{equation*}
    for the functions $g_j(x) \coloneqq (x - \mu)^{\tT} M_1 (x - \mu) (x_j - \mu_j)$, which have the gradients $\nabla g_j(x) = 2 (x_j - \mu_j) M_1 (x - \mu) + (x - \mu)^{\tT} M_1 (x_j - \mu_j)$ (where $e_j$ denotes the $j$-th unit vector), together, for the first summand and \cref{prop:GaussCalcationRules} for the second summand.
    To calculate $\int_{\R^d} (x - \mu) (x_j - \mu_j) \rho_{\theta}(x) \d{x}$, we again use Stein's lemma, this time with $g_j(x) \coloneqq e_j^{\tT}(x - \mu)$.

    The second summand of the second term of $\tilde{G}_{\theta}(\nu, S)$ is
    \begin{align*}
        I_2
        & \coloneqq \int_{\R^d} m_2^{\tT} x \big( (x - \mu) (x - \mu)^{\tT} \Sigma^{-1} - \id) \rho_{\theta}(x) \d{x} \\
        & = \int_{\R^d} (x - \mu) (x - \mu)^{\tT} \Sigma^{-1} m_2^{\tT} x \rho_{\theta}(x) \d{x}
        - m_2^{\tT}\int_{\R^d} x \rho_{\theta}(x) \d{x} \\
        \overset{(\star)}&{=} \Sigma \int_{\R^d} \left( \Sigma^{-1} (x - \mu) m_2^{\tT} + m_2^{\tT} x \Sigma^{-1} \right) \rho_{\theta}(x) \d{x} - m_2^{\tT} \mu \\
        & = 0,
    \end{align*}
    where in $(\star)$ we used Stein's lemma with $g(x) = (x - \mu)^{\tT} \Sigma^{-1} m_2^{\tT} x$, whose gradient is given by $\nabla g(x) = \Sigma^{-1} (x - \mu) m_2^{\tT} + m_2^{\tT} x \Sigma^{-1}$, for all $x \in \R^d$.

    In summary, the second entry of $\tilde{G}_{\theta}(\nu, S)$ is given by
    \begin{equation*}
        \frac{1}{2} \Sigma^{-1}(I_1 + I_2)
        = \frac{1}{2} M_1.
    \end{equation*}

    \item 
    We now derive the expression for the Riemannian metric.
    By definition of the pullback metric, we have for $\xi, \eta \in T_{\theta} \Theta$ that
    \begin{align*}
        \tilde{g}_{\theta}(\xi, \eta)
        & = g_{\rho_{\theta}}\big(\d \phi_{\theta}(\xi), \d \phi_{\theta}(\eta)\big)
        = \int_{\R^d} \big[\Xi_{\theta}(\xi)\big](x), G_{\rho_{\theta}}^{-1}\big[ \Xi_{\theta}(\eta)\big](x) \d{x} \\
        & = \int_{\R^d} \left\langle \nabla \big[\Xi_{\theta}(\xi)\big](x), p_{\theta}(x) \int_{\R^d} (x^{\tT} A y + 1) \nabla\big[ \Xi_{\theta}(\eta)\big](y) p_{\theta}(y) \d{y} \right\rangle \d{x} \\
        & = \int_{\R^d} \left\langle M_1 (x - \mu) + m_2, p_{\theta}(x) \int_{\R^d} (x^{\tT} A y + 1) \big( \widetilde{M}_1 (y - \mu) + \widetilde{m}_2 \big) p_{\theta}(y) \d{y} \right\rangle \d{x} \\
        & = \int_{\R^d} \left\langle M_1 (x - \mu) + m_2, \widetilde{M}_1 \Sigma A x + K(x, \mu) \widetilde{m}_2 \right\rangle p_{\theta}(x) \d{x} \\
        & = \tr(M_1 \widetilde{M}_1 \Sigma A \Sigma) + (\widetilde{m}_2^{\tT} M_1 + m_2^{\tT} \widetilde{M}_1) \Sigma A \mu
        + K(\mu, \mu) m_2^{\tT} \widetilde{m}_2,
    \end{align*}
    where we used integration by parts in the second line, and \eqref{eq:computing_Psi} in the fourth line.

    The associated tangent-cotangent isomorphism is indeed $\tilde{G}_{\theta}$, since by plugging in the formulas from above we obtain
    \begin{equation*}
        \left\langle \tilde{G}_{\theta}(\xi), \eta \right\rangle
        = \left\langle (m_2, \frac{1}{2} M_1), \eta \right\rangle
        = \tilde{g}_{\theta}(\xi, \eta).
    \end{equation*}
\end{enumerate}
\end{proof}

\subsection{Proof of \texorpdfstring{\cref{thm:3.1}}{a theorem}} \label{subsec:Thm3.1_Proof}

\begin{proof}
    \begin{enumerate}

    \item 
    We prove this statement analogously to \cite{LGBP2024}.
    Consider $\tilde{E} \colon \Theta \to \R$, $\theta \mapsto E(\rho_{\theta}) = \KL(\NN(\theta) \mid \NN(b, Q))$.
    Then by \cref{lemma:Gaussian-Stein_metric_tensor}, the gradient flow is
    \begin{equation} \label{eq:SVGD_in_Gaussians}
        \begin{aligned}
            \dot{\theta_t}
            & = - G_{\theta_t}^{-1}(\nabla_{\theta_t} \tilde{E}(\theta_t)) \\
            & = \begin{pmatrix}
                (\id - Q^{-1} \Sigma_t) A \mu_t
                - K(\mu_t, \mu_t) Q^{-1}(\mu_t - b) \\
                2 \Sym\big(\Sigma_t A(Q
                - \Sigma_t
                - \mu_t (\mu_t - b)^{\tT}) Q^{-1}\big)
            \end{pmatrix}. 
        \end{aligned}
    \end{equation}

    \item 
    The right-hand side of \eqref{eq:SVGD_in_Gaussians} is continuously differentiable with respect to $\theta_t$ and hence locally Lipschitz, so the Picard-Lindelöff theorem asserts the existence of a solution on $[0, \eps)$ for some $\eps > 0$.
    The global existence and the fact that Gaussians are preserved follows exactly as in \cite[p.~39--40]{LGBP2024}.

    We now prove that $\rho_t \weak \rho^*$.
    For $t > 0$ we have by \eqref{eq:Stein_gradient_flow_Gaussians}
    \begin{align*}
        \dot{\tilde{E}}(\mu_t, \Sigma_t)
        & = \frac{1}{2} \tr\left( (Q^{-1} - \Sigma_t^{-1}) \dot{\Sigma}_t\right)
        + (\mu_t - b)^{\tT} Q^{-1} \dot{\mu}_t \\
        & = \frac{1}{2} \tr\bigg( (Q^{-1} - \Sigma_t^{-1})
        \left( 2 \Sym\big(\Sigma_t A (Q - \Sigma_t - \mu_t (\mu_t - b)^{\tT}) Q^{-1}\big)\right) \\
        & \qquad + (\mu_t - b)^{\tT} Q^{-1} \left( (\id - Q^{-1} \Sigma_t) A \mu_t
        - K(\mu_t, \mu_t) Q^{-1}(\mu_t - b) \right) \\
        & = \frac{1}{2} \tr\left( 2 Q^{-1} \Sym(\Sigma_t A)
            - 2 Q^{-2} \Sigma_t A \Sigma_t
            - 2 \Sym\big(Q^{-2} \Sigma_t A \mu_t (\mu_t - b)^{\tT}\big)\right) \\
        & \qquad + \frac{1}{2}\tr\left(
            - 2 A 
            + 2 \Sym\big(A \Sigma_t Q^{-1}\big)
            + 2 \Sym\big(A \mu_t (\mu_t - b)^{\tT} Q^{-1}\big) \right) \\
        & \qquad + 
        (\mu_t - b)^{\tT} Q^{-1} (\id - Q^{-1} \Sigma_t) A \mu_t
        - (\mu_t - b)^{\tT} Q^{-2} (\mu_t - b) \cdot K(\mu_t, \mu_t) \\
        & = \tr\left( 2 Q^{-1} \Sym(\Sigma_t A)
            - Q^{-2} \Sigma_t A \Sigma_t\right)
            - 2 (\mu_t - b)^{\tT} Q^{-2} \Sigma_t A \mu_t - \tr(A)  \\
        & \qquad + 2
            (\mu_t - b)^{\tT} Q^{-1} A \mu_t
            - (\mu_t - b)^{\tT} Q^{-2} (\mu_t - b) \mu_t^{\tT} A \mu_{t}
            - (\mu_t - b)^{\tT} Q^{-2} (\mu_t - b)\\
        & = - \tr( (B_t + C_t)^{\tT} (B_t + C_t))
        - (\mu_t - b)^{\tT} Q^{-2} (\mu_t - b)
        \le 0,
    \end{align*}
    (in fact, the inequality is strict whenever $\mu_t \ne b$) where
    \begin{equation*}
        B_t 
        \coloneqq (Q^{-1} - \Sigma_t^{-1}) \Sigma_t A^{\frac{1}{2}}
        \qquad \text{and} \qquad
        C_t
        \coloneqq Q^{-1} (\mu_t - b) \mu_t^{\tT} A^{\frac{1}{2}}.
    \end{equation*}
    As long as $\mu_t \ne b$, the functional $\tilde{E}$ thus is strictly decreasing.
    As in \cite[Proof~of~Thm.~3.1]{LGBP2024}, since $t \mapsto \dot{\tilde{E}}(\mu_{t}, \Sigma_{t})$ is uniformly continuous, Barb\u{a}lat's Lemma \cite[Thm.~1]{FW2016} shows that $\dot{\tilde{E}}(\mu_t, \Sigma_t) \to 0$ and thus $\mu_t \to b$ and $\Sigma_t \to Q$ for $t \to \infty$, and thus the claimed weak convergence follows.

    \textbf{Convergence rates.}
    Set $\eta_t \coloneqq Q^{-1}(\mu_t - b)$ and $S_t \coloneqq (Q^{1} - \Sigma_t^{1})\Sigma_t$.
    Analogously to \cite[p.~41]{LGBP2024}, we then have
    \begin{align*}
        - \dot{\tilde{E}}(\mu_t, \Sigma_t)
        & = \tr\left(A (S_t + \eta_t \mu_t^{\tT})^{\tT} (S_t + \eta_t \mu_t^{\tT})\right) + \eta_t^{\tT} \eta_t \\
        & = \begin{pmatrix}
            \vec(B_t) \\ \eta_t
        \end{pmatrix}^{\tT}
        \begin{pmatrix}
            A \otimes \id_{d \times d}         & (A \mu_t) \otimes \id_{d \times d} \\
            (A \mu_t) \otimes \id_{d \times d} & K(\mu_t, \mu_t) \id_{d \times d} 
        \end{pmatrix}   
        \begin{pmatrix}
            \vec(B_t) \\ \eta_t
        \end{pmatrix} \\
        & = \begin{pmatrix}
            \vec(B_t) \\ \eta_t
        \end{pmatrix}^{\tT}
        \begin{pmatrix}
            A \otimes \id_{d \times d}     & (A b) \otimes \id_{d \times d} \\
            (A b) \otimes \id_{d \times d} & K(b, b) \id_{d \times d} 
        \end{pmatrix}   
        \begin{pmatrix}
            \vec(B_t) \\ \eta_t
        \end{pmatrix} 
        + o(\| S_t \| + \| \eta_t \|)
    \end{align*}
    for $t \to \infty$ (since $\mu_t \to b$ and $\Sigma_t \to Q$).
    In the second line we used $\tr(B^{\tT} C) = \vec(B)^{\tT} \vec(C)$, $\tr(A X^{\tT} X) \vec(X)^{\tT} (A \otimes \id) \vec(X)$, $\vec(S + \eta \mu^{\tT}= \vec(S) + \mu \otimes \eta$, and the mixed product property of the Kronecker product.  

    On the other hand (see \cite[p.~41]{LGBP2024}, we have
    \begin{equation*}
        \widetilde{E}(\mu_t, \Sigma_t)
        = \frac{1}{4} \begin{pmatrix}
            \vec(B_t) \\ \eta_t
        \end{pmatrix}^{\tT}
        \begin{pmatrix}
            I_{d^2 \times d^2}  & 0 \\
            0                   & 2 Q
        \end{pmatrix}   
        \begin{pmatrix}
            \vec(B_t) \\ \eta_t
        \end{pmatrix} + o(S_t), \qquad t \to \infty.
    \end{equation*}
    Aiming to use Grönwall's inequality, we thus want to prove that for all $\eps > 0$, there exists a $T > 0$ such that $- \dot{\tilde{E}}(\mu_t, \Sigma_t) \ge 4 (\gamma - \eps) \tilde{E}(\mu_t, \Sigma)$ for all $t > T$, where $\gamma > 0$ is given in the theorem.
    Hence, it suffices to show
    \begin{equation*}
        \begin{pmatrix}
            A \otimes \id_{d \times d}     & (A b) \otimes \id_{d \times d} \\
            (A b) \otimes \id_{d \times d} & K(b, b) \id_{d \times d} 
        \end{pmatrix}
        \succeq \gamma \begin{pmatrix}
            I_{d^2 \times d^2}  & 0 \\
            0                   & 2 Q
        \end{pmatrix},
    \end{equation*}
    which by conjugation with $P \coloneqq \diag\big( \id_{d \times d}, (2 Q)^{-\frac{1}{2}}\big)$ is equivalent to
    \begin{equation*}
        B \coloneqq \begin{pmatrix}
            A \otimes \id_{d \times d} & \frac{1}{\sqrt{2}} (A b) \otimes Q^{-\frac{1}{2}} \\
            \frac{1}{\sqrt{2}} (b^{\tT} A) \otimes Q^{-\frac{1}{2}} & \frac{1}{2} K(b, b) Q^{-1}
        \end{pmatrix}
        \succeq \gamma \id_{(d^2 + d) \times (d^2 + d)},
    \end{equation*}
    so $\gamma$ is the smallest eigenvalue of the matrix on the left, as claimed.
    By Grönwall's lemma, we have $\tilde{E}(\mu_t, \Sigma_t) \in O(\exp\left(- 4 (\gamma - \eps) t\right)$ as $t \to \infty$ for all $\eps > 0$.

    For $d = 1$, the eigenvalues of $B$ are
    \begin{equation*}
        \lambda_{\pm}(B)
        = \frac{A}{2} + \frac{1 + A b^2 \pm \sqrt{(1 + A b^2 + 2 A Q)^2 - 8 A Q}}{4 Q}.
    \end{equation*}
    We now provide a lower bound on $\gamma$, just like in \cite[pp.~41--42]{LGBP2024}.
    First, let $b = 0$.
    Then, the matrix $B$ is diagonal and its smallest eigenvalue is given by $\gamma = \min\left\{ \lambda_{\min}(A), \frac{1}{\lambda_{\max}(Q)}\right\}$.
    For $b \ne 0$, consider for $u > 0$ the matrix
    \begin{equation*}
        B_u
        \coloneqq \begin{pmatrix}
            \frac{1}{1 + u} A \otimes \id_{d \times d} &  \frac{1}{\sqrt{2}} (A b) \otimes Q^{-\frac{1}{2}} \\
            \frac{1}{\sqrt{2}} (b^{\tT} A) \otimes Q^{-\frac{1}{2}} & \frac{1 + u}{2}(b, b) Q^{-1}.
        \end{pmatrix}
    \end{equation*}
    The upper left block is positive definite and its Schur complement is exactly zero, so that $B_u$ positive semidefinite.
    Rearranging, we thus obtain that
    \begin{equation*}
        B \succeq \diag\left( \frac{u}{1 + u} A \otimes \id_{d \times d}, \frac{1}{2}(1 - u b^{\tT} A b) Q^{-1}\right)
        \eqqcolon \Omega_u.
    \end{equation*}
    The smallest eigenvalue of $\Omega_u$ is
    \begin{equation*}
        \lambda_{\min}(\Omega_u)
        = \min\left\{ \frac{u}{1 + u} \lambda_{\min}(A), \frac{1 - u b^{\tT} A b}{2 \lambda_{\max}(Q)}\right\}.
    \end{equation*}
    Note that the two functions of $u$ over which the minimum is taken are strictly increasing and strictly decreasing, respectively, hence
    \begin{align*}
        u^*
        & \coloneqq \argmax_{u > 0} \lambda_{\min}(\Omega_u) \\
        & = \frac{\sqrt{(2 \lambda_{\min}(A) \lambda_{\max}(Q) + b^{\tT} A b - 1)^2 + 4 b^{\tT} A b} - (2 \lambda_{\min}(A) \lambda_{\max}(Q) + b^{\tT} A b - 1)}{2 b^{\tT} A b}.
    \end{align*}
    is equal to their intersection point.
    For this choice of $u = u^*$ we have
    \begin{align*}
        \gamma
        & \ge \lambda_{\min}(\Omega_{u^*})
        = \frac{u^*}{1 + u^*} \lambda_{\min}(A) \\
        & = \frac{2}{K(b, b) + 2 \lambda_{\min}(A) \lambda_{\max}(Q) + \sqrt{\big(K(b, b) + 2 \lambda_{\min}(A) \lambda_{\max}(Q)\big)^2 - 8 \lambda_{\min}(A) \lambda_{\max}(Q)}} \\
        & > \frac{1}{K(b, b) + 2 \lambda_{\min}(A) \lambda_{\max}(Q)}.
    \end{align*}

    \item
    The validity of this statement follows like the proof of \cite[Thm.~3.2]{LGBP2024}: since $\Sigma_0$ and $Q$ are symmetric positive definite matrices that commute, they are simultaneously diagonalizable.
    Hence, there exists an orthogonal matrix $V \in \R^{d \times d}$ and two diagonal matrices $D_Q = (q_1, \ldots, q_d)$ and $D_{\Sigma_0} = (s_1, \ldots, s_d)$ such that $Q = V D_Q V^{\tT}$ and $\Sigma_0 = V D_{\Sigma_0} V^{\tT}$.

    We posit the ansatz $\Sigma_t = V \diag(\lambda_k(t)) V^{\tT}$.
    Then we have $\dot{\lambda}_k(t) = 2 \lambda_k(t) (a_k) - 2 \lambda_k(t)^2 \frac{1}{q_k})$ for all $k \in \{ 1, \ldots, d \}$, where $A = \diag(a_1, \ldots, a_d)$.
    This quadratic ODE is a logistic equation \cite[Sec.~1.3]{HS1998} with rate $2 a_k$ and carrying capacity $\tilde{\lambda}_k$, which appears in mathematical ecology.
    The unique solution is $\lambda_k(t)^{-1} = \frac{1}{q_k} + e^{- 2 a_k t} \left( \frac{1}{s_k} - \frac{1}{q_k} \right)$.
    Hence, $\Sigma_t^{-1} = V \diag(\lambda_k(t)^{-1}) V^{\tT} =  Q^{-1} + e^{- 2 t A}(\Sigma_0^{-1} - Q^{-1})$

    \item 
    The Hamiltonian is
    \begin{gather*}
        H \colon \big(\R^d \times \Sym_+(d)\big) \times \big(\R^d \times \Sym(d)\big) \to \R, \\
        \big((\mu, \Sigma), (\nu, S)\big)
        \mapsto \frac{1}{2} \left\langle (\nu, S), G_{(\mu, \Sigma)}^{-1}(\nu, S) \right\rangle_{\R^d \times \Sym(d)} + \KL\big(\NN(\mu, \Sigma) \mid \NN(b, Q)\big).
    \end{gather*}
    Hence, 
    \begin{equation} \label{eq:Hamiltonian_ASVGD_Gaussians}
    \begin{aligned}
        H((\mu, \Sigma), (\nu, S))
        & = \frac{1}{2} \nu^{\tT} (2 S \Sigma A \mu + K(\mu, \mu) \nu)
        + \frac{1}{2} \tr(S \cdot 2 \Sym(\Sigma A (2 \Sigma S + \mu \nu^{\tT}))) \\
        & \qquad + \frac{1}{2} \left( \tr(Q^{-1} \Sigma) - d + (\mu - b)^{\tT} Q^{-1} (\mu - b) + \ln\left(\frac{\det(Q)}{\det(\Sigma)}\right)\right).
    \end{aligned}
    \end{equation}
    Using \cref{prop:functionalderivative_KL}, we obtain
    \begin{equation*}
        \partial_{(\mu, \Sigma)} H((\mu, \Sigma), (\nu, S))
        =  \begin{pmatrix}
            2 A \Sigma S \nu + A \mu \| \nu \|_2^2 + Q^{-1} (\mu - b) \\
            2 S \nu \mu^{\tT} A + 4 \Sym(S^2 \Sigma A) + \frac{1}{2} (Q^{-1} - \Sigma^{-1})
        \end{pmatrix},
    \end{equation*}
    and
    \begin{equation*}
        \partial_{(\nu, S)} H((\mu, \Sigma), (\nu, S))
        = \begin{pmatrix}
            2 S \Sigma A \mu + K(\mu, \mu) \nu \\
            \nu \mu^{\tT} A \Sigma
            + \Sym\big(\Sigma A (2 \Sigma S + \mu \nu^{\tT})\big)
            + 2 \Sym(\Sigma A \Sigma S)
        \end{pmatrix}.
    \end{equation*}
    Next, we show that $\Sigma_t$ remains positive definite for all $t > 0$.
    First, we show that the Hamiltonian \eqref{eq:Hamiltonian_ASVGD_Gaussians} is decreasing in time along the damped Hamiltonian flow \eqref{eq:ASVGD_Gaussians}.

    For the sake of brevity, we write $q \coloneqq (\mu, \Sigma)$ and $p \coloneqq (\nu, S)$ and $H(q, p) \coloneqq K(q, p) + P(q)$.
    Then, for any $t > 0$ we have
    \begin{align*}
        \frac{\d}{\d t} H(q_t, p_t)
        & = \partial_{q_t} H(q_t, p_t) \dot{q}_t
        + \partial_{p_t} H(q_t, p_t) \dot{p}_t \\
        & = \partial_{q_t} H(q_t, p_t) \partial_{p_t} H(q_t, p_t)
        + \partial_{p_t} H(q_t, p_t) (- \alpha_t p_t - \partial_{q_t} H(q_t, p_t)) \\
        & = - \alpha_t p_t \partial_{p_t} K(q_t, p_t).
    \end{align*}
    Since $K$ is homogeneous of degree two in $p$, Euler's theorem implies that $p \partial_p K(q, p) = 2 K(q, p)$, so that the last line becomes $- 2 \alpha_t K(q, p)$.
    Since $\alpha_t \ge 0$ and the kinetic part $K$ is nonnegative, this proves dissipation.
    Exactly as in \cite[Proof~of~Thm.~3.5]{LGBP2024}, it now follows that the smallest eigenvalue of $\Sigma_t$ is bounded below by a positive constant.
    
    The fact that the solution of the Hamiltonian dynamics in the Gaussian submanifold coincides with the solution of the Hamiltonian dynamics on the full density manifold follows again like in \cite[p.~44]{LGBP2024}.

    \end{enumerate}
\end{proof}

\subsection{Proof of \texorpdfstring{\cref{lemma:optimalA_1D}}{lemma 5.1}} \label{subsec:ProofLemma5.1}
\begin{proof}
    Consider the gradient flow \eqref{eq:SVGD_in_Gaussians}.
    Because - to the best of our knowledge - even in one dimension this coupled system of polynomial ODEs does not admit a closed form, we now linearize this system at equilibrium (compare \cite[p.~605,~Subsec.~10.3]{OS2018}).     
    That is, we consider $\mu_t = b + \eps \Delta \mu$ and $\Sigma_t = Q + \eps \Delta \Sigma$ for small $\eps \in \R$, motivated by the fact that $\mu_t \to b$ and $\Sigma_t \to Q$ for $t \to \infty$, and ignore all higher order terms.
    We get
    \begin{align*}
        \eps \begin{pmatrix}
            \dot{\Delta \mu} \\
            \dot{\Delta \Sigma}
        \end{pmatrix}
        = \begin{pmatrix}
            - \eps Q^{-1} \Delta \Sigma A b
            - \eps b^{\tT} A b Q^{-1} \Delta \mu
            - \eps a Q^{-1} \Delta \mu \\
            - \eps Q A \Delta \Sigma Q^{-1} 
            - \eps Q A b (\Delta \mu)^{\tT} Q^{-1}
            - \eps Q^{-1} \Delta \Sigma A Q
            - \eps Q^{-1} \Delta \mu b^{\tT} A Q
        \end{pmatrix}
        + O(\eps^2)
    \end{align*}
    Thus, ignoring higher order terms, omitting the $\Delta$ for clarity, and dividing by $\eps \ne 0$ yields the linearized system
    \begin{equation*}
        \begin{pmatrix}
            \dot{\mu} \\
            \dot{\Sigma}
        \end{pmatrix}
        = \begin{pmatrix}
            - Q^{-1} \left( \Sigma A b  
            + K(b, b) \mu\right) \\
            - 2 \Sym \big(Q A (\Sigma + b \mu^{\tT}) Q^{-1}\big)
        \end{pmatrix}.
    \end{equation*}
    By the properties of the Kronecker product from \cref{prop:Kronecker_Vectorization}, we can rewrite this as the linear system
    \begin{equation} \label{eq:linearizedSystem}
        \begin{pmatrix}
            \dot{\mu} \\
            \vec\big(\dot{\Sigma}\big)
        \end{pmatrix}
        = - B_A
        \begin{pmatrix}
            \mu \\ \vec(\Sigma)
        \end{pmatrix}
    \end{equation}
    with the system matrix
    \begin{equation*}
        B_A
        \coloneqq \begin{pmatrix}
            K(b, b) Q^{-1}      &  (b^{\tT} A) \otimes Q^{-1} \\
            (Q A b) \oplus Q^{-1} & Q^{-1} \oplus(Q A)
        \end{pmatrix} \in \R^{(d^2 + d) \times (d^2 + d)}.
    \end{equation*}
    which becomes block-diagonal for $b = 0$, and we set $A \oplus B \coloneqq A \otimes B + B \otimes A$.

    \begin{enumerate}
        \item 
        In \textit{one dimension}, i.e. $d = 1$, the linearized system \eqref{eq:linearizedSystem} simplifies to
        \begin{equation*}
            \begin{pmatrix}
                \dot{\mu} \\
                \dot{\Sigma}
            \end{pmatrix}
            = - \begin{pmatrix}
                K(b, b) Q^{-1}    & Q^{-1} A b \\
                2 A b               & 2 A
            \end{pmatrix}
            \begin{pmatrix}
                \mu \\
                \Sigma
            \end{pmatrix}.
        \end{equation*}
        The two eigenvalues of the (non-normal, but diagonalizable) matrix
        \begin{equation*}
            B_A 
            = \begin{pmatrix}
                Q^{-1} (A b^2 + 1)  & Q^{-1} A b \\
                2 A b               & 2 A
            \end{pmatrix}
        \end{equation*}
        are
        \begin{equation} \label{eq:EVals}
            \lambda_{\pm} \colon (0, \infty) \to \R, \qquad 
            A \mapsto \frac{1}{2 Q} \left[ (2 A Q + A b^2 + 1) \pm \sqrt{(2 A Q + A b^2 + 1)^2 - 8 A Q}\right].
        \end{equation}
        The eigenvalues are real, because the expression in the square root is nonnegative, since this is equivalent to saying that $(\sqrt{2 A Q} - 1)^2 \ge - A b^2$.
        Clearly, we have $\lambda_{+} \ge \lambda_{-} > 0$.

        We now distinguish two cases.
        For $b \ne 0$, the eigenvalue functions are differentiable in $A$ on $(0, \infty)$ with $\lambda_{\pm}'(A) > 0$ for all $A > 0$, and $\lim_{A \searrow 0} \lambda_{-}(A) = 0$ and $\lim_{A \searrow 0} \lambda_{+}(A) = \frac{1}{Q}$.
        Hence for $b \ne 0$, the set $\argmax_{A > 0} \lambda_-(A)$ is empty.
    
        As detailed in \cref{remark:convergence_rate_kappa}, we want to minimize 
        \begin{align*}
            \frac{\lambda_+(B_A)}{\lambda_-(B_A)}
            & = \frac{(2 Q + b^2) A + 1 + \sqrt{(2 A Q + A b^2 + 1)^2 - 8 A Q}}{(2 Q + b^2) A + 1 -\sqrt{(2 A Q + A b^2 + 1)^2 - 8 A Q}}.
        \end{align*}
        
        We have
        \begin{align*}
            \frac{\d}{\d A} \frac{\lambda_+(B_A)}{\lambda_-(B_A)}
            = \frac{8 Q (A (b^2 + 2 Q) - 1) \big((A (b^2 + 2 Q) + 1)^2 - 8 A Q\big)^{-\frac{1}{2}}}{(A b^2 - \sqrt{(A (b^2 + 2 Q) + 1)^2 - 8 A Q} + 2 A Q + 1)^2}.
        \end{align*}
        Setting this quantity to zero and solving for $A$ yields
        \begin{equation*}
            A = \frac{1}{b^2 + 2 Q}.
        \end{equation*}
        For $b = 0$, the eigenvalues \eqref{eq:EVals} simplify to 
        \begin{equation*}
            \lambda_{\pm}(A)
            = \mp \left| A - \frac{1}{2 Q} \right| + A + \frac{1}{2 Q}
            = \begin{cases}
                \frac{1}{Q}, & \text{if } \mp \left(A - \frac{1}{2 Q}\right) \ge 0, \\
                2 A, & \text{else.}
            \end{cases}.
        \end{equation*}
        Hence,
        \begin{equation*}
            \argmax_{A > 0} \min\big(\lambda_{-}(A), \lambda_+(A)\big)
            = \argmax_{A > 0} \lambda_{-}(A)
            = \left[\frac{1}{2 Q}, \infty \right),
        \end{equation*}
        and for $A^* = \frac{1}{2 Q}$ we get
        \begin{equation*}
            \lambda_{-}(A^*)
            = \frac{1}{Q},
        \end{equation*}
        and thus the continuous time rate $\| x(t) - x^* \|_2 \le C_1 e^{- \frac{1}{Q} t}$.
        Furthermore, 
        \begin{equation*}
            \frac{\lambda_+(A)}{\lambda_-(A)}
            = \begin{cases}
                \frac{1}{2 Q A},    & \text{if } A < \frac{1}{2 Q}, \\
                2 Q A,              & \text{else.}
            \end{cases}
        \end{equation*}
        has the unique minimizer $A = \frac{1}{2 Q}$ with minimal value $\kappa_2(B_A) = 1$.
    
        For the choice $A^* = \frac{1}{2 Q + v^2}$, the optimal step size $h^* = \frac{2}{\lambda_{\min}(B_{A^*}) + \lambda_{\max}(B_{A^*})} = \frac{2}{\frac{1}{Q} + \frac{1}{Q}} = Q$.

        \item 
        If $A$ and $Q$ are simultaneously diagonalizable, i.e. $A = V D_A V^{-1}$ and $Q = V D_Q V^{-1}$ with $D_A = \diag(a_i)$ and $D_Q = \diag(q_i)$, and $b = 0$, then
        \begin{equation*}
            B_A
            = \begin{pmatrix}
                Q^{-1} & 0 \\
                0 & Q^{-1} \oplus (Q A)
            \end{pmatrix}
        \end{equation*}
        and with $U \coloneqq \diag(V, V \otimes V)$, the diagonal matrix
        \begin{equation*}
            \tilde{B}_A
            \coloneqq U^{-1} B_A U
            = \begin{pmatrix}
                D_Q^{-1} & 0 \\
                0 & D_Q^{-1} \oplus (D_Q D_A)
            \end{pmatrix},
        \end{equation*}
        has the positive eigenvalues $\frac{1}{q_i}$ and $\frac{q_i a_i}{q_j} + \frac{q_j a_j}{q_i}$ for $i, j \in \{ 1, \ldots, d \}$.
        We have the $A$-independent lower bound
        \begin{equation*}
            \frac{\lambda_{\max}(B_A)}{\lambda_{\min}(B_A)}
            \ge \frac{\frac{1}{\lambda_{\min}(Q)}}{\frac{1}{\lambda_{\max}(Q)}}
            = \frac{\lambda_{\max}(Q)}{\lambda_{\min}(Q)}.
        \end{equation*}
        If we choose $A = \frac{1}{2} Q^{-1}$, then $a_i = \frac{1}{2 q_i}$, so the eigenvalues become $\frac{1}{q_i}$ and $\frac{1}{2 q_j} + \frac{1}{2 q_i}$, and this lower bound is achieved, thus achieving the lowest condition number.
    \end{enumerate} 
\end{proof}

\begin{remark}[Justifying the linearization at equilibrium]
    As mentioned in \cite[p.~591]{N2018}, the asymptotic stability of the linearized system is preserved, and solutions of the non-linear and the linear system close to the equilibrium point have the same qualitative stability features.
\end{remark}


\subsection{Proof of \texorpdfstring{\cref{thm:optimal_damping}}{a lemma}} \label{subsec:proof_optimal_damping}

\begin{proof}
        Linearizing \eqref{eq:ASVGD_Gaussians} at equilibrium using $\mu_t \approx b + \Delta t \tilde{\mu}_t$, $\Sigma_t \approx Q + \Delta t \tilde{\Sigma}_t$, $\nu_t \approx \Delta t \tilde{\nu}_t$, and $S_t \approx \Delta t \tilde{S}_t$, yields
        \begin{equation*}
            \begin{cases}
                \Delta t \dot{\tilde{\mu}}_t
                = 2 \Delta t \tilde{S}_t (Q + \Delta t \tilde{\Sigma}_t) A (b + \Delta t \tilde{\mu}_t)
                + \big((b + \Delta t \tilde{\mu_t})^{\tT} A (b + \Delta t \tilde{\mu_t}) + 1\big) \Delta t \tilde{\nu}_t, \\
                \Delta t \dot{\tilde{\Sigma}}_t
                = \Sym\big((Q + \Delta t \tilde{\Sigma}_t) A \big(2 (Q + \Delta t \tilde{\Sigma}_t) \Delta t \tilde{S}_t
                + (b+ \Delta t \tilde{\mu}_t) \Delta t \tilde{\nu}_t^{\tT}\big)\big) \\
                \qquad \qquad + \Delta \tilde{\nu}_t (b + \Delta t \tilde{\mu}_t)^{\tT} A (Q + \Delta t \tilde{\Sigma}_t) + 2 \Sym((Q + \Delta t \tilde{\Sigma}_t) A (Q + \Delta t \tilde{\Sigma}_t) \Delta t \tilde{S}_t), \\
                \Delta t \dot{\tilde{\nu}}_t
                = - \alpha_t \Delta t \tilde{\nu}_t
                - 2 A (Q + \Delta t \tilde{\Sigma}_t) \Delta t \tilde{S}_t \Delta t \tilde{\nu}_t
                - A (b + \Delta t \tilde{\mu}_t) \| \Delta t \tilde{\nu}_t \|_2^2
                - Q^{-1} \Delta t \tilde{\mu}_t, \\
                \Delta t \dot{\tilde{S}}_t
                = - \alpha_t \Delta t \tilde{S}_t
                - 2 \Delta t \tilde{S}_t \Delta t \tilde{\nu}_t (b + \Delta t \tilde{\mu}_t)^{\tT} A
                - 4 \Sym\big( (\Delta t \tilde{S}_t)^2 (Q + \Delta t \tilde{\Sigma}_t) A \big) \\
                \qquad \qquad - \frac{1}{2}\big(Q^{-1} - (Q + \Delta t \tilde{\Sigma}_t)^{-1}\big).
            \end{cases}
        \end{equation*}
        Dividing by $\Delta t$ and removing the tildes for better readability yields
        \begin{equation*}
            \begin{cases}
                \dot{\mu}_t
                = 2 S_t (Q + \Delta t \Sigma_t) A (b + \Delta t \mu_t)
                + \big((b + \Delta t \mu_t)^{\tT} A (b + \Delta t \mu_t) + 1\big) \nu_t, \\
                \dot{\Sigma}_t
                = \Sym\bigg( (Q + \Delta t \Sigma_t) A \big( 2(Q + \Delta t \Sigma_t) S_t + (b + \Delta t \mu_t) \nu_t^{\tT}\big)\bigg) \\
                \qquad \qquad + \nu_t (b + \Delta t \mu_t)^{\tT} A (Q + \Delta t \Sigma_t) + 2 \Sym\big((Q + \Delta t \Sigma_t) A (Q + \Delta t \Sigma_t) S_t\big), \\
                \dot{\nu}_t
                = - \alpha_t \nu_t
                - 2 A (Q + \Delta t \Sigma_t) S_t \Delta t \nu_t
                - A (b + \Delta t \mu_t) \Delta t \| \nu_t \|_2^2
                - Q^{-1} \mu_t, \\
                \dot{S}_t
                = - \alpha_t S_t
                - 2 \tilde{S}_t \Delta t \tilde{\nu}_t (b + \Delta t \tilde{\mu}_t)^{\tT} A
                - 4 \Sym\big( \Delta t (\tilde{S}_t)^2 (Q + \Delta t \tilde{\Sigma}_t) A \big) \\
                \qquad \qquad - \frac{1}{2 \Delta t}\big(Q^{-1} - (Q + \Delta t \tilde{\Sigma}_t)^{-1}\big).
            \end{cases}
        \end{equation*}
        Sending $\Delta t \to 0$ yields
        \begin{equation} \label{eq:linearized_system}
            \begin{cases}
                \dot{\mu}_t
                = 2 S_t Q A b + K(b, b) \nu_t, \\
                \dot{\Sigma}_t
                = \nu_t b^{\tT} A Q
                + \Sym\big( Q A ( 2 Q S_t + b \nu_t^{\tT})\big)
                + 2 \Sym(Q A Q S_t), \\
                \dot{\nu}_t
                = - \alpha_t \nu_t 
                - Q^{-1} \mu_t \\
                \dot{S}_t
                = - \alpha_t S_t - \frac{1}{2} Q^{-1} \Sigma_t Q^{-1}.
            \end{cases}
        \end{equation}
        We rewrite the linearized system \eqref{eq:linearized_system} as
        \begin{equation*}
            \partial_t \begin{pmatrix}
                \mu_t \\
                \vec(\Sigma_t) \\
                \nu_t \\
                \vec(S_t)
            \end{pmatrix}
            = - B_{A, Q, \alpha}
            \begin{pmatrix}
                \mu_t \\
                \vec(\Sigma_t) \\
                \nu_t \\
                \vec(S_t)
            \end{pmatrix}
        \end{equation*}
        with
        \begin{equation} \label{eq:AVGD_Gaussians_linearized}
            B_A
            \coloneqq - \begin{pmatrix}
                0           & 0                                     & - K(b, b) \id                                    & - 2 (b^{\tT} A Q) \otimes \id \\
                0           & 0                                     & - \frac{3}{2} (Q A b) \otimes \id - \frac{1}{2} \id \otimes Q A b     & - 2 Q A Q \oplus \id \\
                Q^{-1}      & 0                                     & \alpha_t \id                                                          & 0 \\
                0           & \frac{1}{2} Q^{-1} \otimes Q^{-1}     & 0                                                                     & \alpha_t \id \\
            \end{pmatrix}
        \end{equation}
        Since $\mu_0 = b = 0$, we strike the first and third rows and columns, to obtain the matrix
        \begin{equation*}
            B_{A, Q, \alpha}
            \coloneqq \begin{pmatrix}
                0 & - 2 Q A Q \oplus \id \\
                \frac{1}{2} Q^{-1} \otimes Q^{-1}                                                                          & \alpha \id \\
            \end{pmatrix}
            \in \R^{2 d^2 \times 2 d^2}.
        \end{equation*}
        Since $A = V D_A V^{\tT}$ and $Q = V D_Q V^{\tT}$ for an orthogonal matrix $V \in \R^{d \times d}$ and invertible diagonal matrices $D_A, D_Q \in \R^{d \times d}$ with entries $(a_i)_{i = 1}^{d}$ and $(q_i)_{i = 1}^{d}$ respectively, we have by the mixed product property that
        \begin{equation*}
            \tilde{V}^{-1} B_{A, Q, \alpha} \tilde{V} 
            = \begin{pmatrix}
                0                                       & - 2 (D_Q^2 D_A) \oplus \id_{d \times d} \\
                \frac{1}{2} D_Q^{-1} \otimes D_Q^{-1}   & \alpha \id_{d^2 \times d^2},
            \end{pmatrix}
            \eqqcolon \tilde{B}_{A, Q, \alpha},
        \end{equation*}
        where $\tilde{V} \coloneqq \diag(V \otimes V, V \otimes V)$, so we can equivalently calculate the eigenvalues of the matrix $\tilde{B}_{A, Q, \alpha}$.
        Since for $\lambda \in \mathbb{C}$, the matrix $\tilde{B}_{Q, A, \alpha} - \lambda \id_{d^2 \times d^2}$ is a $2 \times 2$ block matrix with blocks of equal size and the two lower blocks commute, we have by the mixed product property of the Kronecker product that
        \begin{align*}
            \det(\tilde{B}_{A, Q, \alpha} - \lambda \id_{2 d^2 \times 2 d^2})
            & = \det\left( \lambda(\lambda - \alpha) \id_{d^2 \times d^2} + (D_Q D_A) \oplus D_Q^{-1}\right).
        \end{align*}
        Since, $(D_Q D_A) \oplus D_Q^{-1}$ is a diagonal matrix with entries $\mu_{i, j} \coloneqq \frac{q_i}{q_j} a_i + \frac{q_j}{q_i} a_j$ for $i, j \in \{ 1, \ldots, d \}$, it follows that
        \begin{align*}
            \det(\tilde{B}_{A, Q, \alpha} - \lambda \id_{2 d^2 \times 2 d^2})
            & = \prod_{i, j = 1}^{d} \left(\lambda (\lambda - \alpha) + \mu_{i, j} \right).
        \end{align*}
        Hence, the eigenvalues of the matrix $B_{A, Q, \alpha}$ are
        \begin{equation*}
            \lambda_{i, j}^{\pm}(A, Q, \alpha)
            \coloneqq \frac{\alpha}{2} \pm \frac{1}{2} \sqrt{\alpha^2 - 4 \mu_{i, j}}, \qquad i, j \in \{ 1, \ldots, d \}.
        \end{equation*}
        The choice of $\alpha$ giving the best convergence rate is
        \begin{equation*}
            \alpha^*
            \coloneqq \argmax_{\alpha > 0} \min_{\lambda \in \sigma(B_{A, Q, \alpha})} \Re(\lambda)
            = 2 \sqrt{\min_{i, j \in \{ 1, \ldots, d \}} \mu_{i, j}}
            = 2 \sqrt{2 \lambda_{\min}(A)}.
        \end{equation*}
        We used that $\Re(\lambda_{i, j}^{\pm}(Q, A, \alpha)) = \frac{\alpha}{2} \pm \frac{1}{2} \sqrt{\max(\alpha^2 - 4 \mu_{i, j}, 0)}$ is monotonically increasing in $\mu_{i, j}$, and that $\lambda_{i, j}^{+} \ge \lambda_{i, j}^{-}$.
        For this choice of $\alpha = \alpha^*$, we obtain the \enquote{condition number}        
        \begin{equation} \label{eq:tildekappaB}
            \tilde{\kappa}(B_{A, Q, \alpha^*})
            \coloneqq \frac{\max\limits_{\lambda \in \sigma(B_{A, Q, \alpha^*})} | \lambda |}{\min\limits_{\lambda \in \sigma(B_{A, Q, \alpha^*})} | \lambda |}
            = \sqrt{\frac{\max\limits_{1 \le i, j \le d} \mu_{i, j}}{2 \lambda_{\min}(A)}}.
        \end{equation}
        Here, we used that $(\alpha^*)^2 \le 4 \mu_{i, j}$ for all $i, j \in \{ 1, \ldots, d \}$ and that
        \begin{equation*}
            | \lambda_{i, j}^{\pm}(Q, A, \alpha) |
            = \begin{cases}
                \lambda_{i, j}^{\pm}(Q, A, \alpha), & \text{if } \alpha^2 \ge 4 \mu_{i, j}, \\
                \mu_{i, j}, & \text{else.}
            \end{cases}
        \end{equation*}
        The expression \eqref{eq:tildekappaB} is 1-homogeneous in $A$, so $\min_{A > 0} \tilde{\kappa}(B_{A, Q, \alpha^*}) = \kappa(Q) + \frac{1}{\kappa(Q)}$ is achieved for $A^* = \theta \id_{d \times d}$ for any $\theta > 0$.
        This then yields
        \begin{equation*}
            \alpha^*
            = \sqrt{8 \theta}
            \qquad \text{and} \qquad
            \tilde{\kappa}(B_{A^*, Q, \alpha^*})
            = \sqrt{\frac{1}{2}\left(\kappa(Q) + \frac{1}{\kappa(Q)}\right)},
        \end{equation*}
        and so $\tilde{\kappa}(B_{A^*, Q, \alpha^*})$ is minimal (that is, equal to one) if and only if $\kappa(Q)$ is minimal (that is, equal to one).

        The stepwise contraction factor when using the optimal step size
        \begin{equation*}
            h^*
            = \frac{2}{\max\limits_{\lambda \in \sigma(B_{A, Q, \alpha^*})} | \lambda | + \min\limits_{\lambda \in \sigma(B_{A, Q, \alpha^*})} | \lambda |}   
            = \frac{2}{\sqrt{\max\limits_{1 \le i, j \le d} \mu_{i, j}} + \sqrt{2 \lambda_{\min}(A)}}
        \end{equation*}
        in the explicit Euler scheme is
        \begin{equation*}
            \rho^*
            \coloneqq \frac{\tilde{\kappa}(B_{A^*, Q, \alpha^*}) - 1}{\tilde{\kappa}(B_{A^*, Q, \alpha^*}) + 1}
            = \frac{\sqrt{\frac{1}{2}\left(\kappa(Q) + \frac{1}{\kappa(Q)}\right)} - 1}{\sqrt{\frac{1}{2}\left(\kappa(Q) + \frac{1}{\kappa(Q)}\right)} + 1}
            = \frac{\sqrt{\kappa(Q)^2 + 1} - \sqrt{2 \kappa(Q)}}{\sqrt{\kappa(Q)^2 + 1} + \sqrt{2 \kappa(Q)}}
            < 1.
        \end{equation*}
        Note that for all $x > 1$ we have $\sqrt{x} > \sqrt{\frac{1}{2}\left(x + \frac{1}{x}\right)}$ and thus $\rho^* < \frac{\sqrt{\kappa(Q)} - 1}{\sqrt{\kappa(Q)} + 1}$.
\end{proof}

\end{appendices}

\end{document}